\pgfplotsset{compat=1.17}
\renewcommand*\env@matrix[1][\arraystretch]{%
  \edef\arraystretch{#1}%
  \hskip -\arraycolsep
  \let\@ifnextchar\new@ifnextchar
  \array{*\c@MaxMatrixCols c}}
\newcommand{\qedsymbol}{$\blacksquare$}
\newcommand{\morder}{{M_\text{O}}}
\newcommand{\morderth}{{M_\text{O}^{\text{th}}}}
\newcommand{\mom}{{\mathsf M}}
\newcommand{\tmom}{\widetilde{\mathsf M}}
\newcommand{\meq}{{M_\text{eqn}}}
\newcommand{\mdeg}{{M_\text{deg}}}
\newcommand{\melems}{{M_\text{elem}}}
\newcommand{\mpred}{{M_\text{P}}}
\newcommand{\mcorr}{{M_\text{C}}}
\newcommand{\half}{\frac{1}{2}}
\newcommand{\Tm}{{\mathcal T}}
\newcommand{\lampm}{{\lambda_{i\pm\frac{1}{2}}}}
\newcommand{\lmax}{{\lambda_{\max}}}
\newcommand{\lph}{{\lambda_{i+\frac{1}{2}}}}
\newcommand{\lmh}{{\lambda_{i-\frac{1}{2}}}}
\newcommand{\NF}{{\mathcal F}}
\newcommand{\bunderline}[1]{\underline{#1}}
\renewcommand{\vec}[1]{{\bunderline{#1}}}
\newcommand{\mat}[1]{{\bunderline{\bunderline{#1}}}}
\newcommand{\grad}{\vec{\nabla}}
\newcommand{\reals}{\mathbb R}
\newcommand{\nats}{{\mathbb N}_0}
\newcommand{\overbar}[1]{\mkern 1.5mu\overline{\mkern-1.5mu#1\mkern-1.5mu}\mkern 1.5mu}
\def\heat{h}
\def\rvec{{\mathcal R}}
\def\ve{\varepsilon}
\renewcommand*\env@matrix[1][\arraystretch]{%
  \edef\arraystretch{#1}%
  \hskip -\arraycolsep
  \let\@ifnextchar\new@ifnextchar
  \array{*\c@MaxMatrixCols c}}
\journalname{}
\begin{document}

\title{Positivity-Preserving Lax-Wendroff Discontinuous Galerkin Schemes for Quadrature-Based Moment-Closure Approximations of Kinetic Models\thanks{This research was partially funded by NSF Grants DMS--1620128
and DMS--2012699.}
}

\titlerunning{LxW-DG schemes for quadrature-based moment-closures}        

\author{Erica R. Johnson         \and
        James A. Rossmanith\footnote{Corresponding author} \and
        Christine Vaughan
}

\authorrunning{E.R. Johnson, J.A. Rossmanith, and C. Vaughan} 

\institute{Erica R. Johnson \at
              Southwest Research Institute, Space Science Department,  \\
              6220 Culebra Road, San Antonio, Texas 78238, USA \\
              \email{\href{mailto:erica.johnson@swri.edu}{erica.johnson@swri.edu}}           
           \and
           James A. Rossmanith \at
              Iowa State University, Department of
	  Mathematics, \\ 411 Morrill Road, Ames, Iowa 50011, USA \\
	  \email{\href{mailto:rossmani@iastate.edu}{rossmani@iastate.edu}}
	  \and
	  Christine Vaughan \at
	  Physicians Health Plan \\ 1400 East Michigan Ave, Lansing, Michigan 48912, USA \\
	  \email{\href{mailto:cvaughan880@gmail.com}{cvaughan880@gmail.com}}
}

\date{}

\maketitle

\begin{abstract}
The quadrature-based method of moments (QMOM) offers a promising class of approximation techniques for reducing kinetic equations to fluid equations that are valid beyond thermodynamic equilibrium. In this work, we study a particular five-moment variant of QMOM known as HyQMOM and establish that this system is moment-invertible over a convex region in solution space. We then develop a high-order discontinuous Galerkin (DG) scheme for solving the resulting fluid system. The scheme is based on a predictor-corrector approach, where the prediction is a localized space-time DG scheme. The nonlinear algebraic system in this prediction is solved using a Picard iteration. The correction is a straightforward explicit update based on the time-integral of the evolution equation, where the space-time prediction replaces all instances of the exact solution. In the absence of limiters, the high-order scheme does not guarantee that solutions remain in the convex set over which HyQMOM is moment-realizable. To overcome this, we introduce novel limiters that rigorously guarantee that the computed solution does not leave the convex set of realizable solutions, thus guaranteeing the hyperbolicity of the system. We develop positivity-preserving limiters in both the prediction and correction steps and an oscillation limiter that damps unphysical oscillations near shocks. We also develop a novel extension of this scheme to include a BGK collision operator; the proposed method is shown to be asymptotic-preserving in the high-collision limit. The HyQMOM and the HyQMOM-BGK solvers are verified on several test cases, demonstrating high-order accuracy on smooth problems and shock-capturing capability on problems with shocks. The asymptotic-preserving property of the HyQMOM-BGK solver is also numerically verified.

\keywords{discontinuous Galerkin \and hyperbolic conservation laws \and moment closure \and positivity-preserving limiters}
 \subclass{65M60 \and 82C80}
\end{abstract}

\tableofcontents

\section{Introduction}
\label{sec:intro}

Kinetic Boltzmann equations model the non-equilibrium dynamics of a wide variety of fluids, including gases, multiphase flows, and plasma.
These equations have the following general form:
\begin{equation}
\label{eqn:boltzmann_full_3D3V}
f_{,t} +\vec{v}\cdot \grad_{\vec{x}} f +
\vec{\mathcal{F}} \cdot \grad_{\vec{v}} f  
=\mathbb{C}(f),
\end{equation}
where $f\left(t,\vec{x},\vec{v}\right): \reals_{\ge 0} \times
\reals^D \times \reals^V  \mapsto \reals_{\ge 0}$ is the distribution function that describes the state of the fluid, $t \in \reals_{\ge 0}$ is time, $\vec{x} \in \reals^D$ is the spatial coordinate, and $\vec{v}\in \reals^V$ is the velocity coordinate.
Additionally, $\vec{\mathcal{F}}\left(t,\vec{x},\vec{v}\right) 
 \in \reals_{\ge 0} \times \reals^D \times \reals^V \mapsto
 \reals^V$ is the forcing term that could include lift, drag, gravity, and other forces acting on the particles, and
 $\mathbb{C}(f): \reals_{\ge 0}\mapsto \reals$ is the collision term that describes direct particle-particle interactions.

Kinetic models of the form \eqref{eqn:boltzmann_full_3D3V}  offer two desirable
features: (1) the evolution equations have a relatively simple form (i.e., 
advection in phase space); and (2) the models are capable of accurately
describing a large class of physical phenomena that are important
in application problems. However, the main difficulty with kinetic models is that
their solutions live in high-dimensional phase space, which means that high
fidelity numerical computations are very expensive.

\subsection{Fluid models and the moment-closure problem}
One approach for reducing the computational complexity of kinetic models
is to replace them with so-called {\it fluid models}, which means that instead of
evolving the distribution function directly, one evolves a finite set of {\it moments} of the distribution function.
For example the $(\ell_1, \ell_2, \ell_3)$ moment of the distribution function, $f$, is defined as follows:
\begin{equation}
\mom_{(\ell_1, \ell_2, \ell_3)} := \int_{\reals^3} v_1^{\ell_1} v_2^{\ell_2} v_3^{\ell_3} f \left(t,\vec{x},\vec{v} \right)
\, dv_1 \, dv_2 \, dv_3,
\end{equation}
where $\vec{v} = (v_1, v_2, v_3)$. If moments of the kinetic equation \eqref{eqn:boltzmann_full_3D3V} are computed, we arrive at
three-dimensional evolution equations of the following form:
\begin{equation}
\label{eqn:mom3d}
\mom_{(\ell_1, \ell_2, \ell_3),t} + \mom_{(\ell_1+1, \ell_2, \ell_3),x_1}
+ \mom_{(\ell_1, \ell_2+1, \ell_3),x_2}+ \mom_{(\ell_1, \ell_2, \ell_3+1),x_3} = 0,
\end{equation}
where for simplicity, we have set the forcing and
collision operator to zero: $\vec{\mathcal{F}} \equiv \vec{0}$ and $\mathbb{C} \equiv 0$.

The key benefit of considering a finite set of evolution equations of the form \eqref{eqn:mom3d} over the fully kinetic
equation \eqref{eqn:boltzmann_full_3D3V} is the reduction in the number of independent variables from $1+D+V \le 7$
to $1+D \le 4$. However, we observe from \eqref{eqn:mom3d} the key challenge in fluid model approximations of the kinetic equation: to evolve the moment $\mom_{(\ell_1, \ell_2, \ell_3)}$, we need to know 
higher-order moments: $\mom_{(\ell_1+1, \ell_2, \ell_3)}$, $\mom_{(\ell_1, \ell_2+1, \ell_3)}$, and
$\mom_{(\ell_1+1, \ell_2, \ell_3+1)}$. 
This issue is known simply as the {\it moment-closure problem}. And in particular, to obtain a closed fluid system, 
one needs to somehow approximate the highest moments in the system. Furthermore, different choices lead to systems
of differential (or integro-differential) equations with vastly different mathematical properties.

\subsection{Moment-closure methods}
A standard approach for developing moment-closure approximation for \eqref{eqn:boltzmann_full_3D3V} is to assume
a specific ansatz for the distribution function:
\begin{equation}
f(t,\vec{x},\vec{v}) \approx \sum_{\ell=1}^M \psi_{\ell}\left( \vec{v}, \beta_{\ell}\left(t,\vec{x}\right) \right) 
\quad \text{or} \quad
f(t,\vec{x},\vec{v}) \approx \prod_{\ell=1}^M \psi_{\ell} \left( \vec{v}, \beta_{\ell}\left(t,\vec{x}\right) \right).
\end{equation}
The first systematic attempt at developing a moment-closure approach is due to the seminal
work of Grad \cite{article:grad49}, in which he proposed a moment-closure 
that assumed the distribution was a Maxwell-Boltzmann distribution multiplied by a polynomial in $\vec{v}$.
Since Grad's work, a vast body of literature has developed on various moment closures, including
modern modifications of Grad's closure (e.g., \cite{article:CaFaLi13,article:CaFaLi14,article:Koellermeier2021,article:Koellermeier2014,article:KoTo17}),
maximum entropy  \cite{article:Dreyer87,article:Lev96,article:MuRu93} and its numerous variants 
(e.g., \cite{article:AbdBru2016,article:BohmTorr2019}), and quadrature-based moment closures
(e.g., \cite{article:desjardins2008,article:Fox08,article:fox09,article:FoxLauVie2018,article:marchisio2005,article:PaDeFo2019}).
A full review of all methods is well beyond the scope of the current
work, but can be found in the paper of Torrilhon \cite{article:Torrilhon2009} and the references therein.

\subsection{Scope of this work}
In this work, we study a particular five-moment moment-closure known as the hyperbolic quadrature-based moment closure (HyQMOM), originally due to Fox, Laurent, and Vi\'e \cite{article:FoxLauVie2018}. Our focus here is only on the 
one-dimensional version of kinetic equation \eqref{eqn:boltzmann_full_3D3V} (i.e., 1D1V). We begin in \S\ref{sec:mom_realize} with a brief review of the moment-closure problem and a few strategies for producing fluid approximations with desirable mathematical properties.
In \S\ref{sec:hyqmom} we provide a brief review of the classical quadrature-based moment closure (QMOM), show its shortcomings, and then establish that the HyQMOM system is moment-invertible over a convex set in solution space. 
In \S\ref{sec:numerics} we introduce a novel high-order Lax-Wendroff discontinuous Galerkin scheme for solving the HyQMOM fluid system. The scheme is based on a predictor-corrector approach, where the prediction step is based on a localized space-time discontinuous Galerkin scheme. The nonlinear algebraic system that arises in this prediction step is solved using a Picard iteration. The correction is a straightforward explicit update based on the time-integral of the evolution equation, where the space-time prediction replaces all instances of the exact solution. In the absence of additional limiters, the proposed high-order scheme does not guarantee that the numerical solution remains in the convex set over which HyQMOM is moment-realizable. To overcome this challenge, we introduce in \S\ref{sec:limiters} novel limiters that rigorously guarantee that the computed solution does not leave the convex set over which moment-invertible and hyperbolicity of the fluid system is guaranteed. We develop positivity-preserving limiters in both the prediction and correction steps and an oscillation limiter that damps unphysical oscillations near shocks. In \S\ref{sec:examples} we perform convergence tests to verify the order of accuracy of the scheme and test the scheme on Riemann data to demonstrate the shock-capturing and robustness of the method. In \S\ref{sec:numerics_bgk} we develop an asymptotic-preserving \cite{article:Jin12} extension of the proposed scheme that allows us to solve a five-moment fluid model with a Bhatnagar-Gross-Krook (BGK) \cite{article:BGK54} collision operator. Finally, in \S\ref{sec:examples_bgk} we perform convergence tests to verify the order of accuracy of the scheme and verify the method on Riemann data with different Knudsen numbers. Conclusions are provided in \S\ref{sec:conclusions}.

\section{Brief review of moment-realizability}
\label{sec:mom_realize}

For the remainder of the present work, we focus exclusively on the one-dimensional
version of \eqref{eqn:boltzmann_full_3D3V}. In particular, in this section, 
we focus on the transport portion of the kinetic equation, and thus
restrict ourselves to the 1D1V collisionless Boltzmann equation (aka Vlasov equation):
\begin{equation}
  \label{eqn:Boltzmann1d}
   f_{,t} + v f_{,x} = 0,
\end{equation}
where $f(t,x,v):\reals_{\ge 0} \times \reals \times \reals \mapsto \reals_{\ge 0}$ is the probability distribution function
that describes the state of the fluid. 

The moments of $f$ are defined as follows:
\begin{equation}
    \mom_{\ell} := \int_{\reals}  v^{\ell} \, f \, dv, \quad \text{for}\quad
    \ell \in \nats.
\end{equation}
A simple calculation reveals that
 for each $\ell \in \nats$, the moments satisfy the following equation:
\begin{equation}
   \mom_{\ell,t} + \mom_{\ell+1 ,x} = 0.
\end{equation}
The key difficulty is that the evolution of the $\ell^{\text{th}}$ moment
depends on the $( \ell + 1 )^{\text{st}}$ moment, meaning that 
the moment expansion does not produce a closed system.
Therefore, the key challenge for developing fluid approximations of kinetic
models is the following question: How does one close the
moment hierarchy?

\begin{definition}[Univariate moment-closure problem]
{\it
Let $S \in \nats$. Given only the first $S+1$ moments of a univariate distribution function $f(v)$: 
\begin{equation}
\mom_{\ell} = \int_{-\infty}^{\infty} v^{\ell} f(v) \, dv,
\quad \text{for} \quad \ell=0,1,\ldots,S, 
\end{equation}
find an approximation of the next moment, $\mom_{S+1}$, 
in terms of the given moments.
}
\end{definition}

The basic strategy in most moment-closure approaches is as follows:
(1) start with a finite set of moments (e.g., $\ell=0,1,\ldots,S$); 
(2) assume a form of the distribution function with several free parameters (typically, the number of free parameters is the same as the number
of moments that will be tracked); (3) determine the free parameters in the assumed
distribution function so that its moments match all of the known moments (this
part is called {\it moment-inversion}); and (4)
compute the next moment of the assumed distribution function, which
is then used to provide the flux in the evolution equation for $\mom_{S}$:
\begin{equation}
	\mom_{S,t} + \overbar{\mom}_{S+1,x} = 0,
\end{equation}
where $\overbar{\mom}_{S+1} = \overbar{\mom}_{S+1} \left( \mom_{0}, \mom_{1}, \ldots, \mom_{S} \right)$.

\subsection{Existence: Moment-realizability}
Before considering specific strategies for approximating the missing moment, $\mom_{S+1}$, it is worthwhile to discuss the general existence problem first. We begin by defining some important quantities relevant throughout this work, namely the mass density, macroscopic velocity, pressure, heat flux, and modified kurtosis:
\begin{equation}
\label{eqn:prim_vars_1}
\begin{gathered}
\rho := \int_{-\infty}^{\infty} f \, dv, \qquad
u := \frac{1}{\rho} \int_{-\infty}^{\infty} v f \, dv, \qquad
p := \int_{-\infty}^{\infty} (v-u)^2 f \, dv, \\
\heat :=  \int_{-\infty}^{\infty} (v-u)^3 f \, dv, \qquad \text{and} \qquad
k:= \int_{-\infty}^{\infty} (v-u)^4 f \, dv - \left( \frac{p^3+\rho q^2}{\rho \, p} \right),
\end{gathered}
\end{equation}
where we assume that $\rho, p > 0$. These {\it primitive variables} are directly linked to the moments $\mom_{\ell}$:
\begin{equation}
\label{eqn:prim_vars_2}
\begin{gathered}
\rho = \mom_0, \qquad u = \frac{\mom_1}{\mom_0},
\qquad p = \mom_2 - \frac{\mom_1^2}{\mom_0}, \qquad
\heat = \mom_3 - \frac{3 \mom_1 \mom_2}{\mom_0} + \frac{2 \mom_1^3}{\mom_0^2}, \\
 \text{and} \qquad
k = \frac{\mom_2^3 - 2 \mom_1 \mom_2 \mom_3 + \mom_0 \mom_3^2 + \mom_1^2 \mom_4 - \mom_0 \mom_2 \mom_4}{\mom_1^2 - \mom_0 \mom_2}.
\end{gathered}
\end{equation}
Using these we define the {\it normalized velocity variable}, $s$, and the {\it normalized moments}:
$\tmom_{j}$:
\begin{equation}
s := \frac{v-u}{\sqrt{T}} \qquad \text{and} \qquad
\tmom_j :=  \frac{\sqrt{T}}{\rho} \int_{-\infty}^{\infty} s^j \, 
f\left( v(s) \right) \, ds =  \int_{-\infty}^{\infty} s^j \, 
\widetilde f\left( s \right) \, ds,
\end{equation}
where $T=p/\rho$ is the temperature.
The moments and the normalized moments are related as follows:
\begin{align}
{\mom}_{\ell} &= \rho \, \sum_{j=0}^{\ell} \begin{pmatrix} \ell \\ j \end{pmatrix}
 T^{\frac{j}{2}} \, u^{\ell-j} \,
\tmom_{j} \qquad \text{and} \qquad
\tmom_j = {\rho}^{-1} \, T^{-\frac{j}{2}}  \sum_{\ell=0}^j \begin{pmatrix} j \\ \ell \end{pmatrix} \left(-u \right)^{j-\ell}  \, {\mom}_{\ell}.
\end{align}
By construction, the normalized moments have the following property:
\begin{equation}
\tmom_0 = 1, \qquad \tmom_1 = 0, \qquad \text{and} \qquad
\tmom_2 = 1.
\end{equation}

\begin{definition}[Realizable moments]
{\it 
The following rescaled moments:
\[
  \tmom_0 = 1, \quad \tmom_1 = 0, \quad
\tmom_2 = 1, \quad \tmom_3, \quad \ldots, \quad \tmom_{S},
\]
where $S \in {\mathbb Z}_{\ge 3}$ and $\bigl|\tmom_j\bigr| < \infty$  $\forall j \in {\mathbb Z}_{\ge 0}$,
are called {\bf realizable} if there exists a probability density function, $\widetilde{f}(s): \reals \mapsto \reals_{\ge 0}$, such that
\[
  \tmom_{j} = \int_{-\infty}^{\infty} s^j \, \widetilde{f}(s) \, ds \qquad
  \text{for} \quad j=0,1,\ldots,S.
\]
}
\end{definition}

This leads to an obvious question: for a given $S \in {\mathbb Z}_{\ge 3}$, under what conditions is the set of moments, $\{ \tmom_0=1, \, \tmom_1=0, \, \tmom_2=1, \, \tmom_3,
\ldots, \, \tmom_{S} \}$, realizable? This question is the celebrated {\bf truncated Hamburger moment problem} (e.g., see Chapter 9 of 
\cite{book:schmudgen2017}) for which we can state the following result. Note that we state only the case where $S$ is even, although a similar result also exists when $S$ is odd \cite{book:schmudgen2017}.

\begin{theorem}[Truncated Hamburger moment problem (adapted from
Theorem 9.27 of \cite{book:schmudgen2017})]
Let $S \in {\mathbb Z}_{>3}$ be an even integer. The set of moments:
\[
\{ \tmom_0=1, \, \tmom_1=0, \, \tmom_2=1, \, \tmom_3,
\ldots, \, \tmom_{S} \}
\]
is realizable if all of the Hankel determinants for $m=0,1,\ldots,S/2$
are positive:
\[
	D_{m} := \begin{vmatrix}
	1 & 0         & 1         & \tmom_{3} & \cdots & \tmom_{m}\\
	0 & 1         & \tmom_{3} & \tmom_{4} & \cdots & \tmom_{m+1}\\
    1 & \tmom_{3} & \tmom_{4} & \tmom_{5} & \cdots & \tmom_{m+2} \\
    \tmom_{3} & \tmom_{4} & \tmom_{5} & \tmom_{6} & \cdots & \tmom_{m+3}\\
    \vdots & \vdots & \vdots & \vdots &  & \vdots \\
    \tmom_m & \tmom_{m+1} &  \tmom_{m+2} & \tmom_{m+3} & \cdots & \tmom_{2m}
    \end{vmatrix} > 0.
\]
\end{theorem}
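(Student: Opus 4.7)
My plan is to prove realizability by explicitly constructing a Gaussian-quadrature representing measure, following the classical recipe for the truncated Hamburger moment problem. First, I would introduce the Riesz linear functional $L$ on $\reals[s]_{\le S}$ defined by $L(s^k) = \tmom_k$. For any polynomial $q(s) = \sum_{i=0}^{S/2} c_i s^i$ of degree at most $S/2$, a direct expansion gives $L(q^2) = \vec{c}^T H \vec{c}$, where $H$ is the $(S/2+1)\times(S/2+1)$ Hankel matrix with entries $H_{ij} = \tmom_{i+j}$. The leading principal minors of $H$ are exactly $D_0, D_1, \ldots, D_{S/2}$, so Sylvester's criterion combined with the hypothesis yields that $H$ is positive definite, making $\langle p,q\rangle := L(pq)$ a genuine inner product on $\reals[s]_{\le S/2}$.

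The second step is to apply Gram-Schmidt to the monomial basis $\{1, s, \ldots, s^{S/2}\}$ to produce monic orthogonal polynomials $P_0, P_1, \ldots, P_{S/2}$; positive definiteness guarantees the procedure does not break down. The standard three-term recurrence $s P_k(s) = P_{k+1}(s) + \beta_k P_k(s) + \gamma_k P_{k-1}(s)$ with $\gamma_k = L(P_k^2)/L(P_{k-1}^2) > 0$ determines all off-diagonal entries of a $(S/2+1)\times(S/2+1)$ symmetric tridiagonal Jacobi matrix $J$; the final diagonal entry $\beta_{S/2}$ is not pinned down by the truncated data (it encodes the unknown $\tmom_{S+1}$) and may be chosen arbitrarily. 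Diagonalizing $J = Q\,\mathrm{diag}(\xi_0, \ldots, \xi_{S/2})\,Q^T$ and setting $w_i := Q_{0,i}^2$ then gives the nodes and Christoffel weights of a Gaussian quadrature rule.

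Standard results for symmetric tridiagonal matrices with strictly positive subdiagonal yield that the eigenvalues $\xi_i$ are real and distinct and that the weights $w_i$ are strictly positive; classical Gaussian-quadrature theory then gives $\sum_{i=0}^{S/2} w_i \xi_i^j = \tmom_j$ for all $j = 0, 1, \ldots, S$, independent of the arbitrary choice of $\beta_{S/2}$. The atomic density $\widetilde{f}(s)\,ds := \sum_{i=0}^{S/2} w_i\,\delta(s - \xi_i)\,ds$ is therefore a probability density realizing the prescribed moments. The main technical obstacle is proving the spectral and exactness statements just used, which ultimately trace back to the positive definiteness of $H$; since these are precisely the classical lemmas underlying Theorem 9.27 of \cite{book:schmudgen2017}, I would cite them rather than reproduce them in detail.
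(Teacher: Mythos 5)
Your construction is correct, but note that the paper does not prove this statement at all: it is quoted as an adapted citation of Theorem~9.27 of \cite{book:schmudgen2017}, so there is no in-paper argument to compare against. What you supply is the standard existence proof for the truncated Hamburger problem: Sylvester's criterion turns the positivity of $D_0,\ldots,D_{S/2}$ into positive definiteness of the Hankel form, which legitimizes Gram--Schmidt, the three-term recurrence, and the Jacobi-matrix diagonalization, and the resulting $(S/2+1)$-point atomic measure realizes the data. Two fine points are worth tightening. First, be careful not to invoke the usual ``$(n+1)$-point Gauss rules are exact to degree $2n+1$'' statement verbatim: because the last diagonal entry $\beta_{S/2}$ is arbitrary, the node polynomial $(s-\beta_{S/2})P_{S/2}-\gamma_{S/2}P_{S/2-1}$ is only quasi-orthogonal (orthogonal to degrees $\le S/2-1$), so the rule is guaranteed exact only to degree $2\cdot(S/2)=S$ --- which is exactly what you need, and is why the answer is independent of $\beta_{S/2}$. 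Second, weight positivity has a one-line proof that avoids the Golub--Welsch machinery: $w_i = L(\ell_i^2) > 0$ where $\ell_i$ is the degree-$S/2$ Lagrange basis polynomial at the nodes, using positive definiteness of $H$. It is also worth observing that your representing measure is precisely a QMOM-type sum of Dirac deltas (for $S=4$, three deltas with one free parameter), which ties this classical result directly to the closure ansatz \eqref{eqn:HyQMOM_dist} studied in the paper.
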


\subsection{Example: $S=4$ case}
As an example, which will become relevant later in this work, consider the case $S=4$,
where the three relevant Hankel determinants are 
\begin{equation}
D_0 = 1, \quad
D_1 = 1, \quad
D_2 = \begin{vmatrix} 1 & 0 & 1 \\
0 & 1 & \tmom_{3}  \\
1 & \tmom_{3} & \tmom_{4}
\end{vmatrix} = 
\tmom_{4} - \tmom_{3}^2 - 1 > 0.
\end{equation}
Thus, the realizability condition in the univariate $S=4$ case is
\begin{equation}
\label{eqn:realizability}
\tmom_{4} > \tmom_{3}^2 + 1,
\end{equation}
which is depicted in Figure \ref{fig:realizability}. Also shown in this figure is the location of the distribution in thermodynamic equilibrium (i.e., the Maxwellian distribution):
\begin{equation}
f(v) = \frac{\rho}{\sqrt{2\pi T}} e^{-\frac{(v-u)^2}{2T}} \quad \Longrightarrow \quad
\widetilde{f}(s) = \frac{1}{\sqrt{2\pi}} e^{-\frac{s^2}{2}} \quad \Longrightarrow \quad
\begin{cases} \tmom_{3} = 0 \\
\tmom_{4} = 3
\end{cases}.
\end{equation}
Finally, we note that the realizability condition \eqref{eqn:realizability} in terms of the primitive variables can be written as follows:
\begin{equation}
\label{eqn:realizability_prim}
\frac{\rho \, k}{p^2} > 0,
\end{equation}
which is satisfied if $\rho>0$, $p>0$, and $k>0$.

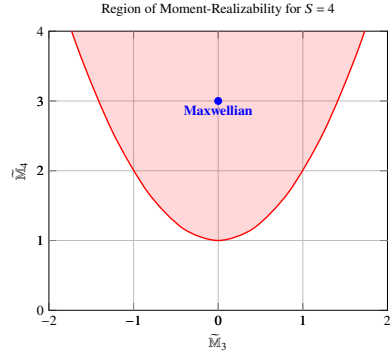
\begin{figure}
\begin{center}
\begin{tikzpicture}[scale=0.65]
  \begin{axis}[ xmin=-2, xmax=2, ymin=0, ymax=4, extra x ticks={-1,0,1}, extra y ticks={1,2,3}, extra tick style={grid=major}, xlabel=$\widetilde{\mathbb M}_3$,
  ylabel=$\widetilde{\mathbb M}_4$, title={Region of Moment-Realizability for $S=4$}]
    \addplot [name path=f, fill=gray, fill opacity=0.15, ultra thick, mark=none, smooth, thick, color=red] { 1+x^2 };
    \addplot [only marks, blue] coordinates { (0,3) } node[below] {{\bf Maxwellian}};
  \end{axis}  
  \end{tikzpicture}
\end{center}
\caption{Region of the moment-realizability when $S=4$. In other words, given the five rescaled moments: $\widetilde{\mathbb M}_0=1$, $\widetilde{\mathbb M}_1=0$, $\widetilde{\mathbb M}_2=1$, $\widetilde{\mathbb M}_3$, and $\widetilde{\mathbb M}_4$, 
there exists a positive distribution function matching the given moments provided
that these moments lie within the shaded pink region of the above graph. Thermodynamic equilibrium (i.e., the Maxwellian distribution) occurs at 
$\widetilde{\mathbb M}_3=0$ and $\widetilde{\mathbb M}_4=3$, which  is in the interior of the shaded pink region.\label{fig:realizability}}
\end{figure}

\subsection{Techniques for moment-closure}
There are many different moment closure methods for approximating the final moment, $\overbar{\mom}_{S+1}$, and each method has its own merits and challenges. A full review of all methods is well beyond the scope of the current
work, but can be found in the paper of Torrilhon \cite{article:Torrilhon2009} and the references therein. 
In this work, we settle for a brief summary of three broad classes of the most commonly used closures. 


\begin{description}
\item[{\bf Grad closure:}] This approach was originally developed
 by Grad \cite{article:grad49} in 1949, but 
 variants with improved hyperbolicity properties have been introduced more recently \cite{article:CaFaLi13,article:CaFaLi14,article:Koellermeier2021,article:Koellermeier2014,article:KoTo17}. The basic idea is that the distribution  function is approximated as a Maxwellian multiplied by a polynomial in $v$:
\begin{equation}
   \widetilde{f}(s) \sim  \frac{e^{-\frac{s^2}{2}}}{\sqrt{2\pi T}}  \left[ \rho +  
     \sum_{\ell=3}^{S} T^{-\frac{\ell}{2}} \beta_{\ell} \psi_{\ell}\left( s \right) \right],
\end{equation} 
where $\psi_{\ell}\left( s \right)$ is the Hermite polynomial of degree $\ell$ and $\vec{\beta}$ are coefficients chosen so that $\widetilde{f}(s)$ matches the first
$S+1$ input moments. A related approach widely used in applications is the R13 model, which regularizes the 13-moment Grad closure through additional terms from the Chapman-Enskog expansion; an excellent review of the R13 model can be found in Torrilhon \cite{article:Torrilhon2016}.

\medskip

\item [{\bf Maximum entropy closure:}] The maximum entropy closure \cite{article:Dreyer87,article:Lev96,article:MuRu93} and its numerous variants 
(e.g., see \cite{article:AbdBru2016,article:BohmTorr2019}) formulate the
moment-inversion problem as an optimization problem to maximize the entropy
under some assumed form of the distribution function. In the original formulation, the distribution function is approximated as an exponential of a polynomial:
\begin{equation}
   \widetilde{f}(s) \sim  e^{ \, \vec{\beta} \cdot \vec{{\Phi}}(s)} \qquad \text{where} \qquad \vec{\Phi}(s) = [1,s,s^2,\dots]^T,
\end{equation}
and the coefficients, $\vec{\beta}$, are chosen so that $\widetilde{f}(s)$ matches the first $S+1$ input moments.

\medskip

\item [{\bf Quadrature-based moment closures:}] In the quadrature-based method of moments (QMOM)  
 (e.g., \cite{article:desjardins2008,article:Fox08,article:fox09,article:FoxLauVie2018,article:marchisio2005,article:PaDeFo2019}),
  the distribution function is
represented as a sum of Dirac delta functions. This closure will be discussed in detail in \S\ref{sec:hyqmom}. 
\end{description}

\section{HyQMOM: Hyperbolic quadrature-based moment closure}
\label{sec:hyqmom}

In this section, we review the quadrature-based moment-closure (QMOM) approach and describe in detail the five-moment hyperbolic regularization of QMOM (HyQMOM), which is the main focus of the current work.

\subsection{Classical QMOM Approach}
The classical quadrature-based moment-closure (QMOM) approach is widely used in modeling multiphase flows; key developments in this methodology have been
developed over the course of the past several years, e.g., see \cite{article:ChKaMa12,article:desjardins2008,book:Fox2003,article:Fox08,article:fox09,article:marchisio2005,article:Vikas11}.

The key idea is to assume that the distribution is a sum of Dirac delta functions whose locations (abscissas) and strengths (weights) are free parameters:
\begin{align}
\label{eqn:delta}
  {f}(t,x,v) \approx f^{\star}(t,x,v) := \sum_{j=1}^N \omega_j \delta \left( v-\mu_j \right),
\end{align}
where $\delta(v)$ is the Dirac delta and the quadrature 
weights, $\omega_j$, and abscissas, $\mu_j$, are all functions
of $t$ and $x$. This approach is reminiscent of other discrete velocity models such as the Broadwell model \cite{article:broad64a,article:broad64b,article:platkowski88}; however, a key difference is that the discrete velocities, $\mu_\ell$, change with the solution.

The moment inversion problem requires us to find $(\omega_j, \mu_j)$ for $j=1,\ldots,N$ by matching the
first $2N$ moments of $f(t,x,v)$:
\begin{equation}
\label{eqn:qmom_missing_moment}
\mom_{\ell} = \sum_{j=1}^N \omega_j \mu_j^{\ell} \quad \text{for} \quad \ell=0,1,\ldots,2N-1.
\end{equation}
The closure then comes from taking the next moment as follows:
\begin{equation}
\mom^{\star}_{2N} = \sum_{j=1}^N \omega_j \mu_j^{2N}.
\end{equation}

\subsubsection{Example: $N=2$ case}
\label{subsec:qmomN2}
As a simple example, let us consider the $N=2$ case.
The first four moments of $f^{\star}$ in \eqref{eqn:delta} with $N=2$ are
\begin{equation}
\label{eqn:mom_delta1}
  \mom_{\ell} = \omega_1 \mu^\ell_1 + \omega_2 \mu^\ell_2
  \quad \text{for} \quad \ell=0,1,2,3.
\end{equation}
The moment inversion problem is then this: given $(\mom_0,\mom_1, \mom_2,
\mom_3)$, find the parameters $(\mu_1, \mu_2, \omega_1, \omega_2)$
such that \eqref{eqn:mom_delta1} is satisfied.

This inversion problem is equivalent to finding the quadrature points and weights
for the following weighted Gaussian quadrature rule:
\begin{equation}
   \int_{-\infty}^{\infty} g(v) \, f(v) \, dv \, \approx \,  {\omega}_1 \, g \left( {\mu}_1 \right) 
   	+ {\omega}_2 \, g \left( {\mu}_2 \right),
\end{equation}
where $f(v)$ is a probability density function with moments $(\mom_0,\mom_1, \mom_2,
\mom_3)$.
If we attempt to make this quadrature
rule exact with $g(v) = 1$, $v$, $v^2$,  and $v^3$, we again arrive at \eqref{eqn:mom_delta1}.

To find the correct Gaussian quadrature rule, we invoke results from classical numerical analysis and look for polynomials of degree up to two that are orthogonal in the weighted $L^2(-\infty,\infty)$ inner product: $\langle \cdot , \cdot \rangle_{f}$.
Such polynomials are easily obtained via Gram-Schmidt, and indeed the relevant
one here is the quadratic polynomial:
\begin{align}
\label{eqn:qmom_psi2}
   \psi_2(v) &= p^2 - p \rho \left(v - u\right)^2 + \rho \heat  \left(v-u\right),
\end{align}
where $\rho$, $u$, $p$, and $\heat$ are defined by 
\eqref{eqn:prim_vars_1}--\eqref{eqn:prim_vars_2}.
The abscissas are the two distinct real roots of ${\psi}_2(v)$
and the weights can easily be obtained  by enforcing \eqref{eqn:mom_delta1}:
\begin{align}
  {\mu}_1, \, {\mu}_2 = u + \frac{\heat}{2p} \mp \sqrt{\frac{p}{\rho} + \left(\frac{\heat}{2p}\right)^2}, \quad {\omega}_1, \, \omega_2 = \frac{\rho}{2} \left[ 1 \pm \frac{\left(\frac{\heat}{2p}\right)}{\sqrt{\frac{p}{\rho} + \left(\frac{\heat}{2p}\right)^2}} \right].
\end{align}

\subsubsection{Weak hyperbolicity and linear degeneracy of QMOM for all $N\ge 1$}

While the above-described process can be used for any $N\ge 1$, it turns out that the resulting 
fluid equations are always only {\it weakly hyperbolic}. Furthermore, all of the waves in the Riemann
problem solution are {\it linearly degenerate}. We state these facts in the form
of \cref{theorem:weakhyp} in \cref{sec:appendix}; for completeness, we also provide a full
proof of this theorem in \cref{sec:appendix}. The theorem results are well-known in the QMOM literature
(e.g., see Chalons et al. \cite{article:ChKaMa12}), although
previously, no theorem had been presented to rigorously show both the weak hyperbolicity and linear degeneracy of all the waves
for all $N\ge 1$.

\subsection{Pressure Regularized QMOM}
To overcome the weak hyperbolicity present in classical QMOM, Chalons, Fox, and Massot \cite{article:Chalons10} proposed to replace the delta function ansatz
\eqref{eqn:delta} with a multi-Gaussian ansatz of the form:
\begin{align}
\label{eqn:multiguassian}
  {f}(t,x,v) \approx f^{\star}(t,x,v) := \frac{1}{\sqrt{2\pi \sigma}} \sum_{j=1}^N  \omega_j \exp\left[-\frac{\left( v-\mu_j \right)^2}{2\sigma}\right],
\end{align}
where the free parameters are now the quadrature weights, $\omega_k$, 
the abscissas, $\mu_k$, and the additional parameter $\sigma$.
 A similar approach using B-splines was also considered by Cheng and Rossmanith \cite{article:ChRo13}. The additional parameter $\sigma$ allows this closure to match an additional moment (i.e., a total of $2N+1$ moments can now be matched), but more importantly, it provides a pressure regularization that restores strong hyperbolicity.
Unfortunately, this closure exhibits a singularity in the limit of thermodynamic
equilibrium, since in that limit, all the quadrature points collapse to the macroscopic velocity:
\begin{equation}
\sigma \rightarrow T, \qquad
\sum_{j=1}^N \omega_j = \rho, \qquad
 \text{and} \qquad
\mu_j \rightarrow u \quad \forall \, j \in \left[ 1, N \right].
\end{equation}
This type of singularity is also evident in other closures, most notably the {\it maximum entropy} closure \cite{article:Junk98}.

\subsection{HyQMOM: Density Regularized QMOM}
As an alternative to the above-described pressure regularized QMOM, Fox et al. \cite{article:FoxLauVie2018,article:PaDeFo2019} developed a density regularized 
version, which they refer to as HyQMOM (hyperbolic quadrature-based method of moments). This approach was also studied by Johnson \cite{thesis:Johnson2017} and Wiersma \cite{thesis:Wiersma2019}.
The five-moment HyQMOM system is the subject of the current work, and we briefly review it in this section.

The idea of HyQMOM is as follows: approximate the distribution as a sum of delta functions (as in classical QMOM), but place one (or more) of these delta functions at known locations.
This converts the quadrature rule from classical Gaussian quadrature to something akin to Gauss-Radau quadrature.
The version of this idea relevant to the current work
is the case of three delta functions:
\begin{equation}
\label{eqn:HyQMOM_dist}
f\approx f^\star=\omega_1\delta(v-\mu_1)+\omega_2\delta(v-u)+\omega_3\delta(v-\mu_3),
\end{equation}
where two delta distributions are at unknown locations $\mu_1,\mu_3$ and the last delta distribution is fixed at the velocity, $u$. Each of the distributions is weighted by $\omega_1,\omega_2,\omega_3$. This results in the following
moment-inversion problem:
\begin{equation}
\label{eqn:hyqmom_inversion_1}
\begin{split}
  \omega_1+\omega_3 &= \widetilde\rho, \\
  \omega_1\mu_1+\omega_3\mu_3 &= \widetilde\rho u, \\
  \omega_1\mu_1^2+\omega_3\mu_3^2 &= \widetilde\rho u^2+p, \\
 \omega_1\mu_1^3+\omega_3\mu_3^3 &= \widetilde\rho u^3 +3pu+\heat,
\end{split}
\end{equation}
and 
\begin{equation}
\label{eqn:hyqmom_inversion_2}
\widetilde\rho:= \rho - \omega_2 = 
 \omega_1\left(\frac{\mu_1}{u} \right)^4 +
 \omega_3\left(\frac{\mu_3}{u} \right)^4
- \frac{6 \rho p^2 u^2 + 4\rho p \heat u + p^3 + \rho \heat^2 + 
\rho p k}{\rho p u^4}.
\end{equation}

System \eqref{eqn:hyqmom_inversion_1} can be solved in the same way that the $N=2$ classical QMOM system was solved, namely by constructing a quadratic polynomial:
\begin{equation}
\label{eqn:hyqmom_psi2}
\psi_2(v) = p^2 - p \widetilde\rho \left(v - u\right)^2 + \heat \widetilde\rho
   \left(v-u\right),
\end{equation}
which is just \eqref{eqn:qmom_psi2} with $\rho \rightarrow \widetilde\rho$. The roots of \eqref{eqn:hyqmom_psi2} provide $\mu_1$ and $\mu_3$,
and the corresponding weights, $\omega_1$ and $\omega_3$, can easily
be computed from \eqref{eqn:hyqmom_inversion_1} once $\mu_1$ and $\mu_3$ are known:
\begin{align}
  {\mu}_1, \, {\mu}_3 = u + \frac{\heat}{2p} \mp \sqrt{\frac{p}{\widetilde{\rho}} + \left(\frac{\heat}{2p}\right)^2},  
   \quad {\omega}_1, \, \omega_3 = \frac{\widetilde\rho}{2} \left[ 
   1 \pm \frac{\left(\frac{\heat}{2p}\right)}{\sqrt{\frac{p}{\widetilde\rho} + \left(\frac{\heat}{2p}\right)^2}} \right].
\end{align}
Finally, we can obtain $\omega_2$ and fully solve the moment inversion problem
by inserting the above expressions for $\omega_1$,
$\omega_3$, $\mu_1$, and $\mu_3$ into \eqref{eqn:hyqmom_inversion_2}:
\begin{equation}
\widetilde\rho = \rho - \omega_2 = \frac{p^3}{p\left(k+\frac{p^2}{\rho}+\frac{\heat^2}{p}\right)-\heat^2}.
\end{equation}
Putting all of these results together yields:
\begin{equation}
\label{eqn:qmom_full_eqns}
\begin{split}
\mom_5^\star 
= \, 
\rho u^5+10pu^3+10\heat u^2+5ru + \frac{2 \heat r}{p} - \frac{\heat^3}{p^2},
 \quad \text{where} \quad
r := \frac{p^2}{\rho} + \frac{\heat^2}{p} + k.
\end{split}
\end{equation}
From this, we can now assemble the full fluid approximation implied by the
5-moment HyQMOM approximation \eqref{eqn:HyQMOM_dist}.
\begin{definition}[5-moment HyQMOM fluid approximation]
The 5-moment HyQMOM approximation can be written either in conservative or
primitive form:
\begin{equation}
\label{eqn:HyQMOM_sys1}
\vec{q}_{,t}+\vec{f}\left(\vec{q}\right)_{,x}= \, \vec{0} \, 
\qquad \text{or} \qquad
\vec{\alpha}_{,t}+\mat{B}\left(\vec{\alpha}\right) \, 
\vec{\alpha}_{,x}= \, \vec{0} \, ,
\end{equation}
respectively, where
\begin{equation}
\label{eqn:HyQMOM_sys2}
\vec{q} =
\begin{bmatrix}
\rho \\ \rho u \\ \rho u^2+p \\ \rho u^3 + 3pu+\heat \\ \rho u^4+6pu^2+4\heat u + r
\end{bmatrix}, \quad
\vec{f}\left(\vec{q}\right) = \begin{bmatrix}
\rho u \\ \rho u^2+p \\ \rho u^3 + 3pu+\heat \\ \rho u^4+6pu^2+4 \heat u +r
\\ \mom^\star_5
\end{bmatrix},
\end{equation}
where $r$ and $\mom^{\star}_5$ are defined by \eqref{eqn:qmom_full_eqns},
and
\begin{equation}
\label{eqn:HyQMOM_prim}
\vec\alpha = 
\begin{bmatrix}[1.2]
\rho\\ u\\ p\\ \heat \\ k
\end{bmatrix}, \quad
\mat{B}\left(\vec{\alpha}\right) = 
\begin{bmatrix}[1.2]
 u   & \color{white}{-} &  \rho   & \color{white}{-} &  0                & \color{white}{-} &  0   & \color{white}{-} &  0 \\
 0   & \color{white}{-} &  u      & \color{white}{-} &  \frac{1}{\rho}   & \color{white}{-} &  0   & \color{white}{-} &  0 \\
 0   & \color{white}{-} &  3p     & \color{white}{-} &  u                & \color{white}{-} &  1   & \color{white}{-} &  0 \\
 -\frac{p^2}{\rho^2} & \color{white}{-} & 4\heat & \color{white}{-} & -\frac{\heat^2}{p^2}-\frac{p}{\rho} & \color{white}{-} & u + \frac{2\heat}{p} & \color{white}{-} & 1  \\
 0 & \color{white}{-} & 5k & \color{white}{-} & -\frac{2 k \heat}{p^2} & \color{white}{-} & \frac{2k}{p} & \color{white}{-} &  u
\end{bmatrix}.
\end{equation}
Furthermore, we note that that conservative flux Jacobian has the following form:
\begin{equation}
\label{eqn:cons_flux_jac}
\mat{A}\left(\vec{q}\right) := \vec{f}\left(\vec{q}\right)_{,\vec{q}} =
\begin{bmatrix}[1.2]
 0   & \color{white}{-} &  1   & \color{white}{-} &  0                & \color{white}{-} &  0   & \color{white}{-} &  0 \\
 0   & \color{white}{-} &  0      & \color{white}{-} &  1   & \color{white}{-} &  0   & \color{white}{-} &  0 \\
 0   & \color{white}{-} &  0     & \color{white}{-} & 0                & \color{white}{-} &  1   & \color{white}{-} &  0 \\
0 & \color{white}{-} & 0 & \color{white}{-} & 0 & \color{white}{-} & 0 & \color{white}{-} & 1  \\
 \frac{\partial\mom^\star_5}{\partial \mom_0} & \color{white}{-} & \frac{\partial\mom^\star_5}{\partial \mom_1} & \color{white}{-} & \frac{\partial\mom^\star_5}{\partial \mom_2} & \color{white}{-} & \frac{\partial\mom^\star_5}{\partial \mom_3} & \color{white}{-} &  \frac{\partial\mom^\star_5}{\partial \mom_4}
\end{bmatrix},
\end{equation}
where the details of the last row have been omitted for brevity.
The matrices $\mat{A}$ and $\mat{B}$ as defined in \eqref{eqn:cons_flux_jac}
and \eqref{eqn:HyQMOM_prim}, respectively, 
are {\it similar matrices}, meaning that they share
the same eigenvalues.
\end{definition}

\begin{proposition}[Hyperbolicity of HyQMOM]
The HyQMOM fluid model as expressed either in conservative or primitive form 
\eqref{eqn:HyQMOM_sys1}--\eqref{eqn:HyQMOM_prim}
 is strictly hyperbolic for all $\rho$, $p$, $k$ $>0$. 
 Note that $\rho$, $p$, $k$ $>0$ defines a convex set in the solution space: $\vec{q} \in \reals^5$.
The resulting wave structure includes one linearly degenerate wave and four nonlinear waves. 
\end{proposition}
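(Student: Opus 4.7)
The statement combines three assertions: convexity of the realizability set $\mathcal{R} := \{\vec{q} \in \reals^5 : \rho, p, k > 0\}$, strict hyperbolicity of $\mat{B}(\vec{\alpha})$ on $\mathcal{R}$, and the wave classification with one linearly degenerate field and four genuinely nonlinear ones. The plan is to address these in order.

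For convexity, the key observation is that $\rho$, $p$, and $k$ coincide with the three leading Hankel determinants $D_0, D_1, D_2$ up to positive factors: from \eqref{eqn:prim_vars_2} one computes $D_0 = \rho$, $D_1 = \rho p$, and $D_2 = \rho p k$, so $\mathcal{R} = \{\vec{q} : D_0, D_1, D_2 > 0\}$. Positivity of all three Hankel determinants is equivalent to $\vec{q}$ being the moment vector of a strictly positive measure on $\reals$ (the non-degenerate Hamburger case). Convexity of $\mathcal{R}$ then follows from the standard fact that the set of moment vectors of positive measures is convex: if $\vec{q}^{(1)}$ and $\vec{q}^{(2)}$ arise from measures $\nu_1, \nu_2$, any convex combination arises from the corresponding convex combination of measures, and strict positivity of $D_0, D_1, D_2$ is preserved.

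For strict hyperbolicity, the first reduction is Galilean invariance: $\mat{B}$ depends on $u$ only through its diagonal, so $\mat{B} - u\mat{I}$ is independent of $u$ and the eigenvalues take the form $\lambda_i = u + \xi_i(\rho, p, \heat, k)$. Combining this with the velocity rescaling $v \mapsto \sqrt{T}\, v$, $T = p/\rho$, normalizes $\rho = p = 1$ and reduces the characteristic polynomial to a degree-five polynomial in $\xi$ with coefficients rational in the dimensionless pair $(\tilde{\heat}, \tilde{k})$ subject only to $\tilde{k} > 0$. Computing $\det(\mat{B} - \lambda \mat{I})$ by cofactor expansion along the first column (which in the normalized frame has a single nonzero off-diagonal entry $-1$), I would seek a factorization $\det(\mat{B} - \lambda \mat{I}) = -(\lambda - u)\, Q(\lambda - u;\, \tilde{\heat}, \tilde{k})$, the linear factor reflecting the HyQMOM node pinned at $u$. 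Establishing that the quartic $Q$ has four real distinct roots for every $\tilde{k} > 0$ is the analytic heart of the proof; I anticipate either a direct discriminant computation in $(\tilde{\heat}, \tilde{k})$ or, more conceptually, an identification of $Q$ with the characteristic polynomial of the Jacobi-matrix companion of the Gauss--Radau quadrature rule underlying HyQMOM, from which simplicity and interlacing of roots follow from classical orthogonal-polynomial theory.

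For the wave classification I apply the Lax criterion $\nabla_{\vec{\alpha}} \lambda_i \cdot \vec{r}_i$ eigenpair by eigenpair. The central eigenvalue $\lambda = u$ is the natural linearly degenerate field, mirroring the Euler contact: inspection of the first three rows of $\mat{B}$ shows that the associated right eigenvector annihilates $\nabla_{\vec{\alpha}} u = (0,1,0,0,0)^T$, forcing $\nabla \lambda \cdot \vec{r} \equiv 0$. For the four outer waves, with eigenpairs in closed or implicit form from the previous step, genuine nonlinearity reduces to the nonvanishing of a scalar polynomial in $(\rho, p, \heat, k)$ on $\mathcal{R}$, a routine symbolic check once the algebraic machinery is in place. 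The principal obstacle is thus the hyperbolicity step: the degree-five characteristic polynomial is algebraically heavy, and a brute discriminant bound is unwieldy, so the cleanest route is to exploit the Gauss--Radau structure of the HyQMOM quadrature so that the required real-rootedness, distinctness, and interlacing inherit from moment-problem theory rather than from direct algebra.
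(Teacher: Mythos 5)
Your outline correctly identifies the three things to be proved, but the analytic core of the proposition --- that the four non-contact eigenvalues of $\mat{B}(\vec{\alpha})$ are real and distinct for every $\rho,p,k>0$ --- is precisely the step you leave unexecuted: you ``anticipate either a direct discriminant computation'' or an identification of the quartic factor $Q$ with the characteristic polynomial of a Gauss--Radau Jacobi matrix. Neither route is carried out, and the second cannot work as stated. For HyQMOM the characteristic speeds are \emph{not} the quadrature abscissas: the paper's proof exhibits the eigenvalues explicitly as $\lambda=u$ and $\lambda=u+\heat/(2p)\pm\sqrt{a\pm b}$ with $a=p/\rho+k/p+(\heat/(2p))^2$ and $b=\sqrt{k^2/p^2+k/\rho}$, i.e.\ four distinct outer speeds, whereas the underlying three-point Radau rule has only two free nodes $u+\heat/(2p)\mp\sqrt{p/\widetilde{\rho}+(\heat/(2p))^2}$; moreover, if the speeds \emph{did} coincide with abscissas, the argument of \cref{theorem:weakhyp} would make the outer waves linearly degenerate, contradicting the genuine nonlinearity you also need. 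The paper's route is entirely elementary once the closed-form eigenvalues are written down: one checks $a>b>0$ and $\sqrt{a\mp b}>|\heat|/(2p)$. To close your proof you must either reproduce those formulas or actually supply the discriminant/interlacing argument; as written, the real-rootedness and distinctness of $Q$ is asserted, not proved. The same applies to the nonlinearity of the four outer fields, deferred to ``a routine symbolic check''; note the paper only establishes $\partial\lambda_\ell/\partial\vec{q}\cdot\vec{\rvec_\ell}\not\equiv 0$, so aiming for nonvanishing everywhere on the realizable set may be more than is true.

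Two parts of your proposal are sound and go beyond the paper. The convexity of $\{\rho,p,k>0\}$ via the Hankel matrix --- which is linear in $\vec{q}$ and positive definite exactly when $\Delta_0=\rho$, $\Delta_1=\rho p$, $\Delta_2=\rho p k$ are all positive --- is correct, and the paper merely asserts convexity without proof. Likewise your linear-degeneracy argument for the contact wave is clean: the first row of $\mat{B}-u\mat{I}$ is $(0,\rho,0,0,0)$, so the second component of the associated eigenvector vanishes and $\nabla_{\vec{\alpha}}\lambda_3\cdot\vec{r}=r_2=0$, which is simpler than the paper's computation in conservative variables. These do not, however, compensate for the missing hyperbolicity argument, which is the heart of the statement.
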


Note that the essential contents of this theorem were already known in Fox, Laurent, and Vi\'e \cite{article:FoxLauVie2018},
but never presented as a formal proposition and proved. Therefore, we include the proof here.

\begin{proof}
The eigenvalues of the Jacobian matrix, $\mat{B}\left(\vec{\alpha}\right)$, in \eqref{eqn:HyQMOM_prim} 
can be computed explicitly:
%
\begin{equation}
\label{eqn:HyQMOM_evals1}
\begin{gathered}
\lambda_1=u+\frac{\heat}{2p}-\sqrt{a+b}, \quad
\lambda_2=u+\frac{\heat}{2p}-\sqrt{a-b}, \quad
\lambda_3=u, \\
\lambda_4=u+\frac{\heat}{2p}+\sqrt{a-b}, \quad
\lambda_5=u+\frac{\heat}{2p}+\sqrt{a+b},
\end{gathered}
\end{equation}
where
\begin{equation}
\label{eqn:HyQMOM_evals2}
a = \frac{p}{\rho} + \frac{k}{p} + \left(\frac{\heat}{2p}\right)^2 \qquad \text{and}
\qquad b = \sqrt{\frac{k^2}{p^2}+\frac{k}{\rho}}.
\end{equation}
One can show via simple calculations that for all $\rho$, $p$, $k$ $>0$:
\begin{gather}
 a>b>0, \quad  \sqrt{a+b} > \frac{|\heat|}{2p}, \quad 
\text{and} \quad \sqrt{a-b} > \frac{|\heat|}{2p}.
\end{gather}
Therefore, all five eigenvalues shown in 
\eqref{eqn:HyQMOM_evals1}--\eqref{eqn:HyQMOM_evals2} are real and distinct
for all $\rho$, $p$, $k$ $>0$, which is sufficient to show that system \eqref{eqn:HyQMOM_sys1}--\eqref{eqn:HyQMOM_prim}
is strictly hyperbolic.

The eigenvectors of the flux Jacobian \eqref{eqn:cons_flux_jac} can be written
as follows:
\begin{equation}
\vec{\rvec_\ell}=\left[1,\lambda_\ell,\lambda_\ell^2,\lambda_\ell^3,\lambda_\ell^4\right]^T
\qquad \text{for}
\qquad \ell=1,2,3,4,5. 
\end{equation}
To determine whether the corresponding waves are linearly degenerate or not, we need to compute the quantities:
\begin{equation}
\frac{\partial \lambda_\ell}{\partial \vec{q}} \cdot \vec{\rvec_\ell} =
\frac{\partial \lambda_\ell}{\partial \vec\alpha} \cdot 
\left( \frac{\partial \vec{q}}{\partial \vec{\alpha}} \right)^{-1} \cdot \vec{\rvec_\ell}, \quad
\forall \ell=1,2,3,4,5,
\end{equation}
where
\begin{equation}
\frac{\partial \lambda_\ell}{\partial \vec{q}} := 
\left[ 
\frac{\partial \lambda_\ell}{\partial q_1}, \, 
\frac{\partial \lambda_\ell}{\partial q_2}, \, 
\frac{\partial \lambda_\ell}{\partial q_3}, \, 
\frac{\partial \lambda_\ell}{\partial q_4}, \, 
\frac{\partial \lambda_\ell}{\partial q_5} 
\right],
\end{equation}
and
\begin{equation}
\frac{\partial  \vec{q}}{\partial \vec{\alpha}} =
\begin{bmatrix}[1.5]
 1 && 0 &&  0 &&  0 &&  0 \\
 u &&  \rho &&  0 &&  0 &&  0 \\
 u^2  &&  2 \rho u  &&  1 &&  0 &&  0 \\
 u^3 &&  3 \left(p + \rho u^2\right) &&  3 u &&  1 &&  0 \\
 \frac{\rho^2 u^4 - p^2}{\rho^2}  &&  
  4 \left(\heat + 3 p u + \rho u^3 \right) &&   \frac{6 \rho p^2 u^2 + 2 p^3 - \rho \heat^2}{\rho p^2} &&    \frac{2 \heat + 4 p u}{p}  &&  1
\end{bmatrix}.
%
\end{equation}
%
%
We note that one of the waves is linearly degenerate while the remaining
are nonlinear:
\begin{equation}
\frac{\partial \lambda_3}{\partial \vec{q}} \cdot \vec{\rvec_3}
 \equiv 0 \qquad \text{and} \qquad
 \frac{\partial \lambda_\ell}{\partial \vec{q}} \cdot \vec{\rvec_\ell} \not\equiv 0 \quad \text{for} \quad \ell=1,2,4,5.
\end{equation}
\qedsymbol
\end{proof}


\section{Locally-implicit Lax-Wendroff discontinuous Galerkin}
\label{sec:numerics}
We consider generic one-dimensional conservation laws of the form:
\begin{equation}
\label{eqn:cons_law}
\vec{q}_{,t} + \vec{f}\left(\vec{q} \right)_{,x} = \vec{0},
\end{equation}
where $t$ is time, $x$ is space, $q(t,x): \in \reals^+ \times \reals \mapsto \reals^\meq$ is the vector of conserved variables, $\meq$ is the number
of equations, and
$\vec{f}\left(\vec{q} \right): \reals^\meq \mapsto \reals^\meq$ is the
flux function. We assume that this system is hyperbolic, meaning that the
flux Jacobian:
\begin{equation}
\vec{f}\left(\vec{q} \right)_{,\vec{q}}: \reals^\meq \mapsto \reals^{\meq \times \meq},
\end{equation}
has real eigenvalues and a complete set of eigenvectors over some convex 
region ${\mathcal D} \subset \reals^{\meq}$ in solution space inside of which we are interested in solving the equation.

The Lax-Wendroff method \cite{article:LxW60} is a time
discretization for
hyperbolic conservation laws  based on
the the Cauchy-Kovalevskaya 
\cite{article:Kovaleskaya1875} procedure to convert temporal derivatives into spatial derivatives.
We begin with a Taylor series in time:
\begin{equation}
 \vec{q}(t+\Delta t, x) = \vec{q}(t,x) + \Delta t \vec{q}_{,t}(t,x) + \frac{1}{2} \Delta t^2 \vec{q}_{,t,t}(t,x) +
 \ldots,
\end{equation}
and then replace all time derivatives by spatial derivatives:
\begin{equation}
\label{eqn:LxW_time_derivs}
 \vec{q}_{,t} = - \vec{f}\left(\vec{q}\right)_{,x}, \quad
 \vec{q}_{,t,t} = - \vec{f}\left(\vec{q}\right)_{,t,x} = -\left[ \mat{f'}\left(\vec{q}\right) \, \vec{q}_{,t} \right]_{,x} = 
 \left[ \mat{f'}\left(\vec{q}\right) \vec{f}\left(\vec{q}\right)_{,x} \right]_{,x}, \quad \ldots,
\end{equation}
which results in the following:
\begin{equation}
\vec{q}(t+\Delta t ,x) = \vec{q} - \Delta t \vec{f}\left(\vec{q}\right)_{,x} + \frac{1}{2} \Delta t^2 \left[ \mat{f'}\left(\vec{q}\right) \vec{f}\left(\vec{q}\right)_{,x} \right]_{,x}
+ \ldots,
\end{equation}
where on the right-hand side, we have suppressed the evaluation at $(t,x)$.
The final step is to truncate the Taylor series at some finite number of terms,
and then replace all spatial derivatives by some discrete spatial derivative operators.
The above Lax-Wendroff formalism \cite{article:LxW60} has been used in conjunction with
with a variety of spatial discretizations, including finite volume  \cite{article:Le97},
weighted essentially non-oscillatory (WENO) \cite{article:TitaToro02}, and discontinuous Galerkin \cite{article:Qiu05} operators.

In this work, we are concerned with the discontinuous Galerkin version 
of Lax-Wendroff \cite{article:Qiu05}; and in particular, we make use
of the reformulation of Gassner et al. \cite{article:GasDumHinMun2011} of the Lax-Wendroff discontinuous Galerkin (LxW-DG) scheme
in terms of a locally-implicit prediction step, followed by an explicit correction step.
The key advantage of this formulation is that we do not need to explicitly compute the
partial derivatives as shown in \eqref{eqn:LxW_time_derivs}; and instead, the locally-implicit
solver automatically produces discrete versions of these derivatives.
The next challenge is to efficiently solve the nonlinear algebraic equations arising from the 
locally-implicit prediction step; we solve these equations by following Gassner et al. \cite{article:GasDumHinMun2011} and making use of a Picard fixed point iteration. We will follow the notational conventions of Guthrey and Rossmanith \cite{article:GuRo2017} developed for locally-implicit and regionally-implicit LxW-DG schemes.
Note that the predictor-correct method is equivalent to the Lax-Wendroff DG method for linear constant-coefficient hyperbolic systems. For nonlinear systems, the predictor-correct method differs slightly from Lax-Wendroff DG in that the Picard iteration inside the prediction step approximates the direct computation of the nonlinear Taylor series expansion.

\subsection{Discontinuous Galerkin finite elements}
To discretize equation \eqref{eqn:cons_law} in space, we use the discontinuous Galerkin (DG) finite element method, which was first introduced by Reed and Hill \cite{article:ReedHill73}. It was fully developed for time-dependent hyperbolic conservation laws in a series of papers by Bernardo Cockburn, Chi-Wang Shu, and collaborators (see \cite{article:CoShu98} and references therein for details). 

We  define the broken finite element space 
\begin{equation}
\mathcal{W}^{\Delta x}:=\left\{w^{\Delta x}\in[L^\infty(\Omega)]^{M_\text{eqn}}:w^{\Delta x}\vert_{\mathcal{T}_i}\in[\mathbb{P}(M_\text{deg})]^{M_\text{eqn}} \quad \forall \mathcal{T}_i\right\},
\end{equation}
where $\Delta x=(x_\text{high}-x_\text{low})/M_\text{elem}$ is a uniform grid spacing with $M_\text{elem}$ being the number of elements. Additionally, $M_\text{eqn}$ is the number of conserved variables, 
$\mathbb{P}(M_\text{deg})$ is the set of all polynomials of degree at most $M_\text{deg}$, and the computational mesh is described by non-overlapping elements of width $\Delta x$ centered at the points $x_i$:
\begin{equation}
\mathcal{T}_i=\left[x_i-\frac{\Delta x}{2}, \, x_i+\frac{\Delta x}{2}\right]  \hspace{2em} \text{for } i=1,\dots,M_{\text{elem}}.
\end{equation}
On each element, we define the local spatial variable, $\xi$:
\begin{equation}
x=x_i+\left(\frac{\Delta x}{2}\right)\xi \quad \text{for} \quad \xi\in[-1,1].
\end{equation}

On each element, we approximate the solution by a finite expansion
in terms of the following orthonormal Legendre polynomial basis functions:
\begin{equation}
\label{eqn:legendre_basis}
\vec{\Phi} = \left( 1, \, \sqrt{3}\xi,\frac{\sqrt{5}}{2}\left(3\xi^2-1\right), \, \frac{\sqrt{7}}{2}\left(5\xi^3-3\xi\right), \, \frac{\sqrt{9}}{8}\left(35\xi^4-30\xi^2+3\right), \,\dots\right),
\end{equation}
with the orthonormality property:
\begin{equation}
 \frac{1}{2} \int_{-1}^{1} \vec{\Phi} \, \vec{\Phi}^T \, d\xi = \mat{ \, \, {\mathbb I} \, \, },
\end{equation}
where $\mat{ \, \, {\mathbb I} \, \, }$ is the identity matrix.
On each element at time $t=t^n$, we approximate the solution as follows:
\begin{equation}
\label{eqn:corr_ansatz}
\vec{q}^h\left(t^n,x_i+\frac{\Delta x}{2}\xi\right):=\vec{\Phi}(\xi)^T\mat{Q_i^n} \quad \text{for} \quad \xi\in[-1,1],
\end{equation}
where
\begin{align}
\vec{\Phi}(\xi):[-1,1]\mapsto \reals^{\mcorr} \quad \text{and} \quad \mat{Q_i^n}\in\reals^{\mcorr\times\meq}.
\end{align}
The number of basis functions in 1D is $\mcorr=\mdeg+1$, and the order of accuracy is $\morder = \mdeg + 1$.

\subsection{Prediction step}
\label{subsec:prediction}
The prediction step is completely local to each element, and therefore, the prediction step is inconsistent with the underlying conservation law.
This inconsistency allows us to freely choose updated variables; we use the primitive variables for this step: $\vec{\alpha}=(\rho, u, p, \heat, k)$. 

The prediction step is local on each space-element $\left[t^n,t^{n+1}\right] \times \Tm_i$, where $t^{n+1}=t^n+\Delta t$. Let $t=t^n+\frac{\Delta t}{2}(1+\tau)$, for $\tau\in[-1,1]$ and
\begin{equation}
\label{eqn:prediction_prim}
\vec{\alpha}_{,\tau}=\vec{\Theta}(\vec{\alpha}):=-\frac{\Delta t}{\Delta x} \, \mat{B}\left(\vec{\alpha}\right)\vec{\alpha}_{,\xi},
\end{equation}
where $\mat{B}(\vec{\alpha})$ is the (primitive variable) flux Jacobian matrix
defined by \eqref{eqn:HyQMOM_prim}. We introduce a space-time Legendre basis on each element:
\begin{equation}
\Psi_\ell(\tau,\xi)=\Phi_{\ell_1}(\tau)\Phi_{\ell_2}(\xi), \quad
\text{for} \quad
\ell_1 = 1,\ldots,\morder, \quad \ell_2 = 1,\ldots,\morder+1-\ell_1,
\end{equation}
where 
\begin{equation}
\ell = \morder \left( \ell_1 - 1 \right) - \frac{\left(\ell_1-1\right)\left(\ell_1-2 \right)}{2} + \ell_2 \quad \text{such that} \quad \ell = 1,\ldots,\mpred,
\end{equation}
and $\mpred = \morder(\morder+1)/2$ is the number of space-time basis functions. These space-time basis functions are 
orthonormal on $[-1,1]^2$:
\begin{equation}
\frac{1}{4} \iint_{-1}^{1} \vec{\Psi} \, \vec{\Psi}^T \, d\tau \, d\xi = 
  \mat{ \, \, \mathbb{I} \, \, }.
\end{equation}
 We write the predicted solution as follows:
\begin{equation}
\label{eqn:pred_ansatz}
\vec{\alpha}^{\text{ST}}\left(t^{n}+\frac{\Delta t}{2} (1+\tau), \, x_i +\frac{\Delta x}{2} \, \xi \right)   := \vec{\Psi}\left(\tau,\xi\right)^T \mat{W^{n+\half}_i},
\qquad \mat{W_i^{n+\half}} \in \reals^{\mpred \times \meq},
\end{equation}
for $(\tau,\xi) \in [-1,1]^2$, where $\mat{W}$ represents the matrix of unknown coefficients. 

We proceed by multiplying \eqref{eqn:prediction_prim} by the test function $\vec{\Psi}$, then integrating over $(\tau,\xi) \in [-1,1]^2$:
	\begin{equation}
\frac{1}{4} \iint_{-1}^{1} {\alpha}_{m,\tau} \, \vec{\Psi} \, d\tau \, d\xi =
\frac{1}{4} \, \iint_{-1}^{1} {\Theta}_m(\vec{\alpha}) \,
\vec{\Psi} \, d\tau \, d\xi,
\end{equation}
 and applying integration-by-parts in $\tau$ only:
 \begin{equation}
 \begin{gathered}
 	\frac{1}{4}\int_{-1}^{1} {\alpha}^{\star}_{m}\left(\tau=1,\xi\right) \, \vec{\Psi}\left(\tau=1,\xi\right) \,  d\xi -
	\frac{1}{4}\int_{-1}^{1} {\alpha}^{\star}_{m}\left(\tau=-1,\xi\right) \, \vec{\Psi}\left(\tau=-1,\xi\right) \,  d\xi \\
-\frac{1}{4}\iint_{-1}^{1} {\alpha}_{m} \, \vec{\Psi}_{,\tau} \, d\tau \, d\xi =
\frac{1}{4} \, \iint_{-1}^{1} {\Theta}_m(\vec{\alpha}) \,
\vec{\Psi} \, d\tau \, d\xi,
\end{gathered}
\end{equation}
where the choice of values for ${\alpha}^{\star}_{m}$ at $\tau=1$ and $\tau=-1$ still remains to be made.
Before making this choice, however, let us reverse integrate-by-parts, such that the newly introduced boundary
terms are always the internal values of the current space-time element:
	\[
	\begin{gathered}
	\frac{1}{4}\int_{-1}^{1} \Bigl[ {\alpha}^{\star}_{m}\left(\tau=1,\xi\right) - 
	{\alpha}_{m}\left(\tau=1,\xi\right) \Bigr] \, \vec{\Psi}\left(\tau=1,\xi\right) \,  d\xi \\
	-
	\frac{1}{4}\int_{-1}^{1} \Bigl[ {\alpha}^{\star}_{m}\left(\tau=-1,\xi\right) - {\alpha}_{m}\left(\tau=-1,\xi\right)\Bigr] \, \vec{\Psi}\left(\tau=-1,\xi\right) \,  d\xi \\
+\frac{1}{4}\iint_{-1}^{1}  \vec{\Psi} \, {\alpha}_{m,\tau} \, d\tau \, d\xi =
\frac{1}{4} \, \iint_{-1}^{1} {\Theta}_m(\vec{\alpha}) \,
\vec{\Psi} \, d\tau \, d\xi.
\end{gathered}
\]
Now we plug-in the following values for ${\alpha}^{\star}_{m}$ at $\tau=1$ and $\tau=-1$:
\begin{equation}
{\alpha}^{\star}_{m}\left(\tau=1,\xi\right) = {\alpha}_{m}\left(\tau=1,\xi\right)
\quad \text{and} \quad
{\alpha}^{\star}_{m}\left(\tau=-1,\xi\right) = \vec{\Phi} \left(\xi \right)^T \, \vec{A^n_{i \, (:,m)}},
\end{equation}
as well as the following values for ${\alpha}_{m}$ on the interior of the space-time element:
\begin{equation}
{\alpha}_{m}\left(\tau,\xi\right) = \vec{\Psi}\left(\tau,\xi\right)^T \vec{W^{n+\half}_{i(:,m)}}.
\end{equation}
 The result is
\begin{equation}
\label{eqn:pred_step}
\begin{split}
\mat{L} \, \, \vec{W^{n+\half}_{i \, (:,m)}} &= \frac{1}{4}\iint_{-1}^{1} 
  {\Theta}_m\left( \vec{\Psi}^T \mat{W^{n+\half}_i} \right) \vec{\Psi} \, d\tau \, d\xi \\ &+ 
  \left[ \frac{1}{4} \int_{-1}^{1} \, \vec{{\Psi}} \left(-1,\xi \right)  \, \vec{\Phi} \left(\xi \right)^T \, d\xi
 \right] \, \vec{A^n_{i \, (:,m)}},
 \end{split}
\end{equation}
for each equation $m=1,\ldots,\meq$,
 where
\begin{align}
\mat{L} &= \frac{1}{4} \iint_{-1}^{1} \vec{\Psi} \, \vec{\Psi}^T_{,\tau} \, d\tau \, d\xi
+ \frac{1}{4} \int_{-1}^{1} \vec{\Psi}_{|_{\tau=-1}} \vec{\Psi}_{|_{\tau=-1}}^T \, d\xi, \\
\mat{A_i^n} &= \frac{1}{2} \sum_{a=1}^{\morder} \omega_a \, \vec{\Phi}\left(\mu_a \right)  \left[ \vec{\alpha}\left( 
\vec{\Phi}\left(\mu_a \right)^T \mat{Q_i^n} \right) \right]^T ,
\end{align}
and $\vec{\alpha}(\vec{q})$ gives the relationship between conservative
and primitive variables.
Equation \eqref{eqn:pred_step} is a nonlinear algebraic equation
that must be solved on each space-time element for the matrix of unknown coefficients:
 $ \mat{W}$.

Following Gassner et al. \cite{article:GasDumHinMun2011}, instead of using Newton's method to solve the resulting non-linear equation, which involves inverting a Jacobian matrix at every step, we used the much simpler Picard iteration. There are two key advantages of the Picard iteration over Newton's method. First, since $\mat{L}$ is independent of the solution and the same on each space-time element, we only invert this relatively small matrix once. Second, the Picard iteration converges to sufficiently high order accuracy after $\morder-1$ iterations, so the need to compute residuals is eliminated (see
justification in \cite{article:GasDumHinMun2011}). We can write the Picard iteration as
\begin{equation}
\label{eqn:picard}
\begin{split}
\vec{W^{n+\half (j)}_{i \, (:,m)}} = & \frac{1}{4} \sum_{a=1}^{\morder}
\sum_{b=1}^{\morder} \, \omega_a  \, \omega_b \, 
  \vec{\hat{\Psi}}\left(\mu_b, \, \mu_a \right)
   \, {\Theta}_m\left( \vec{\Psi}\left(\mu_b, \mu_a \right)^T \mat{W^{n+\half (j-1)}_i} \right) \\
   + & \frac{1}{4} \sum_{b=1}^{\morder} \omega_{b} \, \vec{\hat{\Psi}} \left(-1,\xi_b \right)  \, \vec{\Phi} \left(\xi_b \right)^T \vec{A^{n}_{i \, (:,m)}},
\end{split}
\end{equation}
where $j=1,\ldots,\morder-1$ is the iteration count,
 $m=1,\ldots,\meq$ is the equation index, $\vec{\hat{\Psi}} = \mat{L}^{-1} \vec{\Psi}$, and
 $\omega_a$ and $\mu_a$ for $a=1,\ldots,\morder$ are the weights and abscissas of the $\morder$-point Gauss-Legendre quadrature rule.
This gives a solution for the prediction step, which we know is inconsistent with the conservation law. To make the final solution consistent (and high-order) with the original conservation law, we next need to add a correction step.

\subsection{Correction step}
\label{subsec:correction}
The correction step is designed to work like a single forward Euler-like step that uses the predicted solution. To perform this step, we begin with the hyperbolic conservation law \eqref{eqn:cons_law} and multiply by the spatial basis functions 
defined in \eqref{eqn:legendre_basis}.
Next, we integrate over the space-time element $\left(\tau, \xi \right) \in 
\left[-1,1\right]^2$:
\begin{equation}
        \frac{1}{2}\iint_{-1}^{1} 
        \left[ \vec{\Phi}(\xi) \, \vec{q}_{,\tau} + \frac{\Delta t}{\Delta x} \, \vec{\Phi}(\xi) \,\vec{f}\left(\vec{q}\right)_{,\xi} \right] \,  d\xi \, d\tau = \vec{0},
\end{equation}
which can be written as 
\begin{equation}
\label{eqn:corr_step}
\begin{split} 
        \frac{1}{2}\int_{-1}^{1}  
         \vec{\Phi}(\xi) \, \vec{q}\left(t^{n+1},x_i + 
         \frac{\Delta x}{2} \xi \right) \, \, d\xi &=\frac{1}{2}\int_{-1}^{1}  
         \vec{\Phi}(\xi) \, \vec{q}\left(t^{n},x_i + 
         \frac{\Delta x}{2} \xi \right) \, \, d\xi \\
        &-\frac{\Delta t}{2 \Delta x}\iint_{-1}^{1}  
        \vec{\Phi}(\xi) \,\vec{f}\left(\vec{q}\right)_{,\xi}  \, d\xi \, d\tau.
        \end{split}
    \end{equation}
We approximate $\vec{q}\left(t^{n+1}, \cdot \right)$ and
$\vec{q}\left(t^{n}, \cdot \right)$ in \eqref{eqn:corr_step}
via appropriate versions of ansatz \eqref{eqn:corr_ansatz}.
For the remaining term, we first apply integration-by-parts in space, then
replace the true solution $q$ by its space-time predicted
solution: \eqref{eqn:pred_ansatz}, and replace exact integration by numerical quadrature.
This results in the following expression:
 \begin{equation}
 \label{eqn:corr_step_update_1}
 \begin{split}
        \mat{Q_i^{n+1}} &= 
 \mat{Q^{n}_i} +  \frac{\Delta t}{2 \Delta x} \sum_{a=1}^{\morder}
\sum_{b=1}^{\morder} \, \omega_a  \, \omega_b \, \vec{\Phi}_{,\xi}\left(\mu_a \right) \left[
 \vec{f} \left( \vec{\Psi}\left(\mu_b,\mu_a\right)^T \mat{W^{n+\half}_{i}} \right) \right]^T \\
&- {\frac{\Delta t}{\Delta x}}\left( \, \vec{\Phi}(1) \left[ \vec{{\mathcal F}^{n+\half}_{i+\half}} \right]^T
-  \vec{\Phi}(-1) \left[ \vec{{\mathcal F}^{n+\half}_{i-\half}} \right]^T \, \right).
\end{split}
    \end{equation}
     In the expressions after the approximation symbol, $\approx$, we
       replaced all exact integration by a Gauss-Legendre quadrature, 
where $\omega_a$ and $\mu_a$ for $a=1,\ldots,\morder$ are the weights and abscissas of the $\morder$-point Gauss-Legendre quadrature rule.

The time-integrated numerical fluxes are defined using the predicted solution and the Rusanov \cite{article:Ru61} time-averaged flux: 
\begin{gather}
 \label{eqn:corr_step_update_2}
  \vec{{\mathcal F}^{n+\half}_{i-\half}} := \frac{1}{2} \sum_{a=1}^{\morder}
  \omega_a \, \vec{\mathcal F} \left( \mu_a \right),
\end{gather}
where the numerical flux at each temporal quadrature point is given by
\begin{equation}
 \label{eqn:corr_step_update_3}
   \vec{\mathcal F} \left( \tau \right) := \frac{1}{2} \left\{ \vec{f}\left( \vec{W_{\text{R}}}(\tau) \right) + 
  \vec{f} \left( \vec{W_{\text{L}}}(\tau) \right) \right\}   -  
       \frac{1}{2} \lmax(\tau) \left\{ \vec{q} \left( \vec{W_{\text{R}}}(\tau) \right) 
         - \vec{q} \left( \vec{W_{\text{L}}}(\tau) \right) \right\},
\end{equation}
where
\begin{gather}
 \label{eqn:corr_step_update_4}
  \vec{W_{\text{L}}}(\tau) := \vec{\Psi}\left(\tau,1\right)^T \mat{W^{n+\half}_{i-1}}, \qquad
  \vec{W_{\text{R}}}(\tau) := \vec{\Psi}\left(\tau,-1\right)^T \mat{W^{n+\half}_{i}},
\end{gather}
and $\lmax(\tau)$ is a local bound on the spectral radius of 
the flux Jacobian, $\vec{f}(\vec{q})_{,\vec{q}} = \mat{A}\left(\vec{q} \right)$, in the neighborhood of interface $x=x_{i-\half}$ and at time $t = t^{n} + (\tau+1)\Delta t/2$.

\begin{table}[!t]
\begin{center}
\begin{tabular}{|r||c||c||c||c|}
\hline
\text{order} & $\morder=1$ & $\morder=2$ & $\morder=3$ & $\morder=4$ \\
\hline\hline
\text{CFL \# used in practice} & $0.90$ & $0.30$ & $0.14$ & $0.09$ \\
\hline
\end{tabular}
\caption{The CFL numbers used in all simulations as a function of the method order.}
\label{table:CFL_numbers}
\end{center}
\end{table}

These steps are all it takes to regain the coupling neglected in the prediction step. We now have a solution that is not only consistent with the conservation law but is also high order. 
The Courant-Friedrichs-Lewy (CFL) number we use for each order is given in \cref{table:CFL_numbers}.
However, we still have some work to do to ensure that the solution is physical. We must be careful to maintain the positivity of the primitive variables $\rho$, $p$, $k$, as was necessary for the system's hyperbolicity and moment-realizability. We address the limiters utilized to accomplish this in the next section.

\section{HyQMOM limiters}
\label{sec:limiters}

The high-order numerical method as described in \S\ref{sec:numerics} does not guarantee that density, pressure, and modified kurtosis remain positive throughout a time-step:
\begin{equation}
\rho^n > 0, \quad p^n > 0, \quad k^n > 0 \quad \centernot\Longrightarrow \quad
\rho^{n+1} > 0, \quad p^{n+1} > 0, \quad k^{n+1} > 0.
\end{equation}
Recall that positivity of these quantities is needed to guarantee the moment-realizability of the moment-closure and strict hyperbolicity of the resulting evolution equations.
 If we want positivity over a time step, we will need to introduce {\it positivity-preserving limiters}. Additionally, if we want to control unphysical oscillations near large gradients, shocks, and rarefactions, we will also need {\it non-oscillatory limiters}. In this section, we derive all of these limiters. In particular, we first need to establish that a simple first-order scheme is positivity-preserving under some appropriate time-step restriction; this is done in \S\ref{subsec:positivity_LLF}. Using this result we derive a suite of limiters that ensure positivity: \S\ref{subsec:prediction_positivity_at_points} (positivity at select points in the prediction step), \S\ref{subsec:correction_positivity_average} (positivity of the average density, pressure, and modified kurtosis in each element in the correction step), and \S\ref{subsec:correction_positivity_at_points} (positivity at select points in the correction step). We then develop an unphysical oscillation limiter in \S\ref{subsec:oscillation_limiter}.

\subsection{Positivity of the Rusanov scheme}
\label{subsec:positivity_LLF}
Before considering positivity limiters for the high-order method, we must
first establish that simple first-order schemes, in this case, we consider the Rusanov (aka local Lax-Friedrichs) scheme \cite{article:Ru61}, are positivity-preserving under some appropriate time-step restriction. This is established in the theorem below, which is an extension of the result of Zhang and Shu \cite{article:ZhangShu2011} for the compressible Euler equations.

\begin{theorem}
\label{llf_pos}
Let $\vec{Q_i^n}$ be some
approximation of the element averages of the conserved variables 
on element $\Tm_i$ at time $t^n$,
and let $\vec{Q_i^{n+1}}$ be the element averages produced by the Rusanov scheme \cite{article:Ru61} 
at time $t^{n+1} = t^n + \Delta t$. Then 
 \begin{equation}
 \label{eqn:LLF_pos_result}
 \quad \rho_i^n > 0,  \, \, \, \, p_i^n > 0, \, \, \, \, k_i^n > 0 \, \, \, \, \, \forall i \quad \Longrightarrow \quad
\rho^{n+1}_i > 0, \, \, \, \, p^{n+1}_i > 0, \, \, \, \, k^{n+1}_i > 0 \, \, \, \, \, \forall i,
\end{equation}
 under the CFL condition:
 \begin{equation}
 \label{LLF_CFL_condition}
 \frac{\Delta t}{\Delta x}  \max_i\left(\lph\right)<1,
 \end{equation}
where 
\begin{equation}
\label{eqn:lambda_plusminus}
\lampm=\max\left\{\lmax\left(\vec{Q_i^n}\right), \, \lmax\left(\vec{Q^n_{i\pm1}}\right), \, \lmax\left(\frac{1}{2}\left(\vec{Q_i^n}+\vec{Q^n_{i\pm1}}\right)\right)\right\},
\end{equation}
and $\lmax\left(\vec{Q_i^n}\right)$ is a bound on the spectral radius of 
the flux Jacobian \eqref{eqn:cons_flux_jac} at state $\vec{Q_i^n}$.

\end{theorem}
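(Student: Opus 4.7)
The plan is to adapt the Zhang--Shu framework \cite{article:ZhangShu2011} to HyQMOM by exploiting the three-point quadrature representation \eqref{eqn:HyQMOM_dist}. Since the realizable set $\mathcal{D} := \{\vec{q}\in\reals^5 : \rho>0,\, p>0,\, k>0\}$ was already shown to be convex in the hyperbolicity proposition above, it suffices to express $\vec{Q}_i^{n+1}$ as a convex combination of states lying in $\mathcal{D}$.

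First I would algebraically rearrange the Rusanov update $\vec{Q}_i^{n+1}=\vec{Q}_i^n-\frac{\Delta t}{\Delta x}(\vec{\mathcal{F}}_{i+\half}-\vec{\mathcal{F}}_{i-\half})$ into the form
\begin{equation*}
\vec{Q}_i^{n+1} \, = \, c_0 \, \vec{Q}_i^n \, + \, c_L \, \vec{Q}_{i-\half}^{L} \, + \, c_R \, \vec{Q}_{i+\half}^{R},
\end{equation*}
where $\vec{Q}_{i+\half}^{R} := \vec{Q}_{i+1}^n - \lph^{-1}\vec{f}(\vec{Q}_{i+1}^n)$, $\vec{Q}_{i-\half}^{L} := \vec{Q}_{i-1}^n + \lmh^{-1}\vec{f}(\vec{Q}_{i-1}^n)$, and $c_R = \frac{\Delta t\,\lph}{2\Delta x}$, $c_L = \frac{\Delta t\,\lmh}{2\Delta x}$, $c_0 = 1-c_L-c_R$. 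The CFL condition \eqref{LLF_CFL_condition} forces $c_0>0$ and $c_L,c_R\ge 0$, with $c_0+c_L+c_R=1$, so by convexity of $\mathcal{D}$ the proof reduces to showing $\vec{Q}_{i\pm\half}^{R/L} \in \mathcal{D}$.

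The central observation is that, for any $\vec{Q}\in\mathcal{D}$, both $\vec{Q}$ and $\vec{f}(\vec{Q})$ share the same three-point quadrature structure:
\begin{equation*}
\vec{Q} = \sum_{j=1}^3 \omega_j \, \vec{v}_j, \qquad \vec{f}(\vec{Q}) = \sum_{j=1}^3 \omega_j \, \mu_j \, \vec{v}_j, \qquad \vec{v}_j := \bigl(1,\mu_j,\mu_j^2,\mu_j^3,\mu_j^4\bigr)^T,
\end{equation*}
with $(\mu_1,\mu_2,\mu_3) = (\mu_1,u,\mu_3)$ and $\omega_j > 0$ by the HyQMOM inversion; the last entry of $\vec{f}(\vec{Q})$ matches because the closure $\mom_5^{\star}$ was constructed precisely as the fifth moment of this discrete distribution. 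Consequently
\begin{equation*}
\vec{Q}_{i+\half}^{R} \, = \, \sum_{j=1}^3 \omega_j\Bigl(1-\frac{\mu_j}{\lph}\Bigr)\vec{v}_j,
\end{equation*}
and if $|\mu_j| < \lph$ for every abscissa then this is the moment vector of a three-point discrete measure with strictly positive weights at three distinct support points. The associated $3\times 3$ Hankel matrix then factors as $V\,\mathrm{diag}(\widetilde\omega)\,V^T$ with $V$ a Vandermonde matrix on distinct nodes, so all leading principal minors $D_0,D_1,D_2$ are strictly positive; together with $D_0 = \rho$, $D_1 = \rho p$, and the primitive-variable realizability identity \eqref{eqn:realizability_prim}, this is exactly $\rho>0$, $p>0$, $k>0$ for the shifted state. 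The analogous argument (with $\mu_j \mapsto -\mu_j$) handles $\vec{Q}_{i-\half}^{L}$.

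The main obstacle is therefore the strict bound $|\mu_j|<\lph$. For this I would combine the HyQMOM quadrature formula, simplified via the identity $p/\widetilde\rho = p/\rho + k/p$, to write $\mu_1,\mu_3 = u + \heat/(2p)\mp\sqrt{a}$, and compare it directly with the extreme eigenvalues $\lambda_1,\lambda_5 = u + \heat/(2p)\mp\sqrt{a+b}$ from \eqref{eqn:HyQMOM_evals1}--\eqref{eqn:HyQMOM_evals2}. Since $b>0$ whenever $k>0$, one immediately obtains the strict bracketing
\begin{equation*}
\lambda_1 \, < \, \mu_1 \, < \, u \, = \, \lambda_3 \, < \, \mu_3 \, < \, \lambda_5.
\end{equation*}
A brief case analysis on the sign of $\mu_j$ then promotes this to $|\mu_j|<\max(|\lambda_1|,|\lambda_5|)\le\lmax(\vec{Q}_{i+1}^n)\le\lph$, the final inequality being the very definition \eqref{eqn:lambda_plusminus}. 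This closes the verification that $\vec{Q}_{i\pm\half}^{R/L}\in\mathcal{D}$ and, via convexity, forces $\vec{Q}_i^{n+1}\in\mathcal{D}$.
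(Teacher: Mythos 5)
Your proof is correct, and it begins from the same convex-combination decomposition of the Rusanov update as the paper: your states $\vec{Q_{i\pm 1}^n}\mp \lambda^{-1}\vec{f}(\vec{Q_{i\pm 1}^n})$ and coefficients $c_0,c_L,c_R$ are exactly the paper's $\vec{M_i^{\pm}}$ and weights in \eqref{eqn:LLF_proof_1}. The two arguments diverge at the crucial step of showing those intermediate states are admissible. The paper stays at the level of the scalar functionals ${\mathcal C}_\rho,{\mathcal C}_p,{\mathcal C}_k$: it evaluates each at $\vec{Q}\pm\lambda^{-1}\vec{f}(\vec{Q})$ in closed form, observes the numerators are quadratics in $\lambda$ with explicit roots, and concludes positivity because $\lambda$ lies to the right of both roots, finishing with a Jensen-type inequality for these functionals. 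You instead exploit the kinetic origin of the closure: $\vec{f}(\vec{Q})=\sum_j\omega_j\mu_j\vec{v_j}$ for the three-delta ansatz, so the intermediate states are moment vectors of discrete measures with weights $\omega_j(1\mp\mu_j/\lambda)$, and everything reduces to the abscissa bound $|\mu_j|<\lambda$, which your identity $p/\widetilde\rho=p/\rho+k/p$ (correct) turns into the clean interlacing $\lambda_1<\mu_1<u<\mu_3<\lambda_5$; realizability of a positive measure on three distinct nodes then yields $\rho,p,k>0$ via the Vandermonde factorization of the Hankel matrix. Your route is more structural and arguably more illuminating --- it explains \emph{why} the Rusanov intermediate states are admissible (they are themselves realizable kinetic states), it would generalize to other quadrature-based closures whose flux is the next moment of a positive discrete measure, and by phrasing admissibility as membership in the convex set $\{\rho,p,k>0\}$ it sidesteps the convexity-versus-concavity bookkeeping for ${\mathcal C}_p$ and ${\mathcal C}_k$ on which the paper's inequality step leans. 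The paper's computation, in exchange, is self-contained algebra: it needs no interlacing lemma and no appeal to the quadrature structure, and it produces the explicit $\lambda$-roots that quantify how large the wave-speed bound must be. For completeness you should make explicit that all three HyQMOM weights are strictly positive ($\widetilde\rho>0$ and $\omega_2=\rho-\widetilde\rho>0$ follow from $\rho,p,k>0$), since the Vandermonde argument requires it, but this is a one-line check.
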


\begin{proof}
Recall that the Rusanov scheme can be written as 
\begin{equation}
\label{eqn:update_LLF}
\vec{Q_i^{n+1}}=\vec{Q_i^n}-\frac{\Delta t}{\Delta x}\left(\vec{\NF_{i+\frac{1}{2}}}-\vec{\NF_{i-\frac{1}{2}}}\right),
\end{equation}
where the numerical fluxes are given by 
\begin{equation}
\label{eqn:LLF_flux}
\vec{\NF_{i\pm\frac{1}{2}}} = \frac{1}{2}\left[\vec{f}(\vec{Q_i^n})+\vec{f}(\vec{Q_{i\pm1}^n})\right]-\frac{1}{2} \lampm \left(\vec{Q_i^n}-\vec{Q^n_{i\pm1}}\right),
\end{equation}
where the flux function, $\vec{f}\left(\vec{q}\right)$, is defined by \eqref{eqn:HyQMOM_sys2}
and the local wave speed, $\lampm$, is defined by \eqref{eqn:lambda_plusminus}.
We can rewrite the above expression into the following numerical update:
\begin{equation}
\label{eqn:LLF_proof_1}
\begin{split}
\vec{Q_i^{n+1}} 
&=\left[ 1-\frac{\Delta t}{2\Delta x}\left(\lph+\lmh\right)\right] \vec{Q_i^n}+
\left[\frac{\Delta t \lph}{2 \Delta x} \right] \vec{M^+_i} 
+\left[\frac{\Delta t \lmh}{2 \Delta x}\right] \vec{M^-_i},
\end{split}
\end{equation}
where
\begin{align}
 \vec{M^+_i} = \vec{Q_{i+1}^n}-\left( \lph \right)^{-1} {\vec{f}\left(\vec{Q_{i+1}^n}\right)}
 \quad \text{and} \quad
 \vec{M^-_i} = \vec{Q_{i-1}^n}+\left( \lmh \right)^{-1} {\vec{f}\left(\vec{Q_{i-1}^n}\right)}.
\end{align}
Under the CFL condition \eqref{LLF_CFL_condition}, we note that
\begin{equation}
\label{eqn:LLF_proof_3}
1-\frac{\Delta t}{2 \Delta x}\left(\lph+\lmh\right)\ge0, \quad
\frac{\Delta t \lph}{2 \Delta x}  \ge 0, \quad \text{and} \quad
\frac{\Delta t \lmh}{2 \Delta x} \ge 0.
\end{equation}
Additionally, note that the coefficients in \eqref{eqn:LLF_proof_1} sum to unity:
\begin{equation}
\label{eqn:LLF_proof_4}
\left[ 1-\frac{\Delta t}{2 \Delta x}\left(\lph+\lmh\right)\right] +
\left[\frac{\Delta t \lph}{2 \Delta x} \right] 
+\left[\frac{\Delta t \lmh}{2 \Delta x}\right] = 1.
\end{equation}
Now let ${\mathcal C}_{\alpha}$ be any convex function of the conserved variables: $\vec{q} = \left( \mom_0, \mom_1, \mom_2, \mom_3, \mom_4 \right)$. Then, applying the convex function to
both sides of  \eqref{eqn:LLF_proof_1} we see that
\begin{equation}
\begin{split}
{\mathcal C}_{\alpha}\left(\vec{Q_i^{n+1}}\right) \le &\left[ 1-\frac{\Delta t}{2 \Delta x}\left(\lph+\lmh\right)\right] {\mathcal C}_{\alpha}\left(\vec{Q_i^n} \right) \\ +
&\left[\frac{\Delta t \lph}{2 \Delta x} \right] {\mathcal C}_{\alpha}\left(\vec{M^+_i} \right)
+\left[\frac{\Delta t \lmh}{2 \Delta x}\right] {\mathcal C}_{\alpha}\left(\vec{M^-_i} \right),
\end{split}
\end{equation}
which follows from conditions \eqref{eqn:LLF_proof_3} and \eqref{eqn:LLF_proof_4}, as well as the 
property of convex functions shown in lemma \ref{lemma:convexity}.

Furthermore, we note that density, pressure, and modified kurtosis ($\rho$, $p$, and $k$) 
are all convex functions of $\vec{q} = \left( \mom_0, \mom_1, \mom_2, \mom_3, \mom_4 \right)$; the density is trivially convex, while the pressure is convex if $\rho>0$, and the modified kurtosis is convex if $\rho>0$ and $p>0$ (see definitions \eqref{eqn:prim_vars_2}).
Therefore, to prove result \eqref{eqn:LLF_pos_result}, all we need to show is that
\begin{equation}
{\mathcal C}_{\alpha}\left(\vec{M^+_i} \right) > 0 \qquad \text{and} \qquad
{\mathcal C}_{\alpha}\left(\vec{M^-_i} \right) > 0,
\end{equation}
with ${\mathcal C}_{\alpha}$ chosen as the density, pressure, and modified kurtosis:
\begin{equation}
\label{eqn:convex_functions}
\begin{gathered}
{\mathcal C}_{\rho}\left(\vec{q}\right) := q_1, \quad
{\mathcal C}_{p}\left(\vec{q}\right) := q_3 - \frac{q_2^2}{q_1}, \\
{\mathcal C}_{k}\left(\vec{q}\right) := \frac{q_3^3 - 2 q_2 q_3 q_4 + q_1 q_4^2 + q_2^2 q_5 - q_1 q_3 q_5}{q_2^2 - q_1 q_3}.
\end{gathered}
\end{equation}

\begin{enumerate}
\item {\bf Density:} We take $\alpha\equiv\rho$ and note that
\begin{equation}
\label{eqn:LLF_rho_pos}
 {\mathcal C}_{\rho}\left(\vec{Q}\pm\ \lambda^{-1} \vec{f}(\vec{Q})\right)= 
 \frac{\left( \lambda \pm u \right) \rho}{\lambda}.
\end{equation}
Positivity of \eqref{eqn:LLF_rho_pos} follows from the fact that the wave speed, $\lambda$, always exceeds the local fluid speed, $|u|$.

\smallskip

\item {\bf Pressure:} We take $\alpha \equiv p$ and note that
\begin{equation}
\label{eqn:LLF_press_pos}
 {\mathcal C}_p\left(\vec{Q}\pm {\lambda^{-1}} \vec{f}(\vec{Q})\right) =
\frac{p \rho \left(\lambda \pm u\right)^2 + \heat \rho \left(u \pm \lambda \right) - p^2}{\lambda\left(\lambda \pm u \right) \rho},
\end{equation}
for which we note that the numerator is a quadratic polynomial in $\lambda$. 
The roots  of this quadratic polynomial can easily be computed:
\begin{equation*}
\begin{split}
z_1=\mp \left( u + \frac{\heat}{2p} \right) -  \sqrt{\frac{p}{\rho} + \left(\frac{\heat}{2p}\right)^2}, \quad
z_2=\mp \left( u + \frac{\heat}{2p} \right) +  \sqrt{\frac{p}{\rho} + \left(\frac{\heat}{2p}\right)^2}.
\end{split}
\end{equation*}
Positivity of \eqref{eqn:LLF_press_pos} follows from the fact that $\lambda >
\max\left\{ z_1, \, z_2 \right\}$ (i.e., $\lambda$ is always to the right of the roots) and $p \rho>0$ (i.e., the quadratic is concave up).

\smallskip

\item {\bf Modified kurtosis:} We take $\alpha \equiv k$  and note that
\begin{equation}
\label{eqn:LLF_kurt_pos}
 {\mathcal C}_k\left(\vec{Q}\pm {\lambda^{-1}}\vec{f}(\vec{Q})\right) =
 \left(\frac{k (\lambda \pm u)}{\lambda} \right) \left(\frac{p \rho \left(\lambda \pm u\right)^2  + \heat \rho \left(u \pm \lambda \right) -p^2 - k \rho}{p \rho \left(\lambda \pm u\right)^2 + \heat \rho \left(u \pm \lambda \right) - p^2} \right),
\end{equation}
where the numerator of the second fraction is again a quadratic polynomial in $\lambda$. Showing that this quadratic is positive is sufficient to show that the whole expression is positive since the remaining pieces are already
positive due to the previously established positivity of \eqref{eqn:LLF_rho_pos} and \eqref{eqn:LLF_press_pos}. The roots of the quadratic are:
\begin{equation*}
\begin{split}
z_1=\mp \left( u + \frac{\heat}{2p} \right) -  \sqrt{\frac{k}{p} + \frac{p}{\rho} + \left(\frac{\heat}{2p}\right)^2}, \quad
z_2=\mp \left( u + \frac{\heat}{2p} \right) +  \sqrt{\frac{k}{p} + \frac{p}{\rho} + \left(\frac{\heat}{2p}\right)^2}.
\end{split}
\end{equation*}
Positivity of \eqref{eqn:LLF_kurt_pos} follows from the fact that $\lambda >
\max\left\{ z_1, \, z_2 \right\}$ (i.e., $\lambda$ is always to the right of the roots) and $p \rho>0$ (i.e., the quadratic is concave up).
\end{enumerate}
\qedsymbol
\end{proof}

We have now shown that the first-order method will maintain positivity from one time step to the next under an appropriate CFL condition. However, higher-order methods will not automatically guarantee positivity; we address this issue in the subsequent subsections.

\subsection{Limiter I: Positivity-at-points in the prediction step}
\label{subsec:prediction_positivity_at_points}
The prediction step, as described in \eqref{eqn:picard} requires numerical quadrature in space-time in each Picard iteration. Furthermore, once the
predicted solution has been computed, it will again be integrated in
space-time in the correction step (i.e., see \eqref{eqn:corr_step_update_1}--\eqref{eqn:corr_step_update_4}). To guarantee that all of the
numerical quadratures in both the prediction and correction steps only use positive values of density, pressure, and modified kurtosis, we introduce a prediction-step positivity limiter.

Let the 1D Gauss-Legendre points internal to each element, augmented by the element 
end-points (i.e., the location of the element faces), be defined as follows:
\begin{equation}
\label{eqn:space_pos_points}
{\mathbb X}_{\morder} := \Bigl\{ -1, 1 \Bigr\} \cup \Bigl\{ \text{roots of the $\morderth$ degree Legendre polynomial} \Bigr\},
\end{equation}
where $\morder$ is the desired order of accuracy. Note that ${\mathbb X}_{\morder}$
contains a total of $\morder+2$ points.
We note that all of the quadratures in the prediction update \eqref{eqn:picard}
and correction update \eqref{eqn:corr_step_update_1}--\eqref{eqn:corr_step_update_4}
only depends on the predicted solution at a small number of quadrature points,
which are fully contained in the Cartesian product of ${\mathbb X}_{\morder}$ with itself:
\begin{equation}
\label{eqn:spacetime_pos_points}
{\mathbb X}^2_{\morder} := {\mathbb X}_{\morder} \otimes {\mathbb X}_{\morder}.
\end{equation}
Therefore, ${\mathbb X}^2_{\morder}$
contains a total of $(\morder+2)^2$ points.
Our goal is thus to enforce positivity at all the space-time points
$(\tau,\xi) \in {\mathbb X}^2_{\morder}$.

Following the strategy developed by
Zhang and Shu \cite{article:ZhangShu11} for the Runge-Kutta discontinuous Galerkin scheme, we apply the following procedure, which is applied element-by-element.
\begin{description}
\item[{\bf Step 1.}] On the current space-time element defined over 
\[(t,x) \in
\left[t^n, \, t^n + \Delta t\right] \times  \left[x_i - \frac{\Delta x}{2}, \, 
x_i + \frac{\Delta x}{2}\right],
\]
 the solution is given by \eqref{eqn:pred_ansatz}. Find the 
minimum density, pressure, and modified kurtosis of this solution
over the points $(\tau, \xi) \in {\mathbb X}^2_{\morder}$: 
\begin{equation}
\alpha_{i}^{(m)} := \min_{(\tau, \xi) \in {\mathbb X}^2_{\morder}}
   \left\{ \vec{\Psi}\left(\tau,\xi\right)^T \vec{W^{n+\half}_{i \, (:,m)}} \right\},
\end{equation}
for $m=1$ (density), $m=3$ (pressure), and $m=5$ (modified kurtosis).

\item[{\bf Step 2.}] Rewrite the solution as
\begin{equation}
\label{eqn:pred_step_limiter_damping}
\begin{split}
\vec{\alpha}^{\text{ST}}\left(t^{n}+\frac{\Delta t}{2} (1+\tau), \, x_i +\frac{\Delta x}{2} \, \xi; \, \theta \right) := 
 (1-\theta) \, \vec{W^{n+\half}_{i \, (1, :)}} + \theta \, \vec{\Psi}\left(\tau,\xi\right)^T \mat{W^{n+\half}_i},
\end{split}
\end{equation}
where $\theta \in [0,1]$, such that $\theta=1$ recovers the original solution
\eqref{eqn:pred_ansatz} and $\theta=0$ results in reducing the entire solution
on to its space-time average. We now choose the largest possible $\theta \in [0,1]$
so that \eqref{eqn:pred_step_limiter_damping} is positive for components
$m=1 \, (\text{density}), \, 3 \, (\text{pressure}), \, 5 \, (\text{modified kurtosis})$ at all the space-time points in ${\mathbb X}^2_{\morder}$.
This is achieved by taking
\begin{equation}
\theta = \min_{m \in \left\{1,3,5\right\}}\min\left\{ 1, \frac{{W^{n+\half}_{i \, (1, m)}} - \varepsilon}{{W^{n+\half}_{i \, (1, m)}} - \alpha_{i}^{(m)}} \right\},
\end{equation}
where $\varepsilon>0$ is a preselected small constant (e.g., in this work
we select $\varepsilon = 
10^{-14}$).
\end{description}

\subsection{Limiter II: Positivity-in-the-mean in the correction step}
\label{subsec:correction_positivity_average}
One of the key challenges in the correction step, as described by
\eqref{eqn:corr_step_update_1}--\eqref{eqn:corr_step_update_4} 
is to make sure that the element
averages of the density, pressure, and modified kurtosis remain positive
at the end of the time-step:
$\overbar{\rho}_i^{n+1}>0$,  $\overbar{p}_i^{n+1}>0$, and $\overbar{k}_i^{n+1}>0$,
where the bar over each variable refers to the element average.
The prediction step limiter described in the previous subsection, \S\ref{subsec:prediction_positivity_at_points}, helps with this positivity-in-the-mean but cannot guarantee it. Furthermore, without positivity-in-the-mean, we cannot achieve positivity of the higher-order polynomial inside the element (i.e., if the polynomial average is negative, a significant portion of the polynomial must be negative inside the element). To overcome this challenge, we extend the approach developed by Moe et al. \cite{article:MoRoSe17}, which, for the element averages, blends the high-order
 update described by \eqref{eqn:corr_step_update_1}--\eqref{eqn:corr_step_update_4}
 with a first-order Rusanov scheme. We have already proved in Theorem \ref{llf_pos} that the Rusanov scheme is guaranteed to preserve positivity. 

We begin by defining the  Rusanov \cite{article:Ru61} (aka local Lax-Friedrichs)
update based on the element averages at $t=t^n$:
\begin{equation}
\label{eqn:update_LxF_limiter}
\vec{Q_i^{\text{Rus}}} \, := \, \vec{Q^n_{i(1,:)}}-\frac{\Delta t}{\Delta x} \left( \, \vec{\NF_{i+\half}^{\text{Rus}}}-
\vec{\NF_{i-\half}^{\text{Rus}}} \, \right),
\end{equation}
where the numerical flux is given by
\begin{equation}
\vec{\NF_{i-\half}^{\text{Rus}}}:=\frac{1}{2}\left[\vec{f}\left(\vec{Q^n_{i(1,:)}}\right)+\vec{f}\left(\vec{Q^n_{i-1(1,:)}}\right)\right]-\frac{1}{2}\Bigl|\lmh\Bigr|\left(\vec{Q^n_{i(1,:)}}-\vec{Q^n_{i-1(1,:)}}\right),
\end{equation}
and $\Bigl|\lmh\Bigr|$ is a local bound of the flux Jacobian spectral radius.
 Recall that $\vec{Q_i^{\text{Rus}}}$ is guaranteed to have positive
 density, pressure, and modified kurtosis under a time-step restriction (see Theorem \ref{llf_pos}).

Next, we write the update for the element averages of the high-order
method in terms of the low-order update \eqref{eqn:update_LxF_limiter}:
\begin{equation}
\label{eqn:pos_in_mean_updated}
  \vec{Q^{n+1}_{i \, (1,:)}} = \vec{Q^\text{Rus}_{i \, (1,:)}} - \frac{\Delta t}{\Delta x} \left( \, \theta_{i+\half} \, \vec{\Delta {\mathcal F}_{i+\half}}  
  - \theta_{i-\half} \, \vec{\Delta {\mathcal F}_{i-\half}} \, \right),
\end{equation}
where the difference between the high and low-order fluxes is given by
\begin{equation}
\label{eqn:flux_differences}
\vec{\Delta {\mathcal F}_{i-\half}} := \vec{{\mathcal F}^{n+\half}_{i-\half}} 
   - \vec{{\mathcal F}^{\text{Rus}}_{i-\half}},
\end{equation}
and $\theta_{i+\half}\in[0,1]$ measures the amount of flux limiting, where $\theta_{i+\half}=0$
represents maximal limiting (i.e., no high-order flux contributions) and
$\theta_{i+\half}=1$ represents no limiting (i.e., no low-order flux contributions). 

The strategy for the positivity-in-the-mean limiter is then to find the
maximum $\theta_{i+\half}\in[0,1]$ such that $\forall i$
\begin{equation}
{\mathcal C}_{\alpha} \left( \vec{Q^{n+1}_{i \, (1,:)}} \right) > 0,
\end{equation}
for $\alpha\equiv \rho$ (density), $\alpha\equiv p$ (pressure), 
and $\alpha\equiv k$ (modified kurtosis), as defined in \eqref{eqn:convex_functions}.
The strategy for achieving this is outlined below and
is applied element-by-element.  The process begins
by initializing $\theta_{i+\half}=1$ $\forall i$.

\begin{description}
\item[{\bf Step 1: (density)}] 
Define
\begin{equation}
\Gamma:= \frac{\Delta x}{\Delta t} \left( {Q^\text{Rus}_{i(1)}-\varepsilon} \right).
\end{equation}
Set $\Lambda_{\text{left}}=\Lambda_{\text{right}}=1$ (i.e., full high-order flux), but modify these if there is any potential for the density to decrease below
zero.

\smallskip

\begin{description}
\item[{\bf Case 1}.] If $\Delta\NF_{i-\half (1)}<0$ and $\Delta\NF_{i+\half (1)}<0$, then 
\begin{equation}
\Lambda_{\text{left}}=\Lambda_{\text{right}}=\min\left\{1,\frac{\Gamma}{\left\vert\Delta\NF_{i-\half (1)}\right\vert+\left\vert\Delta\NF_{i+\half (1)}\right\vert}\right\}.
\end{equation}

\smallskip

\item[{\bf Case 2}.]  If $\Delta\NF_{i-\half (1)}<0$ and $\Delta\NF_{i+\half (1)}>0$ then 
\begin{equation}
\Lambda_{\text{left}}=\min\left\{1,\frac{\Gamma}{\left\vert\Delta\NF_{i-\half (1)}\right\vert}\right\}.
\end{equation}

\smallskip

\item[{\bf Case 3}.]  If $\Delta\NF_{i+\half (1)}<0$ and $\Delta\NF_{i-\half (1)}>0$ then 
\begin{equation}
\Lambda_{\text{right}}=\min\left\{1,\frac{\Gamma}{\left\vert\Delta\NF_{i+\half (1)}\right\vert}\right\}.
\end{equation}
\end{description}

\item[{\bf Step 2: (pressure)}] 
 Compute the Rusanov pressure (which is guaranteed to be positive):
\begin{equation}
p^\text{Rus} := {\mathcal C}_{p} \left( \vec{Q^{\text{Rus}}_{i}}\right).
\end{equation}
Set $\mu_{11}=\mu_{10}=\mu_{01}=1$, but modify these if there is any potential for the pressure to decrease below zero.

\medskip

\begin{description}
\item[{\bf Part 2A.}] Set 
\begin{equation}
\vec{Q^\star}=\vec{Q^{n}_{i \, (1,:)}}-\frac{\Delta t}{\Delta x} \left(\Lambda_{\text{right}} \, \vec{\NF_{i+\half}^{n+\half}}-\Lambda_{\text{left}} \, \vec{\NF_{i-\half}^{n+\half}} \right),
\quad
p^\star = {\mathcal C}_{p} \left( \vec{Q^\star} \right).
\end{equation}
If $p^\star<\varepsilon$, then we set $\mu_{11}=\left({p^\text{Rus}-\varepsilon}\right)\bigl/\left(p^\text{Rus}-p^\star \right)$. 

\item[{\bf Part 2B.}] Set 
\begin{equation}
\vec{Q^\star}=\vec{Q^{n}_{i \, (1,:)}} + \frac{\Delta t}{\Delta x} \Lambda_{\text{left}} \, \vec{\NF_{i-\half}^{n+\half}},
\quad
p^\star = {\mathcal C}_{p} \left( \vec{Q^\star} \right).
\end{equation}
If $p^\star<\varepsilon$, then  $\mu_{10}=\left({p^\text{Rus}-\varepsilon}\right)/\left(p^\text{Rus} - p^\star \right)$. 

\medskip

\item[{\bf Part 2C.}] Set
\begin{equation}
\vec{Q^\star}=\vec{Q^{n}_{i \, (1,:)}}-\frac{\Delta t}{\Delta x} \Lambda_{\text{right}} \, \vec{\NF_{i+\half}^{n+\half}},
\quad
p^\star = {\mathcal C}_{p} \left( \vec{Q^\star} \right).
\end{equation}
If $p^\star<\varepsilon$, then  $\mu_{01}=\left(p^\text{Rus}-\varepsilon\right)/\left(p^\text{Rus} - p^{\star}\right)$.

\medskip

\item [{\bf Part 2D.}] Set
\begin{equation}
\mu = \min\bigl\{\mu_{11},\mu_{10},\mu_{01}\bigr\}, \quad
\Lambda_{\text{left}}  \leftarrow \mu \, \Lambda_{\text{left}}, \quad
\Lambda_{\text{right}} \leftarrow \mu \, \Lambda_{\text{right}}.
\end{equation}

\end{description}

\smallskip

\item[{\bf Step 3: (modified kurtosis)}] 
 Compute the Rusanov modified kurtosis (which is guaranteed to be positive):
\begin{equation}
k^\text{Rus} := {\mathcal C}_{k} \left( \vec{Q^{\text{Rus}}_{i}}\right).
\end{equation}
Set $\mu_{11}=\mu_{10}=\mu_{01}=1$, but modify these if there is any potential for the pressure to decrease below zero.

\medskip

\begin{description}
\item[{\bf Part 3A.}] Set 
\begin{equation}
\vec{Q^\star}=\vec{Q^{n}_{i \, (1,:)}}-\frac{\Delta t}{\Delta x}\left(\Lambda_{\text{right}} \, \vec{\NF_{i+\half}^{n+\half}}-\Lambda_{\text{left}} \, \vec{\NF_{i-\half}^{n+\half}} \right),
\quad
k^\star = {\mathcal C}_{k} \left( \vec{Q^\star} \right).
\end{equation}
If $k^\star<\varepsilon$, then we set $\mu_{11}=\left({k^\text{Rus}-\varepsilon}\right)\bigl/\left(k^\text{Rus}-k^\star \right)$. 

\item[{\bf Part 3B.}] Set 
\begin{equation}
\vec{Q^\star}=\vec{Q^{n}_{i \, (1,:)}} + \frac{\Delta t}{\Delta x} \Lambda_{\text{left}} \, \vec{\NF_{i-\half}^{n+\half}},
\quad
k^\star = {\mathcal C}_{k} \left( \vec{Q^\star} \right).
\end{equation}
If $k^\star<\varepsilon$, then  $\mu_{10}=\left({k^\text{Rus}-\varepsilon}\right)/\left(k^\text{Rus} - k^\star \right)$. 

\medskip

\item[{\bf Part 3C.}] Set
\begin{equation}
\vec{Q^\star}=\vec{Q^{n}_{i \, (1,:)}}-\frac{\Delta t}{\Delta x} \Lambda_{\text{right}} \, \vec{\NF_{i+\half}^{n+\half}},
\quad
k^\star = {\mathcal C}_{k} \left( \vec{Q^\star} \right).
\end{equation}
If $k^\star<\varepsilon$, then  $\mu_{01}=\left(p^\text{Rus}-\varepsilon\right)/\left(k^\text{Rus} - k^{\star}\right)$.

\medskip

\item [{\bf Part 3D.}] Set
\begin{equation}
\mu = \min\bigl\{\mu_{11},\mu_{10},\mu_{01}\bigr\}, \quad
\Lambda_{\text{left}}  \leftarrow \mu \, \Lambda_{\text{left}}, \quad
\Lambda_{\text{right}} \leftarrow \mu \, \Lambda_{\text{right}}.
\end{equation}

\end{description}

\item[{\bf Step 4:}] Set
\begin{equation}
  \theta_{i-\half} \leftarrow \min \left\{ \theta_{i-\half}, \, 
  \Lambda_{\text{left}} \right\} \quad \text{and} \quad
  \theta_{i+\half} \leftarrow \min \left\{ \theta_{i+\half}, \, 
  \Lambda_{\text{right}} \right\}.
\end{equation}

\end{description}
In all of the above formulas, we select in this work: $\varepsilon = 
10^{-14}$.


\subsection{Limiter III: Positivity-at-points in the correction step}
\label{subsec:correction_positivity_at_points}
Once we have ensured that the element averages are positive, we then look to enforce positivity of the corrected solution at spatial quadrature points:  $\xi \in {\mathbb X}_{\morder}$ as defined by \eqref{eqn:space_pos_points}. Following the ideas developed by
Zhang and Shu \cite{article:ZhangShu11} for the Runge-Kutta discontinuous Galerkin scheme, we aim to find the maximum $\theta\in[0,1]$ such that
\begin{equation}
\label{eqn:corr_limiter_scaled}
\vec{q}^{h}\left( t^{n+1}, \, x_i +\frac{\Delta x}{2} \, \xi; \theta \right) := \left(1-\theta \right) \, \vec{Q^{n+1}_{i \, (1, :)}} + \theta \,  \vec{\Phi} (\xi)^T \, 
\mat{Q^{n+1}_{i}}
\end{equation}
is positive at all points $\xi \in {\mathbb X}_{\morder}$ for every space 
element $\Tm_i$.
As in the prediction step limiter from \S\ref{subsec:prediction_positivity_at_points}, $\theta=0$ means that the solution is limited fully down to its element average, while $\theta=1$ means that no limiting is needed and the full high-order approximation can be used.
We apply the following procedure element-by-element.
\begin{description}
\item[{\bf Step 1.}] On the current element defined over 
\[x \in \left[x_i - \frac{\Delta x}{2}, \, 
x_i + \frac{\Delta x}{2}\right],
\]
 the solution is given by \eqref{eqn:corr_ansatz}. Find the 
minimum density $\forall \xi \in {\mathbb X}_{\morder}$ (see \eqref{eqn:space_pos_points}) and compute the corresponding damping parameter
($\theta$): 
\begin{equation}
\rho_{i}^{\text{min}} := \min_{\xi \in {\mathbb X}_{\morder}}
   \left\{ \vec{\Phi}\left(\xi\right)^T \vec{Q^{n+1}_{i \, (:,1)}} \right\},
   \quad
   \theta = \min\left\{ 1, \frac{{Q^{n+1}_{i \, (1, 1)}} - \varepsilon}{{Q^{n+1}_{i \, (1, 1)}} - \rho_{i}^{{\text{min}}}} \right\}.
\end{equation}
Finally, rescale the higher-order coefficients using the above
calculated damping parameter ($\theta$):
\begin{equation}
\vec{Q^{n+1}_{i \, (:,\ell)}} \leftarrow \theta \, \vec{Q^{n+1}_{i \, (:,\ell)}}
\quad  \forall\ell=2,\ldots,\mcorr.
\end{equation}

\item[{\bf Step 2.}] Now that the density is positive $\forall \xi \in {\mathbb X}_{\morder}$, we repeat {\bf Step 1} for the pressure. That is,
we find the average pressure and the minimum pressure $\forall \xi \in {\mathbb X}_{\morder}$: 
\begin{equation}
\overbar{P}_i := {\mathcal C}_p \left(\vec{Q^{n+1}_{i \, (1, :)}}\right),
   \quad
p_{i}^{\text{min}} := \min_{\xi \in {\mathbb X}_{\morder}}
   \left\{ {\mathcal C}_p \left(\vec{\Phi}\left(\xi\right)^T \mat{Q^{n+1}_{i}}\right)\right\},
   \end{equation}
 where ${\mathcal C}_p\left(\vec{q}\right)$ is defined in \eqref{eqn:convex_functions}.
From here, we compute the corresponding damping parameter and rescale the higher-order coefficients:
\begin{equation}
   \theta = \min\left\{ 1, \frac{\overbar{P}_i - \varepsilon}{\overbar{P}_i - p_{i}^{{\text{min}}}} \right\}, \quad
   \vec{Q^{n+1}_{i \, (:,\ell)}} \leftarrow \theta \, \vec{Q^{n+1}_{i \, (:,\ell)}} \quad \forall\ell=2,\ldots,\mcorr.
\end{equation}

\item[{\bf Step 3.}] Now that both density and pressure are positive $\forall \xi \in {\mathbb X}_{\morder}$, we repeat {\bf Step 2} for the modified kurtosis. That is, we find the average modified kurtosis and the minimum modified kurtosis $\forall \xi \in {\mathbb X}_{\morder}$: 
\begin{equation}
\overbar{K}_i := {\mathcal C}_k \left(\vec{Q^{n+1}_{i \, (1, :)}}\right),
   \quad
k_{i}^{\text{min}} := \min_{\xi \in {\mathbb X}_{\morder}}
   \left\{ {\mathcal C}_k \left(\vec{\Phi}\left(\xi\right)^T \mat{Q^{n+1}_{i}}\right)\right\},
   \end{equation}
 where ${\mathcal C}_k\left(\vec{q}\right)$ is defined in \eqref{eqn:convex_functions}.
From here, we compute the corresponding damping parameter and rescale the higher-order coefficients:
\begin{equation}
   \theta = \min\left\{ 1, \frac{\overbar{K}_i - \varepsilon}{\overbar{K}_i - k_{i}^{{\text{min}}}} \right\}, \quad
   \vec{Q^{n+1}_{i \, (:,\ell)}} \leftarrow \theta \, \vec{Q^{n+1}_{i \, (:,\ell)}}
 \quad \forall\ell=2,\ldots,\mcorr.
\end{equation}
\end{description}
In all the formulas presented above we use $\varepsilon = 10^{-14}$.

\subsection{Limiter IV: Unphysical oscillation limiter}
\label{subsec:oscillation_limiter}
The previously described limiters guarantee positivity for $\rho$, $p$, and $k$, but there still may be unphysical oscillations near shocks, rarefactions, or large gradients. We augment the method with one more limiter to eliminate these oscillations: a variant of the strategy developed in Moe et al. \cite{article:Moe15}. This limiter is applied once per time step and can remove unphysical oscillations without overly diffusing the numerical solution. 
We apply the following procedure.

\begin{description}
\item[{\bf Step 1.}] Loop over each element $\Tm_i$ and compute the minimum and maximum values of all of the following variables: $w^\ell \in \left\{\rho, u, p, \heat, r\right\}$:
\begin{equation}
    w^\ell_{M_i} := \max_{\xi\in{\mathbb X}_{\morder}} \left\{ w^\ell\left(\vec{q^{\Delta x}}(\xi )\right)\Bigl|_{\Tm_i} \right\},
     \quad
    w^\ell_{m_i} := \min_{\xi\in{\mathbb X}_{\morder}} \left\{ w^\ell\left(\vec{q^{\Delta x}}(\xi )\right)\Bigl|_{\Tm_i} \right\},
\end{equation}
for all $\ell=1,2,3,4,5$. Here ${\mathbb X}_{\morder}$ is taken to be the $\morder$ roots of the $\morder^{\text{th}}$ 
Legendre polynomial (i.e., Gauss-Legendre points) plus the element ends points (see
\cref{eqn:space_pos_points}).

\medskip

\item[{\bf Step 2.}] Compute upper and 
lower bounds over all neighborhoods, $N_{\Tm_i} := \left\{ \Tm_{i-1}, \Tm_{i}, \, \Tm_{i+1}\right\}$:
\begin{equation}
\label{eqn:limiter_min_max_bounds}
\begin{split}
    {M^{\ell}_i} &= {\max \left\{ \overbar{w}^{\ell}_i + {\mathcal A}_0 h^{1.5}, \, \displaystyle\max_{j \in N_{\Tm_i}}
     \left\{ w^{\ell}_{M_j}\right\} \right\}}, \\
     {m^{\ell}_i} &= {\min\left\{\overbar{w}^{\ell}_i-{\mathcal A}_0 h^{1.5}, \,  \displaystyle\min_{j \in N_{\Tm_i}}
     \left\{w^{\ell}_{m_j}\right\} \right\}},
     \end{split}
 \end{equation}
 where $\overbar{w}^{\ell}_i$ are the element averages of each variable, and ${\mathcal A}_0 h^{1.5}$ is used to offset these averages
 to recover high-order accuracy for smooth solutions in the limit $h\rightarrow 0$ (see Moe et al. \cite{article:Moe15} for more details).
 
 \medskip
 
\item[{\bf Step 3.}] On each element $\Tm_i$, compute the largest damping parameters between $[0,1]$ that guarantee that the high-order solution in $\Tm_i$
does not violate the maximum and minimum bounds defined by \eqref{eqn:limiter_min_max_bounds}:
 \begin{equation}
 	\theta = \min\left\{ 1, \, \mu \cdot \min_{\ell} \left\{  \frac{ M^{\ell}_i-\bar{w}^{\ell}_i }{ w^{\ell}_{M_i}-\bar{w}^{\ell}_i } \right\}, \, \mu \cdot \min_{\ell} \left\{  \frac{ m^{\ell}_i-\bar{w}^{\ell}_i }{ w^{\ell}_{m_i}-\bar{w}^{\ell}_i } \right\} \right\},
 \end{equation}
 where the factor $\mu = 10/11$ is introduced to slightly increase the aggressiveness of the limiter (again, see Moe et al. \cite{article:Moe15} for more details).
 
 \medskip
 
 \item[{\bf Step 4.}] On each element $\Tm_i$, limit the conserved variables:
 \begin{equation}
     \vec{Q^{n+1}_{i \, (\ell,:)}} \leftarrow \theta \, \vec{Q^{n+1}_{i \, (\ell,:)}} \quad \forall \ell=2,\ldots,\mcorr.
 \end{equation}
 %
\end{description}

\section{Collisionless HyQMOM numerical examples}
\label{sec:examples}

In this section, we apply the proposed scheme and the corresponding limiters to several test cases.
In \S\ref{sub:conv_test}, we verify the claimed orders of accuracy on a smooth exact solution. In \S\ref{sub:ex1} and \S\ref{sub:ex2} we apply the scheme to {\it shock tube} initial data. These results demonstrate the ability of the
non-oscillatory limiter to control unphysical oscillations.
Finally, in \S\ref{sub:vacuum}, we fully validate the positivity limiters by applying the scheme to piecewise constant initial data that lead to the formation of a vacuum.
This example demonstrates the ability of the positivity limiters to prevent negative states in density, pressure, and modified kurtosis, both on the element average and on the solution values internal to the element. 
In all the cases presented in \S\ref{sub:ex1},
\S\ref{sub:ex2}, and \S\ref{sub:vacuum}, we compare the high-order scheme against a  highly-resolved first-order Rusanov scheme that is guaranteed to be positivity-preserving without the need for any limiters.

\subsection{Smooth solution convergence test}
\label{sub:conv_test}
Consider the following exact solution to the 1D HyQMOM system
\eqref{eqn:HyQMOM_sys1}--\eqref{eqn:HyQMOM_sys2} with periodic boundary conditions on $x \in \bigl[-1,1\bigr]$:
\begin{equation}
\begin{split}
\rho(t,x)&=2+\sin(2\pi(x-t)), \\
 u(t,x)=1,\quad  p(t,x)&=2, \quad
\heat(t,x)=4, \quad k(t,x)=8 - 4 \left[{\rho(t,x)}\right]^{-1}.
\end{split}
\end{equation}
The numerical solution is computed with grid resolutions of
\begin{equation}
\melems=10\times2^{\ell}, \quad \text{for} \quad \ell=0,1,2,3,4,5,
\end{equation}
up to a final time of $t=1$. We verify the order of accuracy
for the schemes with orders of accuracy $\morder=2,3,4$. 

The errors we report are based on the following error measure:
\begin{equation}
\sum_{\ell=1}^{\meq} \frac{\Bigl\|f_{\ell}-g_{\ell}\Bigr\|_{\scriptstyle  L^2[-1,1]}}{\Bigl\|g_{\ell}\Bigr\|_{\scriptstyle L^2[-1,1]}} =\sum_{\ell=1}^{\meq}\sqrt{\frac{\displaystyle\int_{-1}^1\Bigl\vert f_\ell(x)-g_\ell(x)\Bigr\vert^2\,dx}{\displaystyle\int_{-1}^1\Bigl\vert g_\ell(x)\Bigr\vert^2\,dx}},
\end{equation}
where $\vec{f}(x):  [-1,1] \mapsto \reals^{\meq}$ 
is the approximate solution and 
$\vec{g}(x):  [-1,1] \mapsto \reals^{\meq}$ is the
exact solution. 
In practice, however, we replace the exact solution with a piecewise Legendre polynomial approximation of degree $\morder+1$ on the computational mesh. Repeated
use of the orthonormality of the Legendre basis functions yields the following (approximate) relative error on a
mesh with $N$ elements and a numerical method of order $\morder$:
\begin{equation}
\label{eqn:error}
e_N:= \sum_{\ell=1}^{\meq}\sqrt{\frac{\displaystyle\sum_{i=1}^N\left\{\sum_{j=1}^{M_C}\Bigl(Q_{i(j,\ell)}-Q^\star_{i(j,\ell)}\Bigr)^2+\Bigl(Q^\star_{i(M_C+1,\ell)}\Bigr)^2\right\}}{\displaystyle\sum_{i=1}^N\sum_{j=1}^{M_C+1}\Bigl(Q^\star_{i(j,\ell)}\Bigr)^2}},
\end{equation}
where $Q$ and $Q^\star$ are the Legendre coefficients of the numerical and exact solutions at the final time, respectively. The exact solution coefficients are computed using Gaussian quadrature with $20$ quadrature points per element:
\begin{equation}
\vec{Q^\star_{i(k,:)}}:=\frac{1}{2}\sum_{a=1}^{20}\omega_a^\star \, \phi_k\left(\mu_a^\star\right) \vec{q}^{\star}\left(t=1, \, x_i+\frac{\Delta x}{2}\mu_a^\star\right),
\end{equation}
where $\omega_a^\star$ and $\mu_a^\star$ for $a=1,\dots,20$ are the weights and abscissas of the $20$ point quadrature rule, and $\vec{q}^{\star}$ is the exact solution. Gaussian quadrature rules have been tabulated in many books and websites; we obtained our data from \cite{web:gauss_quad}. 

The errors as defined by \eqref{eqn:error}, as well as the base-2 logarithms of the ratio of consecutive errors,
\begin{equation}
\label{eqn:error_ratio}
\log_2 \left( \frac{e_{N/2}}{e_{N}} \right) \approx
\log_2 \left( \frac{\left(N/2\right)^{-\morder}}{N^{-\morder}} \right) = 
\log_2 \left( 2^{\morder} \right) =  \morder,
\end{equation}
are shown in Tables \ref{table:conv_test_nolimiters} (all limiters are turned off) 
and \ref{table:conv_test_fulllimiters} (all limiters are turned on).  
For the simulations that result in Table \ref{table:conv_test_fulllimiters}, 
none of the three positivity limiters (i.e., Limiters I, II, and III) are active because the solution is far away from positivity violations. In Table \ref{table:conv_test_fulllimiters}, the 
values affected by Limiter IV are highlighted in \textcolor{red}{red}.
Note that at low resolutions, Limiter IV is active, and the results in Tables \ref{table:conv_test_nolimiters} and \ref{table:conv_test_fulllimiters} differ slightly, but that at higher resolutions, the effect of the limiter disappears.
Note that for all the simulations with limiters turned on, we used the value of ${\mathcal A}_0 = 5$ in formula \cref{eqn:limiter_min_max_bounds}.

\begin{table}
\begin{center}
\begin{Large}
\begin{tabular}{|c||c|c||c|c||c|c|}
\hline
{\normalsize $N$} & {\normalsize $M_{\text{O}}=2$} & {\normalsize Eq. \eqref{eqn:error_ratio}}  & {\normalsize $M_{\text{O}}=3$} & {\normalsize Eq. \eqref{eqn:error_ratio}}  & {\normalsize $M_{\text{O}}=4$} & {\normalsize Eq. \eqref{eqn:error_ratio}} \\
\hline\hline
{\normalsize 10} & {\normalsize 1.143e-01} & -- & {\normalsize 1.171e-02} & -- & {\normalsize 4.924e-03} & -- \\\hline
{\normalsize 20} & {\normalsize 2.005e-02} & {\normalsize $2.511$} & {\normalsize 2.260e-03} & {\normalsize $2.374$} & {\normalsize 4.617e-04} & {\normalsize $3.415$} \\\hline
{\normalsize 40} & {\normalsize 3.759e-03} & {\normalsize $2.415$} & {\normalsize 4.032e-04} & {\normalsize $2.487$} & {\normalsize 5.337e-06} & {\normalsize $6.435$} \\\hline
{\normalsize 80} & {\normalsize 8.802e-04} & {\normalsize $2.095$} & {\normalsize 6.077e-05} & {\normalsize $2.730$} & {\normalsize 1.962e-07} & {\normalsize $4.765$} \\\hline
{\normalsize 160} & {\normalsize 2.192e-04} & {\normalsize $2.006$} & {\normalsize 8.127e-06} & {\normalsize $2.903$} & {\normalsize 1.203e-08} & {\normalsize $4.028$} \\\hline
{\normalsize 320} & {\normalsize 5.485e-05} & {\normalsize $1.999$} & {\normalsize 1.040e-06} & {\normalsize $2.966$} & {\normalsize 7.500e-10} & {\normalsize $4.004$} \\\hline
\end{tabular} 
\caption{(\S\ref{sub:conv_test}: smooth solution convergence test with limiters turned off) 
Relative $L^2$ errors for the HyQMOM equations with variable density, constant fluid velocity, pressure, heat flux,  and fourth primitive moment, and periodic boundary conditions. In these simulations all four limiters were turned off.
\label{table:conv_test_nolimiters}}
\end{Large}
\end{center}
\end{table}

\begin{table}
\begin{center}
\begin{Large}
\begin{tabular}{|c||c|c||c|c||c|c|}
\hline
{\normalsize $N$} & {\normalsize $M_{\text{O}}=2$} & {\normalsize Eq. \eqref{eqn:error_ratio}}  & {\normalsize $M_{\text{O}}=3$} & {\normalsize Eq. \eqref{eqn:error_ratio}}  & {\normalsize $M_{\text{O}}=4$} & {\normalsize Eq. \eqref{eqn:error_ratio}} \\
\hline\hline
{\normalsize 10} & \textcolor{red}{\normalsize 3.154e-01} & -- & \textcolor{red}{\normalsize 5.360e-02} & -- & {\normalsize 4.924e-03} & -- \\\hline
{\normalsize 20} & \textcolor{red}{\normalsize 4.887e-02} & {\normalsize $2.690$} & {\normalsize 2.260e-03} & {\normalsize $4.568$} & {\normalsize 4.617e-04} & {\normalsize $3.415$} \\\hline
{\normalsize 40} & {\normalsize 3.759e-03} & {\normalsize $3.700$} & {\normalsize 4.032e-04} & {\normalsize $2.487$} & {\normalsize 5.337e-06} & {\normalsize $6.435$} \\\hline
{\normalsize 80} & {\normalsize 8.802e-04} & {\normalsize $2.095$} & {\normalsize 6.077e-05} & {\normalsize $2.730$} & {\normalsize 1.962e-07} & {\normalsize $4.765$} \\\hline
{\normalsize 160} & {\normalsize 2.192e-04} & {\normalsize $2.006$} & {\normalsize 8.127e-06} & {\normalsize $2.903$} & {\normalsize 1.203e-08} & {\normalsize $4.028$} \\\hline
{\normalsize 320} & {\normalsize 5.485e-05} & {\normalsize $1.999$} & {\normalsize 1.040e-06} & {\normalsize $2.966$} & {\normalsize 7.500e-10} & {\normalsize $4.004$} \\\hline
\end{tabular} 
\caption{(\S\ref{sub:conv_test}: smooth solution convergence test with limiters turned on) 
Relative $L^2$ errors for the one-dimensional HyQMOM equations with variable density, constant fluid velocity, pressure, heat flux,  and fourth primitive moment, and periodic boundary conditions. In these simulations, all four limiters were turned on. 
\label{table:conv_test_fulllimiters}}
\end{Large}
\end{center}
\end{table}

\subsection{Shock tube problem \#1}
\label{sub:ex1}
Consider the Riemann problem for  \eqref{eqn:HyQMOM_sys1}--\eqref{eqn:HyQMOM_prim} with the following initial data at $t=0$:
\begin{equation}
\label{eqn:shock1_init}
\bigl( \rho, u, p, \heat, k \bigr)(t=0,x) =
\begin{cases}
\bigl(1.5, \, -0.5, \, 1.5, \, 1.0, \, 2.3\bar{3}\bigr) \quad x < 0, \\
\bigl(1.0, \, -0.5, \, 1.0, \, 0.5, \, 1.75\bigr) \quad x > 0,
\end{cases}
\end{equation}
on $x\in[-1.2, 1.2]$ with extrapolation boundary
conditions. 

Shown in Figure \ref{fig:shocktube1} are results from a simulation run with 
two distinct methods: (1) the $\morder=4$ scheme
with 200 elements and full limiters (shown as blue dots), and (2)
the first-order Rusanov  scheme
with 20,000 elements (shown as a solid red line). For the
$\morder=4$ scheme, we are plotting four points per element in order
to show the intra-element solution structure. 
The panels show the primitive variables: (a) density: $\rho(t,x)$, (b) macroscopic velocity: $u(t,x)$, (c) pressure: $p(t,x)$, (d) heat flux: $\heat(t,x)$, (e) modified kurtosis: $k(t,x)$, and (f) primitive fourth-moment: $r(t,x)$. Note that we used the value of ${\mathcal A}_0 = 5$ in formula \cref{eqn:limiter_min_max_bounds}.
These results clearly demonstrate the non-oscillatory limiters' ability to adequately control unphysical oscillations and produce accurate solutions.

\begin{figure}
\begin{tabular}{cc}
(a)\includegraphics[width=.44\linewidth]{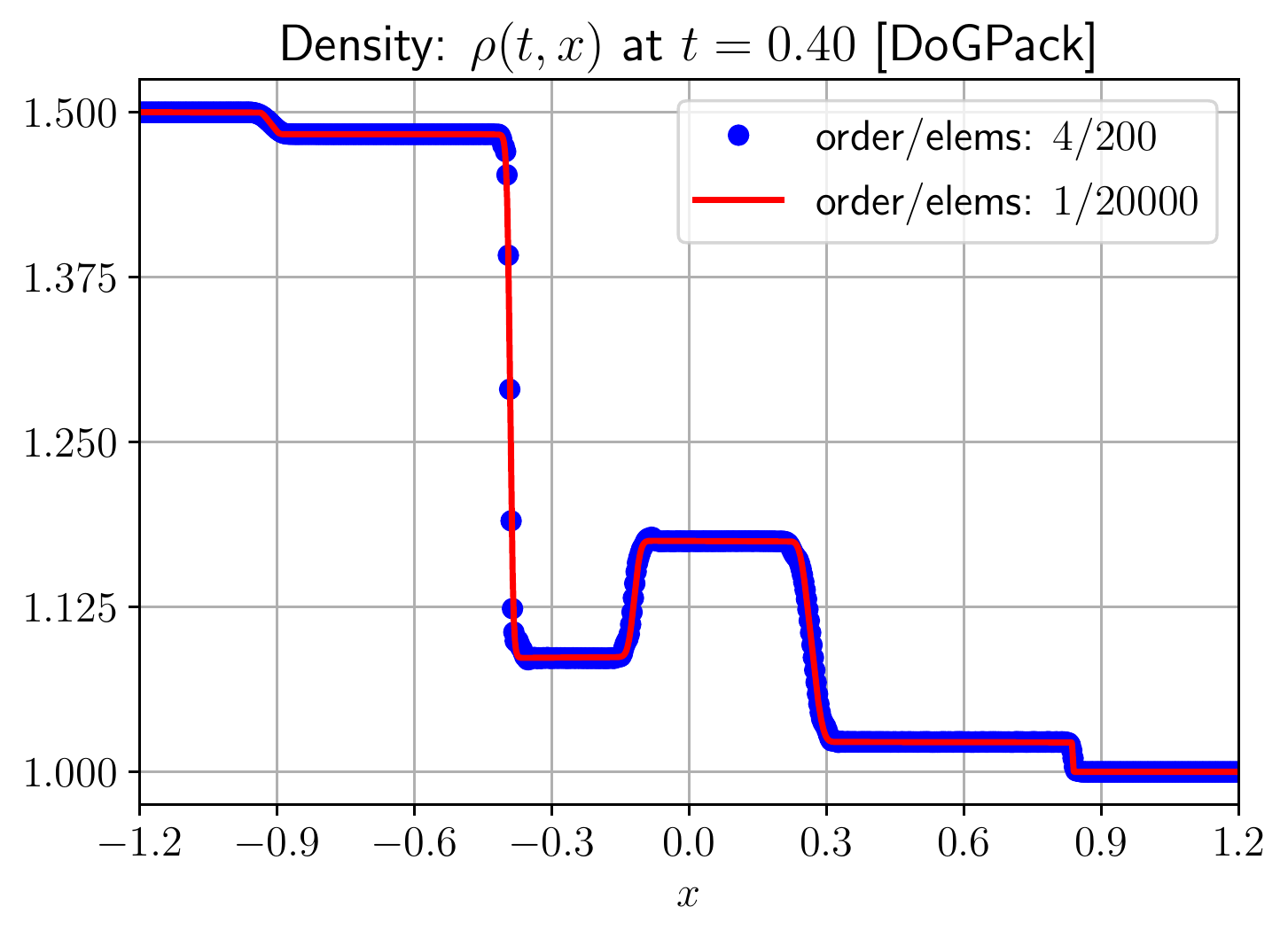} &
(b)\includegraphics[width=.44\linewidth]{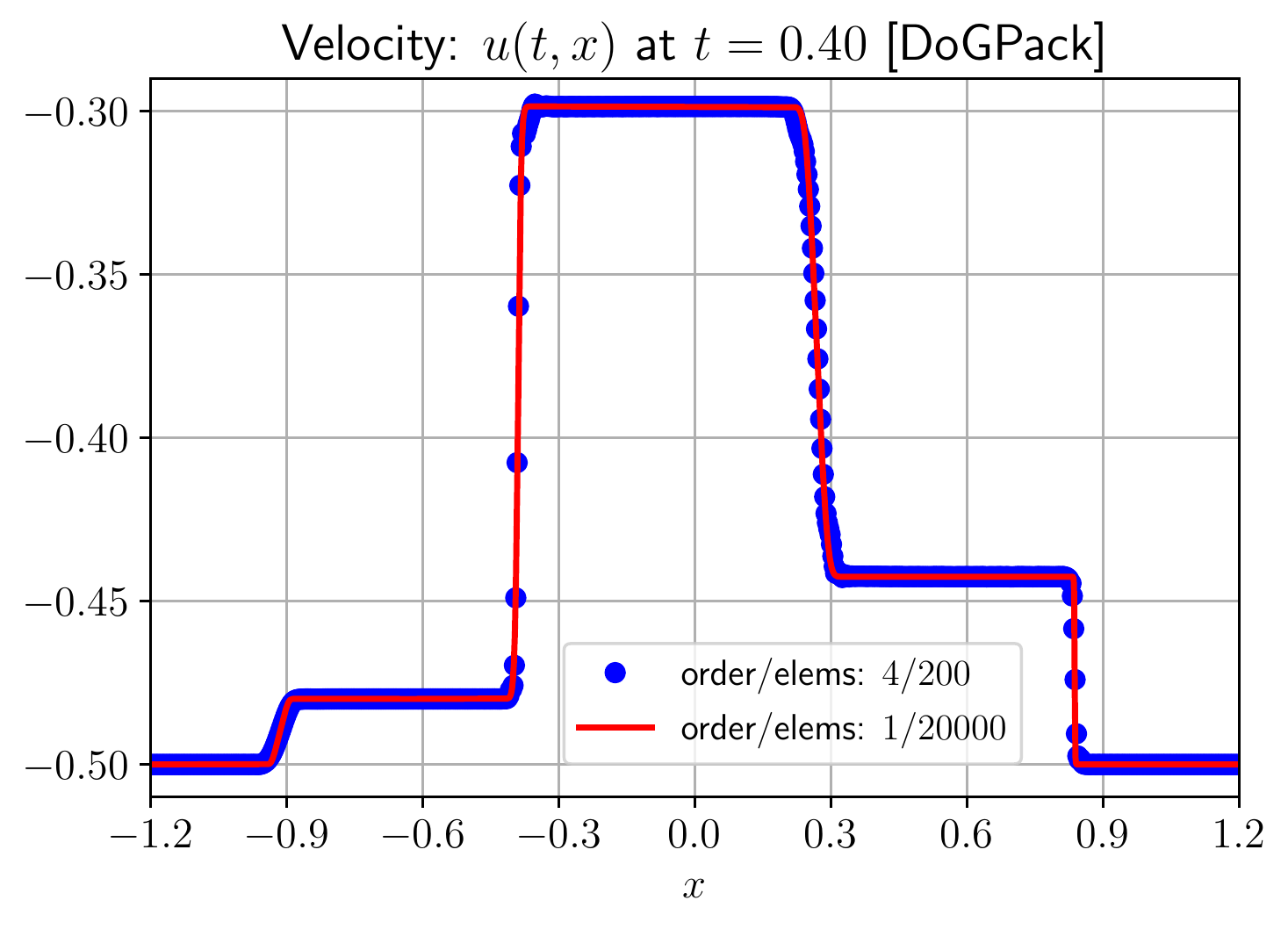} \\
(c)\includegraphics[width=.44\linewidth]{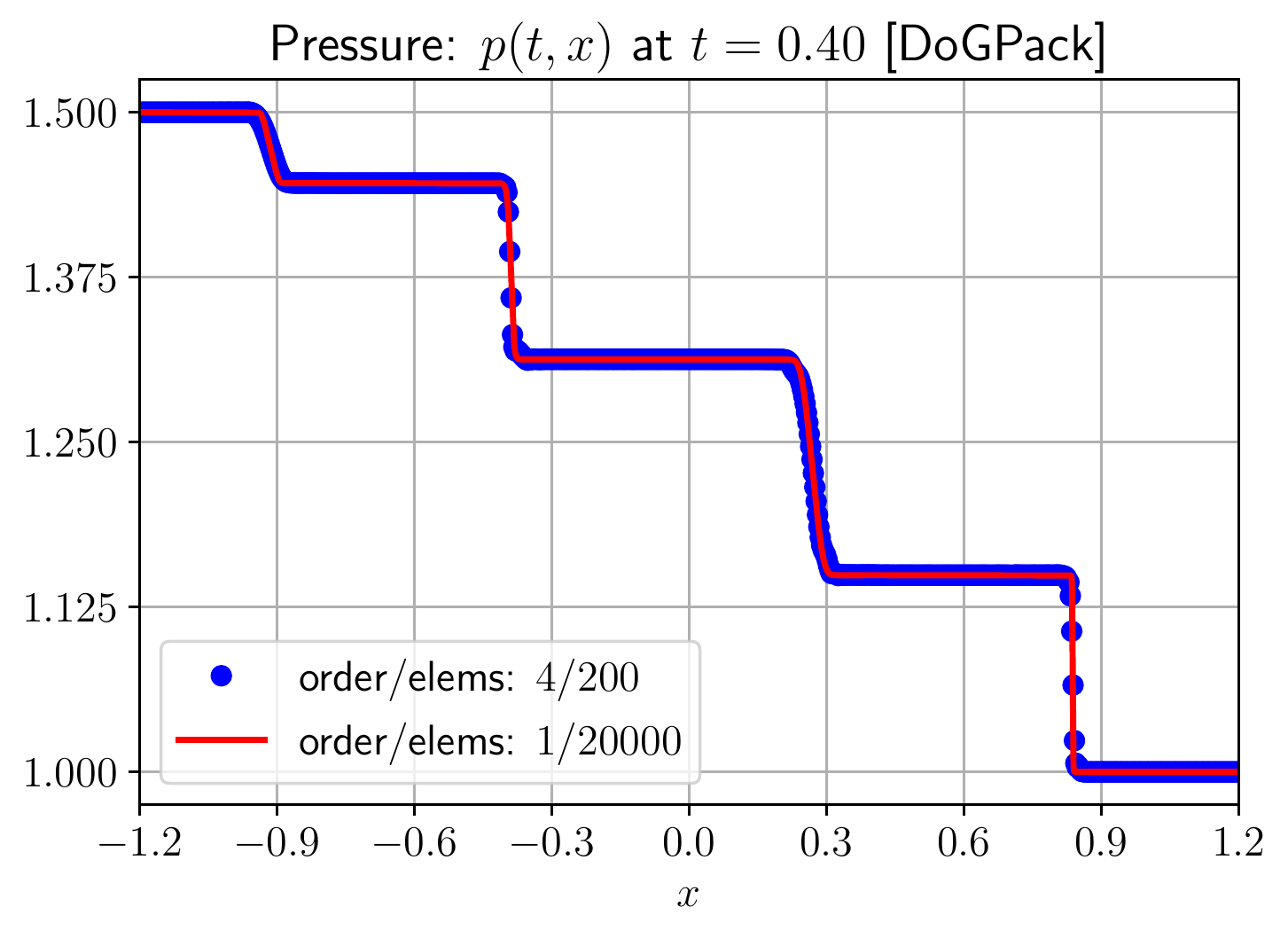} &
(d)\includegraphics[width=.44\linewidth]{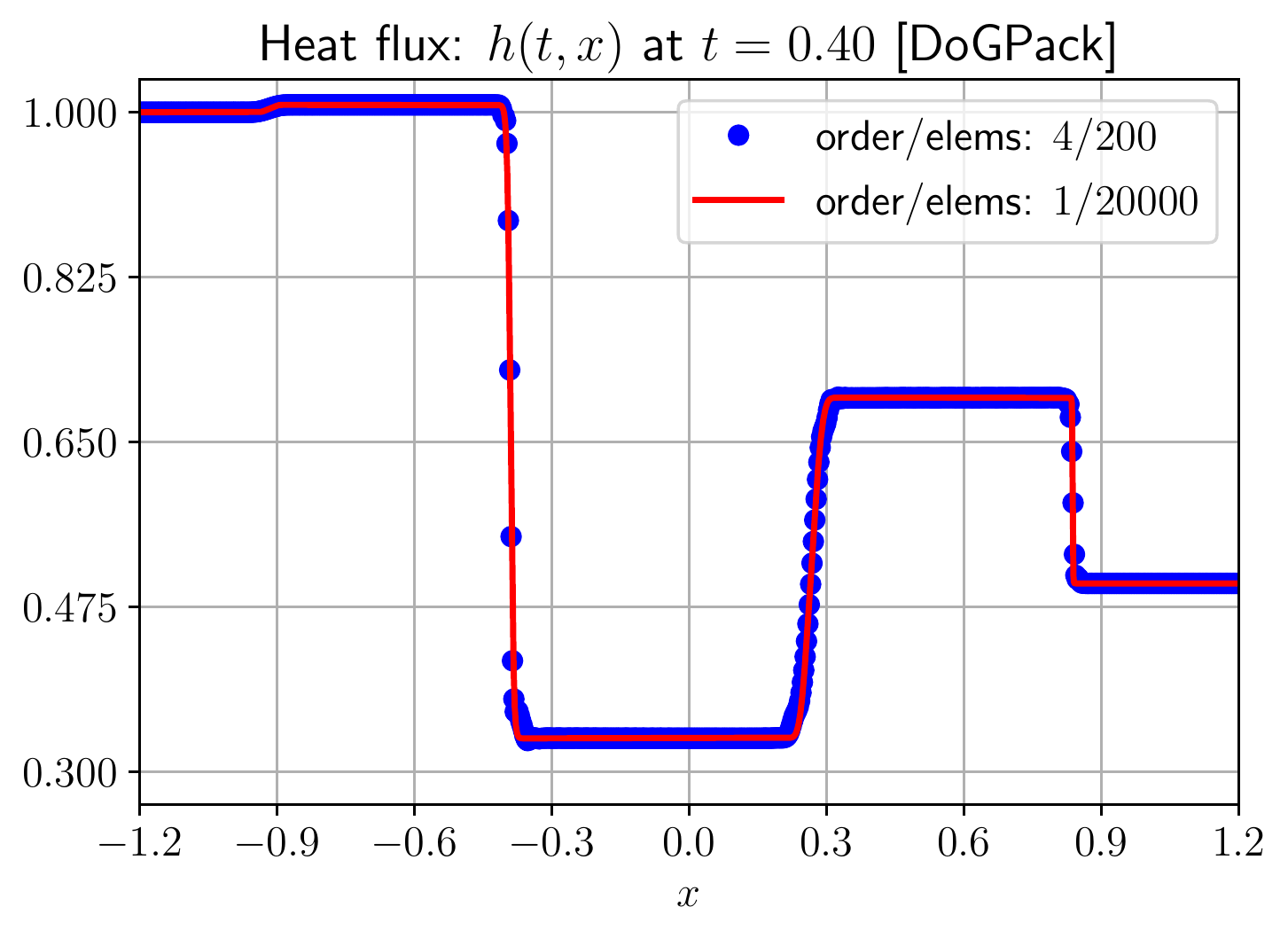} \\
(e)\includegraphics[width=.44\linewidth]{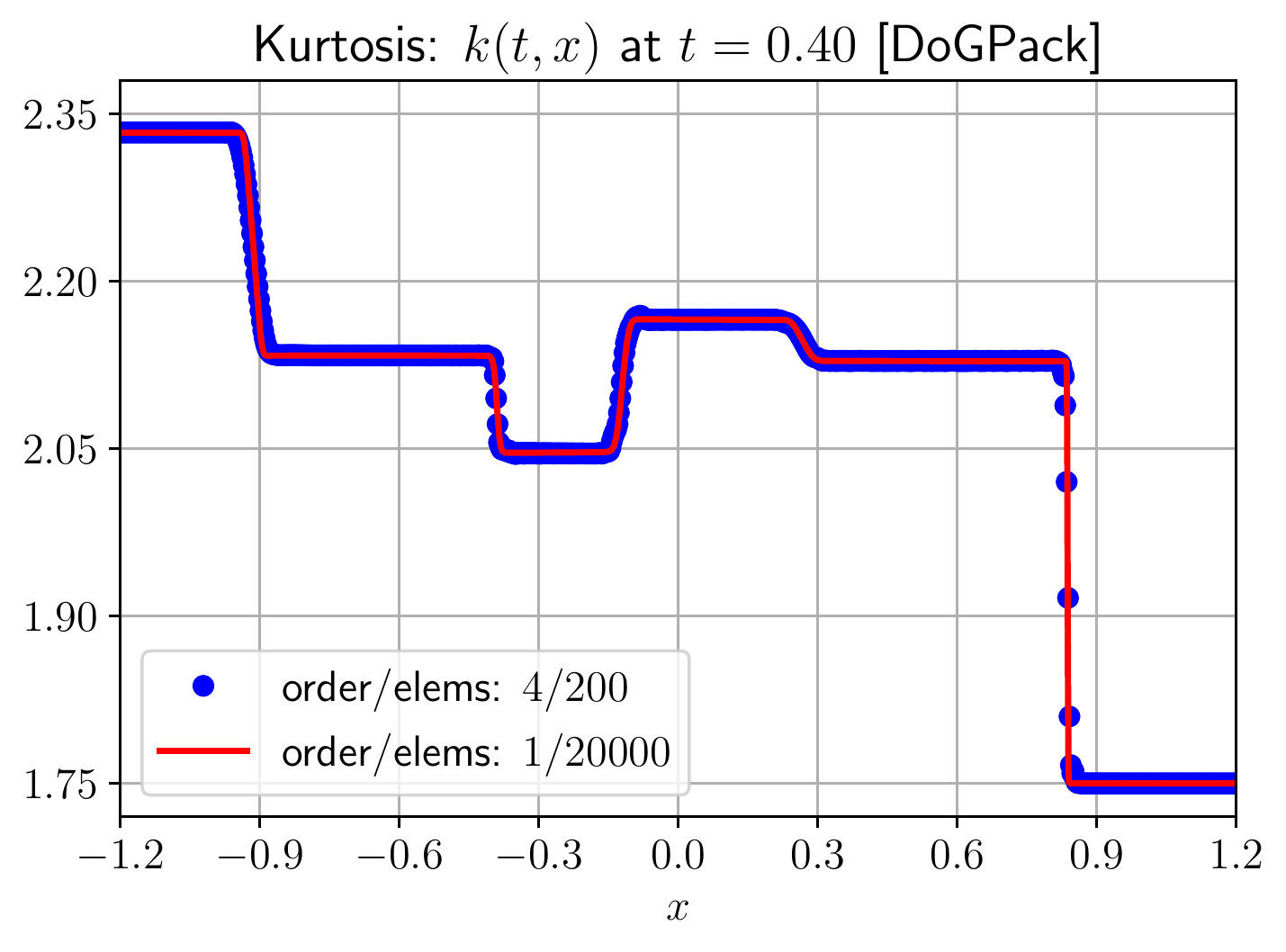} &
(f)\includegraphics[width=.44\linewidth]{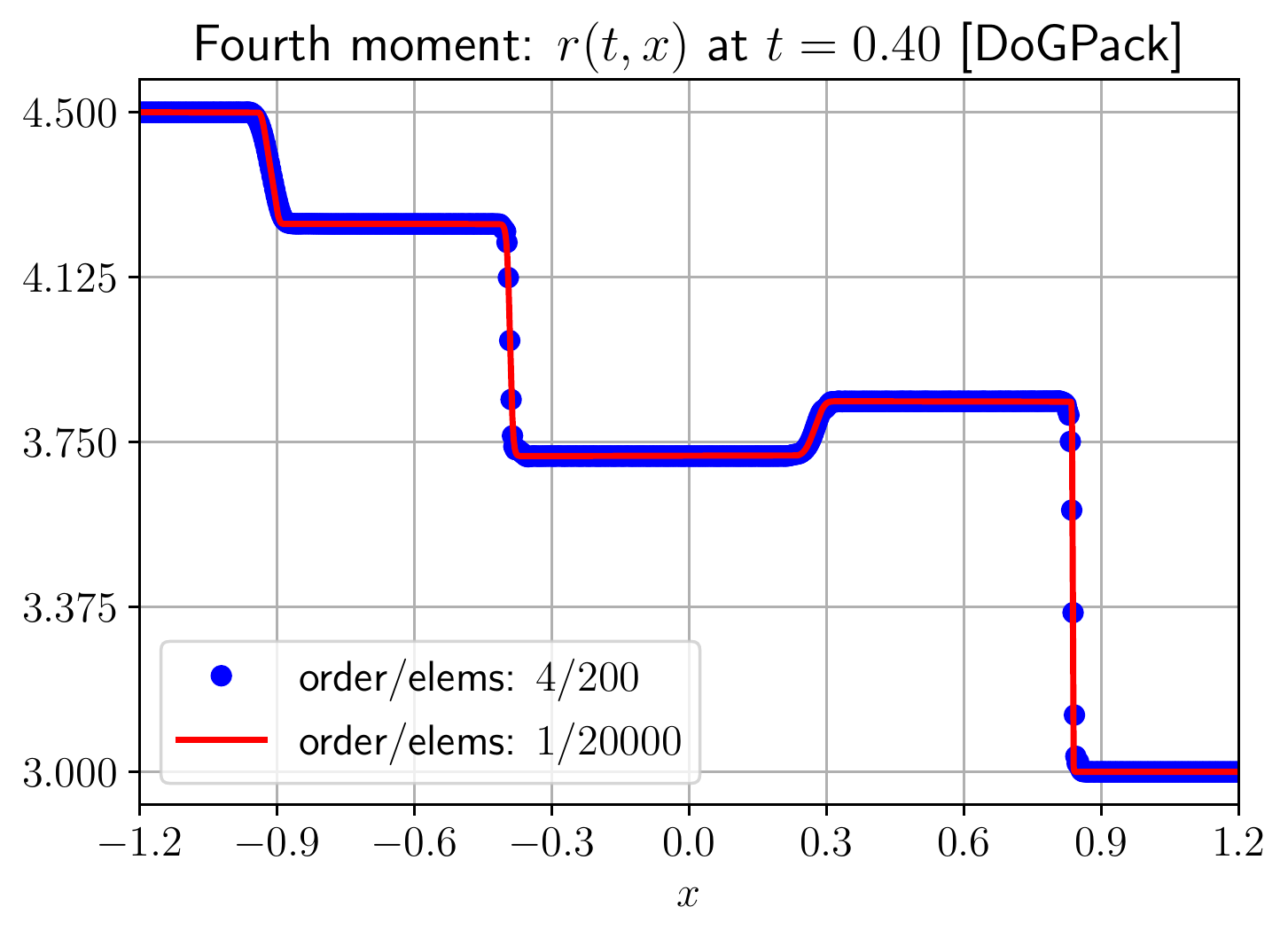}
\end{tabular}
\caption{(\S\ref{sub:ex1}: shock tube problem \#1) 
Numerical solution of shock tube problem \#1
on $x \in [-1.2, 1.2]$ with initial conditions given
by \eqref{eqn:shock1_init}. 
Shown are the results from a simulation run with 
two distinct methods: (1) the $\morder=4$ scheme
with 200 elements and full limiters (shown as blue dots), and (2)
the first-order Rusanov  scheme
with 20,000 elements (shown as a solid red line). For the
$\morder=4$ scheme, we are plotting four points per element in order
to show the intra-element solution structure. 
The panels show the primitive variables: (a) density: $\rho(t,x)$, (b) macroscopic velocity: $u(t,x)$, (c) pressure: $p(t,x)$, (d) heat flux: $\heat(t,x)$, (e) modified kurtosis: $k(t,x)$, and (f) primitive fourth-moment: $r(t,x)$.}
\label{fig:shocktube1}
\end{figure}

\subsection{Shock tube problem \#2}
\label{sub:ex2}
Consider the Riemann problem for  \eqref{eqn:HyQMOM_sys1}--\eqref{eqn:HyQMOM_prim} with the following initial data at $t=0$:
\begin{equation}
\label{eqn:shock2_init}
\bigl(\rho, u, p, \heat, k \bigr)(t=0,x) = 
\begin{cases}
\bigl(1.0, \, -0.7, \, 1.5, \, 1.5, \, 1.75 \bigr) & \quad x < 0, \\
\bigl(0.5, \, -0.9, \, 1.0, \, 1.0, \, 1.0 \, \bigr) & \quad x > 0,
\end{cases}
\end{equation}
on $x\in[-1.2, 1.2]$ with extrapolation boundary
conditions. 

Shown in Figure \ref{fig:shocktube2} are results from a simulation run with 
two distinct methods: (1) the $\morder=4$ scheme
with 200 elements and full limiters (shown as blue dots), and (2)
the first-order Rusanov  scheme
with 20,000 elements (shown as a solid red line). For the
$\morder=4$ scheme, we are plotting four points-per-element in order
to show the intra-element solution structure. 
The panels show the primitive variables: (a) density: $\rho(t,x)$, (b) macroscopic velocity: $u(t,x)$, (c) pressure: $p(t,x)$, (d) heat flux: $\heat(t,x)$, (e) modified kurtosis: $k(t,x)$, and (f) primitive fourth-moment: $r(t,x)$. Note that we used the value of ${\mathcal A}_0 = 5$ in formula \cref{eqn:limiter_min_max_bounds}.

Again, just as in the previous example, 
these results demonstrate the ability of the non-oscillatory limiters to adequately control unphysical oscillations and produce accurate solutions.

\begin{figure}
\begin{tabular}{cc}
(a)\includegraphics[width=.44\linewidth]{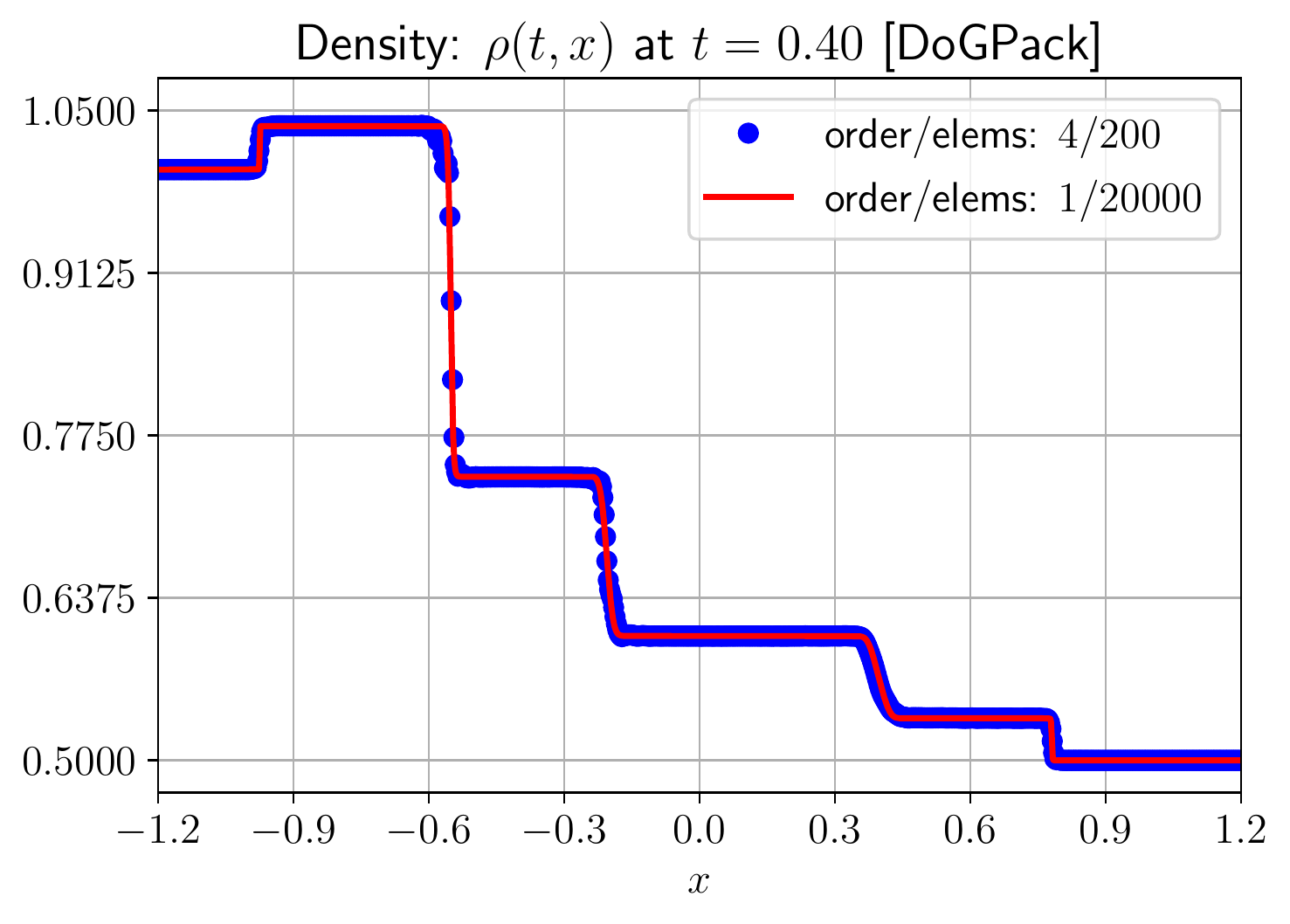} &
(b)\includegraphics[width=.44\linewidth]{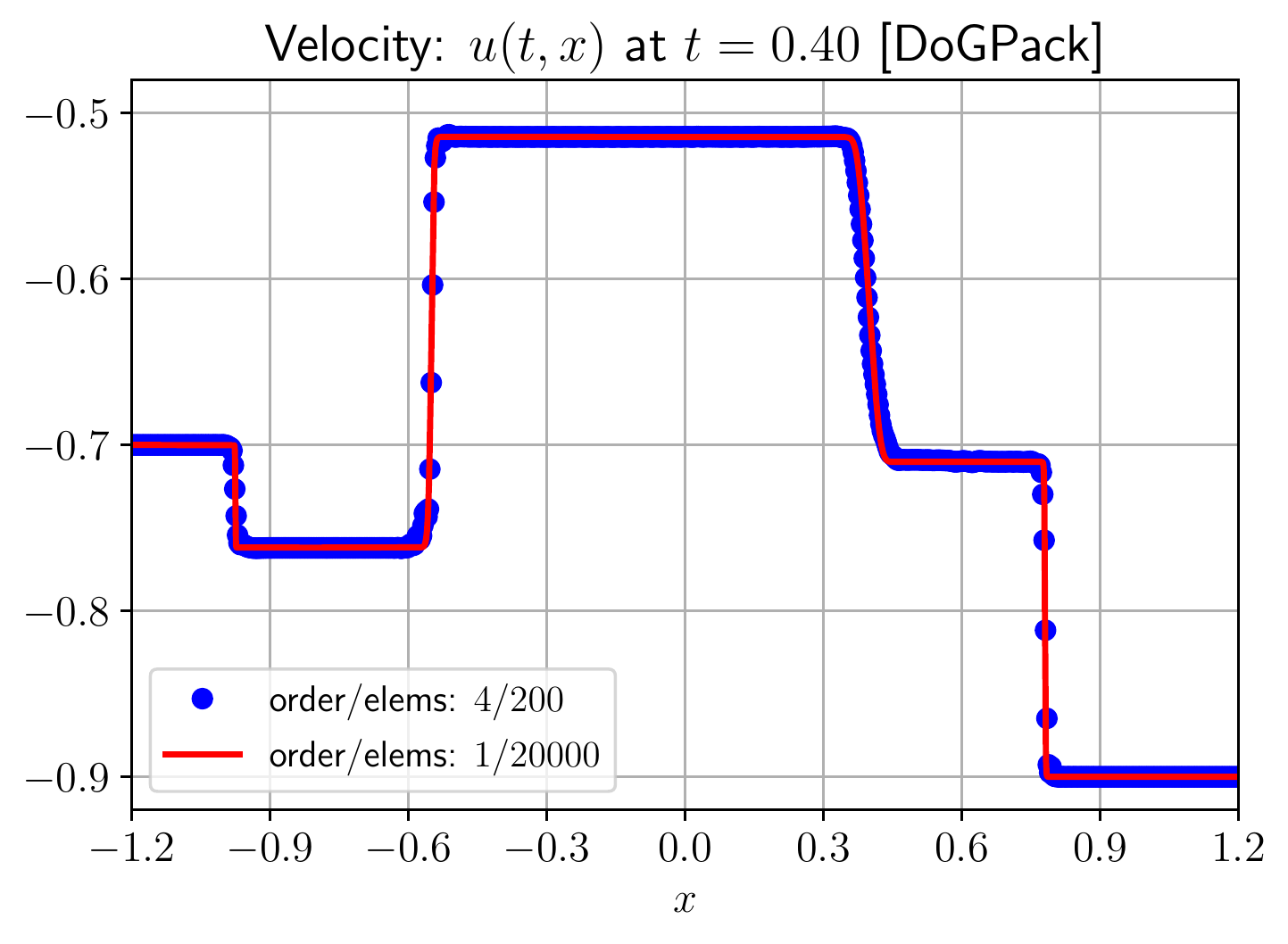} \\
(c)\includegraphics[width=.44\linewidth]{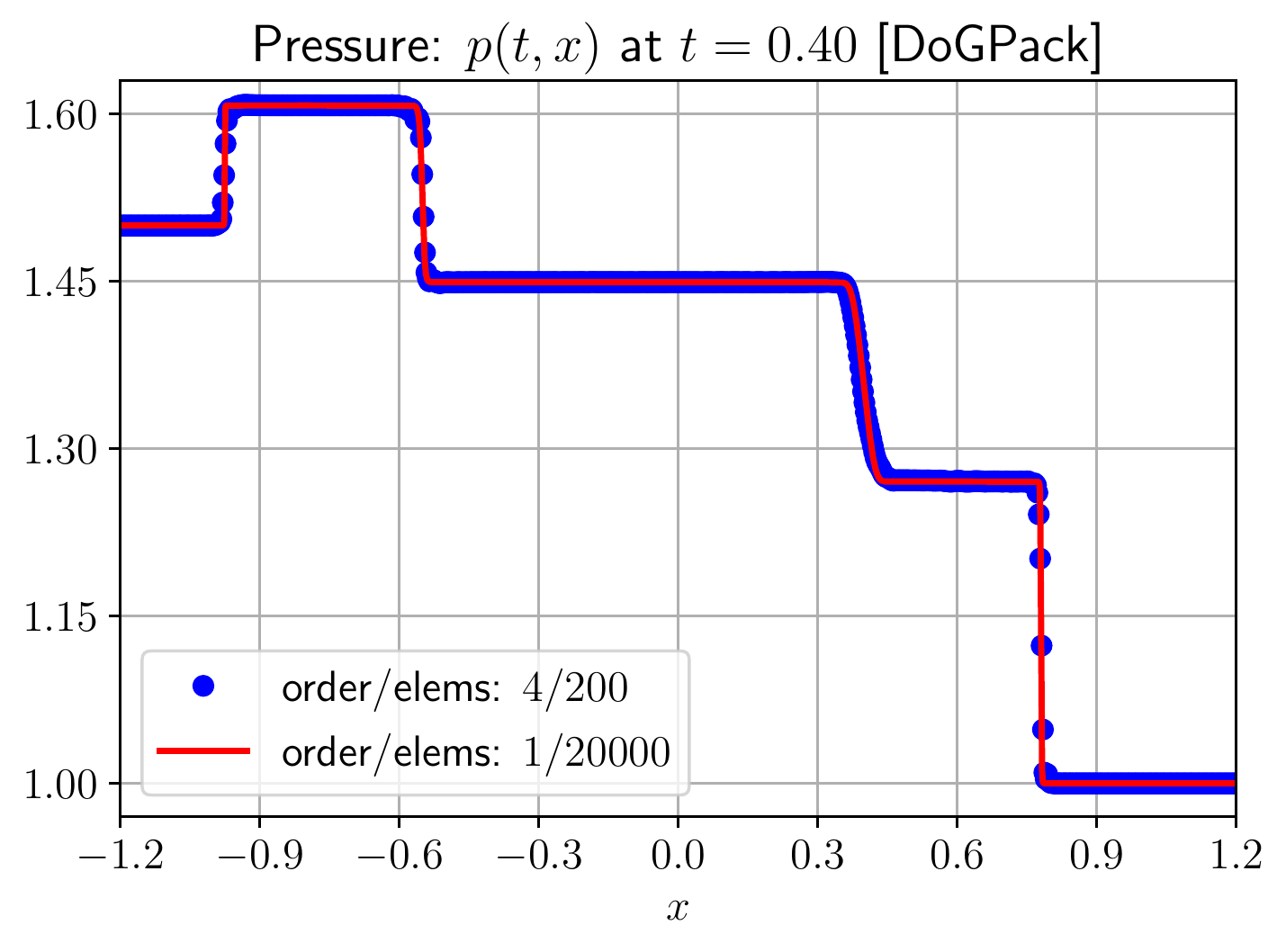} &
(d)\includegraphics[width=.44\linewidth]{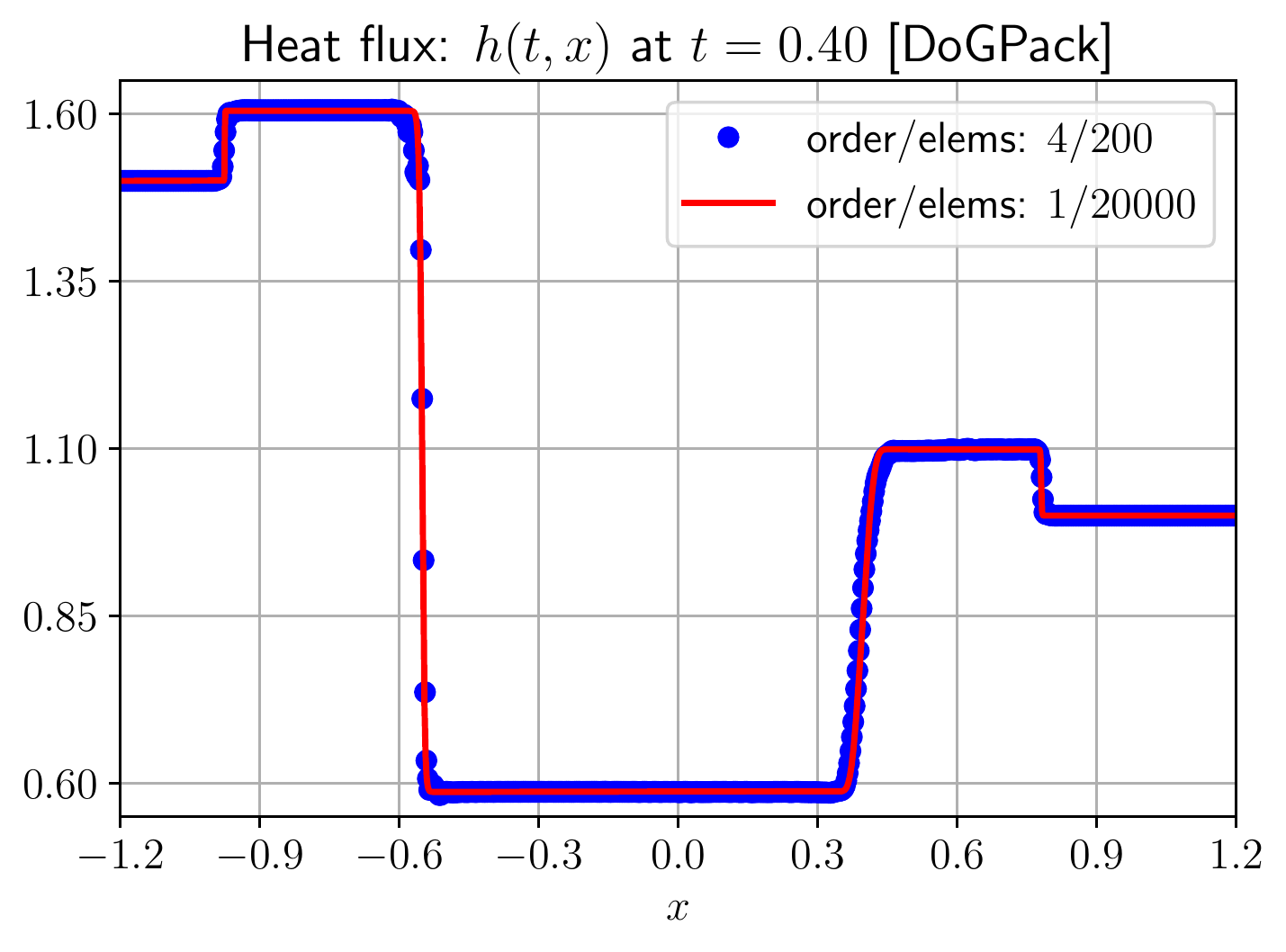} \\
(e)\includegraphics[width=.44\linewidth]{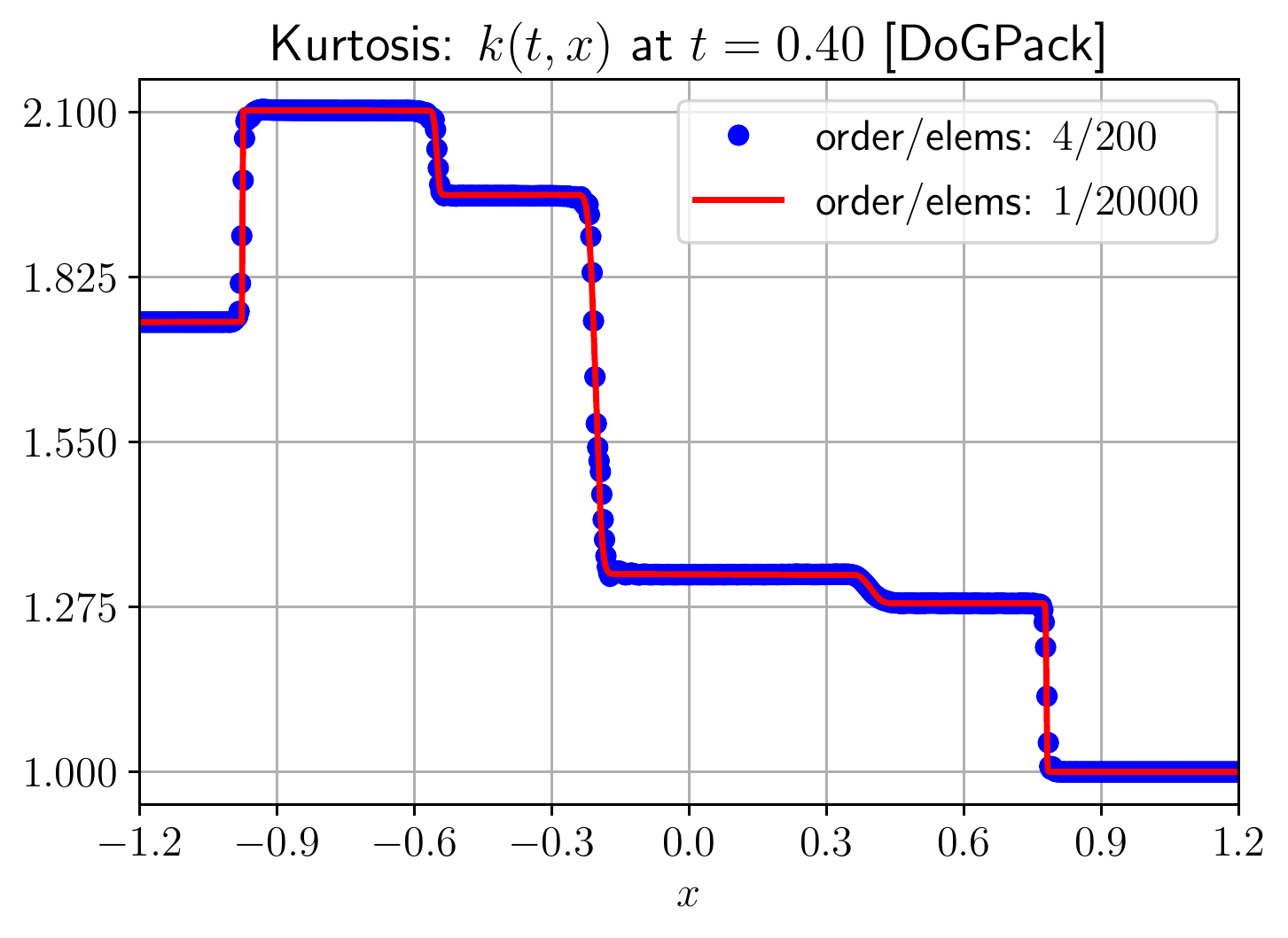} &
(f)\includegraphics[width=.44\linewidth]{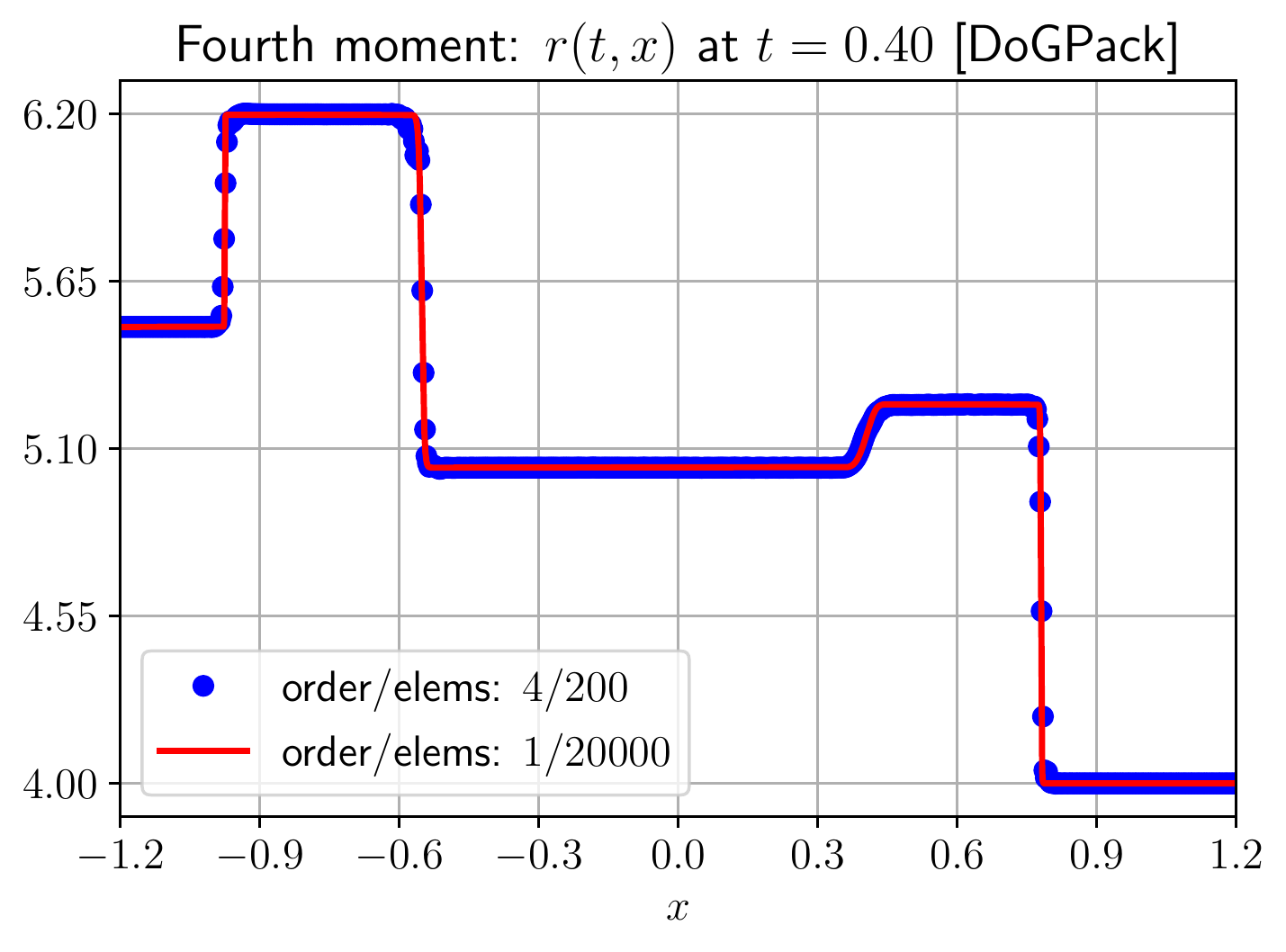}
\end{tabular}
\caption{(\S\ref{sub:ex2}: shock tube problem \#2) Numerical solution of shock tube problem \#2
on $x \in [-1.2, 1.2]$ with initial conditions given
by \eqref{eqn:shock2_init}. 
Shown are the results from a simulation run with 
two distinct methods: (1) the $\morder=4$ scheme
with 200 elements and full limiters (shown as blue dots), and (2)
the first-order Rusanov  scheme
with 20,000 elements (shown as a solid red line). For the
$\morder=4$ scheme, we are plotting four points per element in order
to show the intra-element solution structure. 
The panels show the primitive variables: (a) density: $\rho(t,x)$, (b) macroscopic velocity: $u(t,x)$, (c) pressure: $p(t,x)$, (d) heat flux: $\heat(t,x)$, (e) modified kurtosis: $k(t,x)$, and (f) primitive fourth-moment: $r(t,x)$.}
\label{fig:shocktube2}
\end{figure}

\subsection{Double rarefaction vacuum problem}
\label{sub:vacuum}
In the final example, we solve a vacuum problem where the right and left initial velocities are large and opposite, creating a vacuum state in the center
of the solution domain.  The initial states are
 \begin{equation}
 \label{eqn:vacuum_init}
\bigl(\rho, u, p, \heat, k \bigr)(t=0,x) = 
\begin{cases}
\bigl(1.0, \, -2.0, \, 1.0, \, 0.0, \, 2.0\bigr) & \quad x < 0, \\
\bigl(1.0, \, +2.0, \, 1.0, \, 0.0, \, 2.0\bigr) & \quad x > 0.
\end{cases}
\end{equation}
The computational domain is $x \in [-1.2, 1.2]$ with extrapolation boundary conditions.

Shown in Figure \ref{fig:vacuum} are results from a simulation run with 
two distinct methods: (1) the $\morder=4$ scheme
with 200 elements and full limiters (shown as blue dots), and (2)
the first-order Rusanov  scheme
with 20,000 elements (shown as a solid red line). For the
$\morder=4$ scheme, we are plotting four points per element in order
to show the intra-element solution structure. 
The panels show the primitive variables: (a) density: $\rho(t,x)$, (b) macroscopic velocity: $u(t,x)$, (c) pressure: $p(t,x)$, (d) heat flux: $\heat(t,x)$, (e) modified kurtosis: $k(t,x)$, and (f) primitive fourth-moment: $r(t,x)$.
Note that we used the value of ${\mathcal A}_0 = 5$ in formula \cref{eqn:limiter_min_max_bounds}.

We comment on two important findings from this simulation.
First, this example demonstrates the ability of the positivity limiters to prevent negative states in density, pressure, and modified kurtosis, both on the element average and the solution values internal to the element. In this simulation, all three variables, $\rho$, $p$, and $k$, become very small, but all stay strictly above zero. Because all three remain strictly positive, the moments remain realizable, and the numerical simulation remains nonlinear stable.
Second, while the simulation results from the $\morder=4$ scheme with 200 elements do show some differences in the vacuum region with the highly resolved Rusanov solution, especially in the density plot shown in Figure \ref{fig:vacuum}(a), the solution remains qualitatively correct.
We can investigate this further by increasing the grid resolution; in Figure \ref{fig:vacuum_converge} we show the density plots at different grid resolutions: (a) $N=200$,
(b) $N=400$, (c) $N=800$, and (b) $N=1600$. These results verify that the differences between
the $\morder=4$ scheme and the highly resolved Rusanov scheme disappear at higher resolutions.

\begin{figure}
\begin{tabular}{cc}
(a)\includegraphics[width=.44\linewidth]{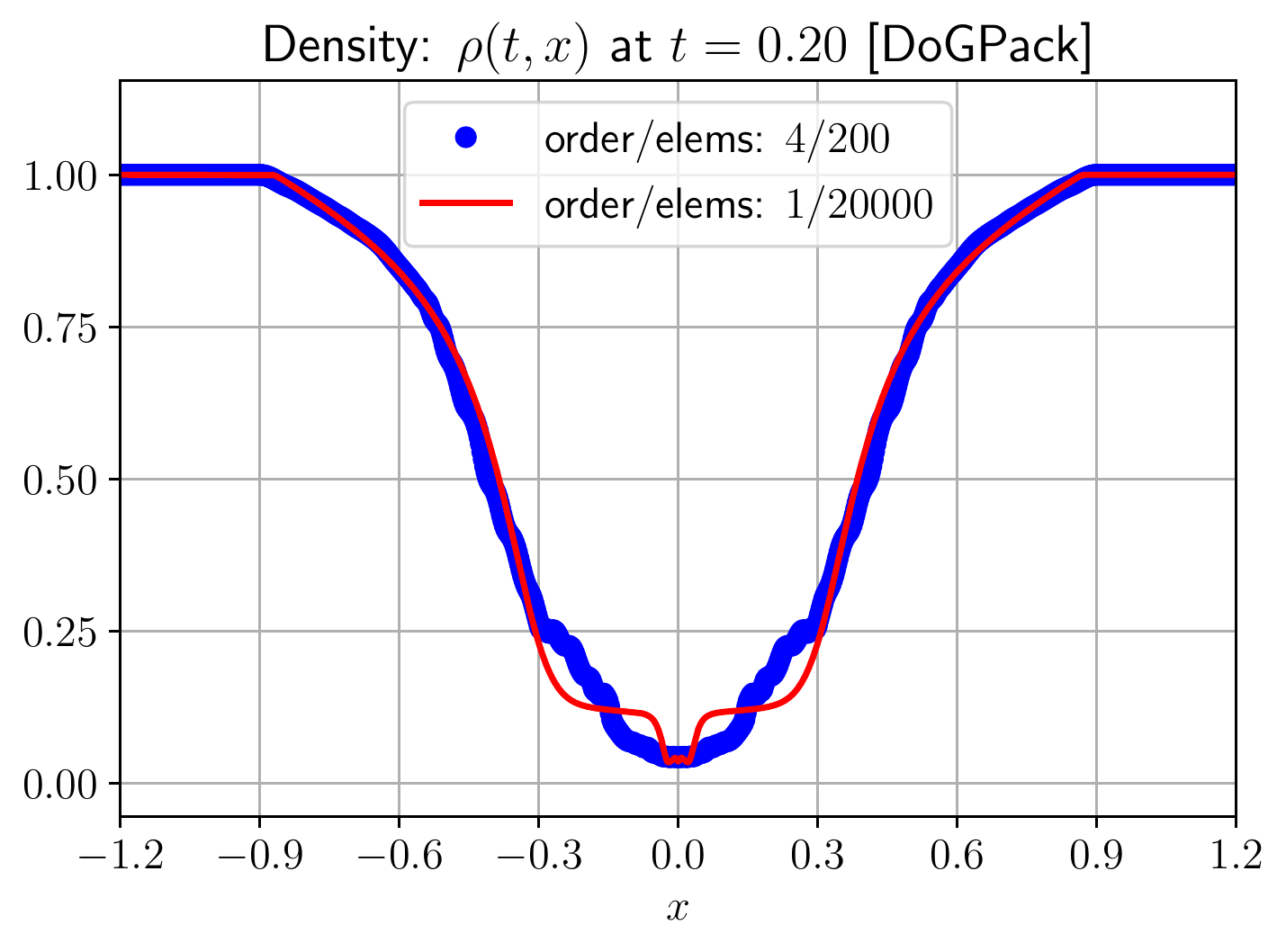} &
(b)\includegraphics[width=.44\linewidth]{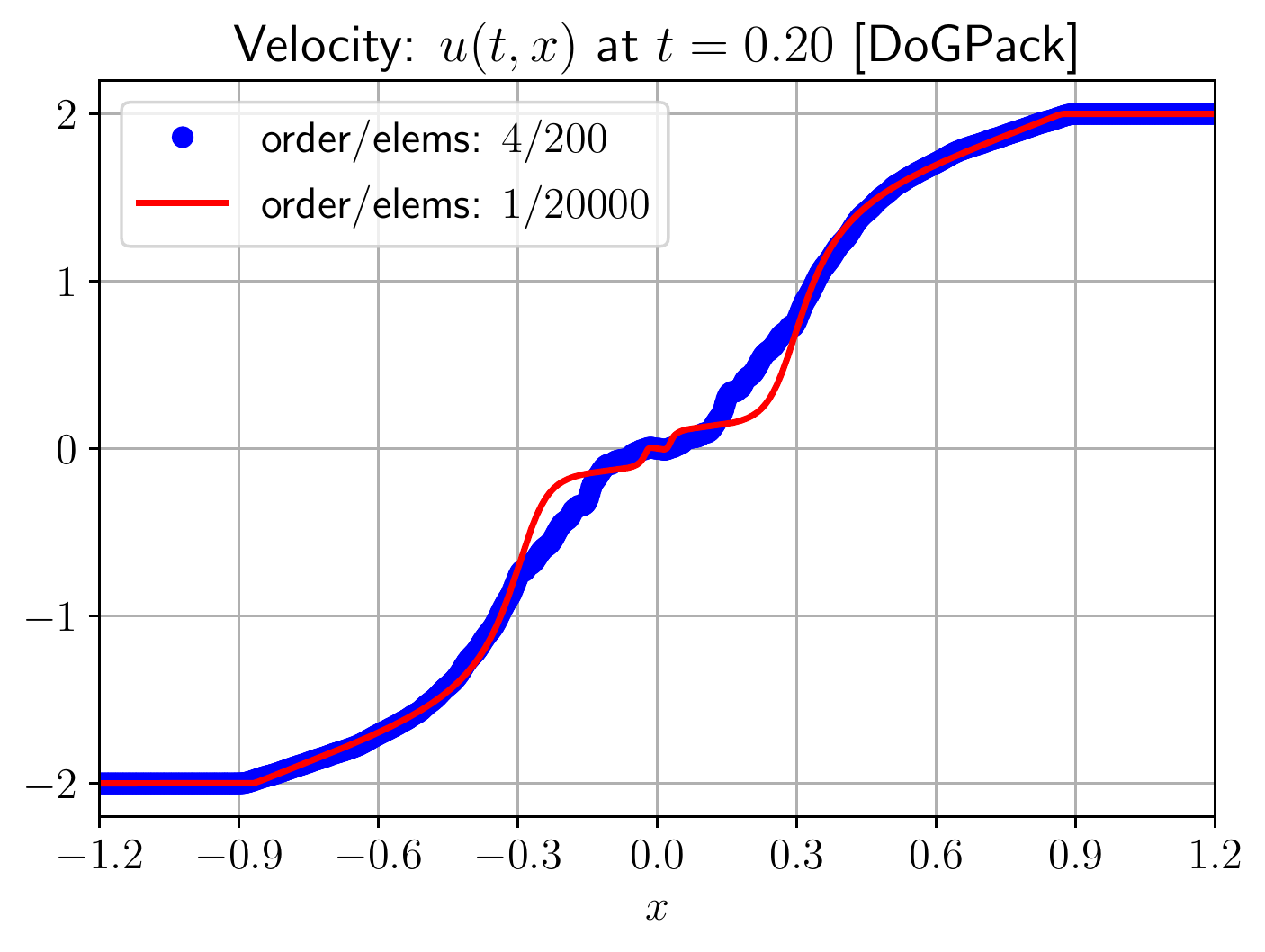} \\
(c)\includegraphics[width=.44\linewidth]{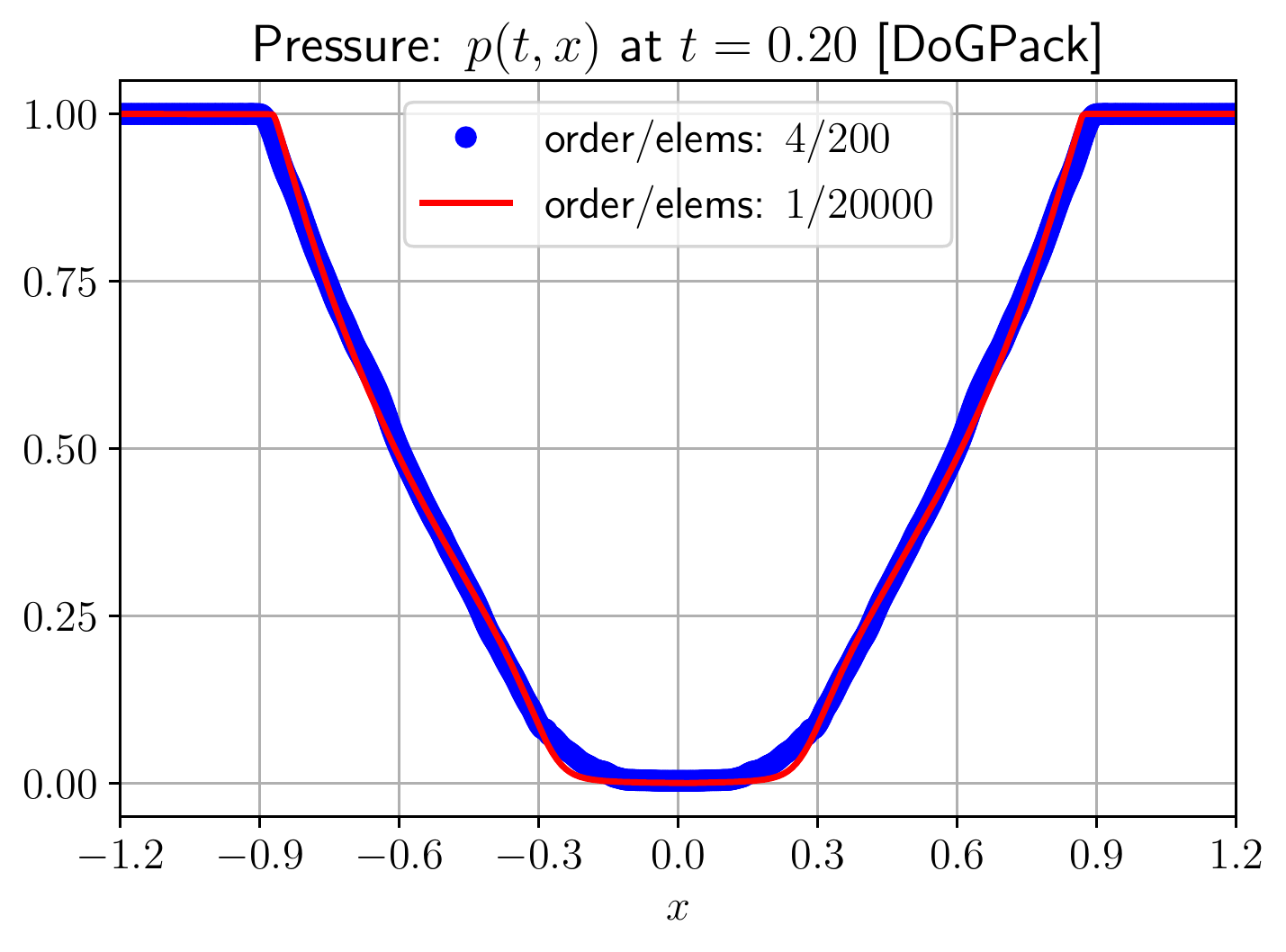} &
(d)\includegraphics[width=.44\linewidth]{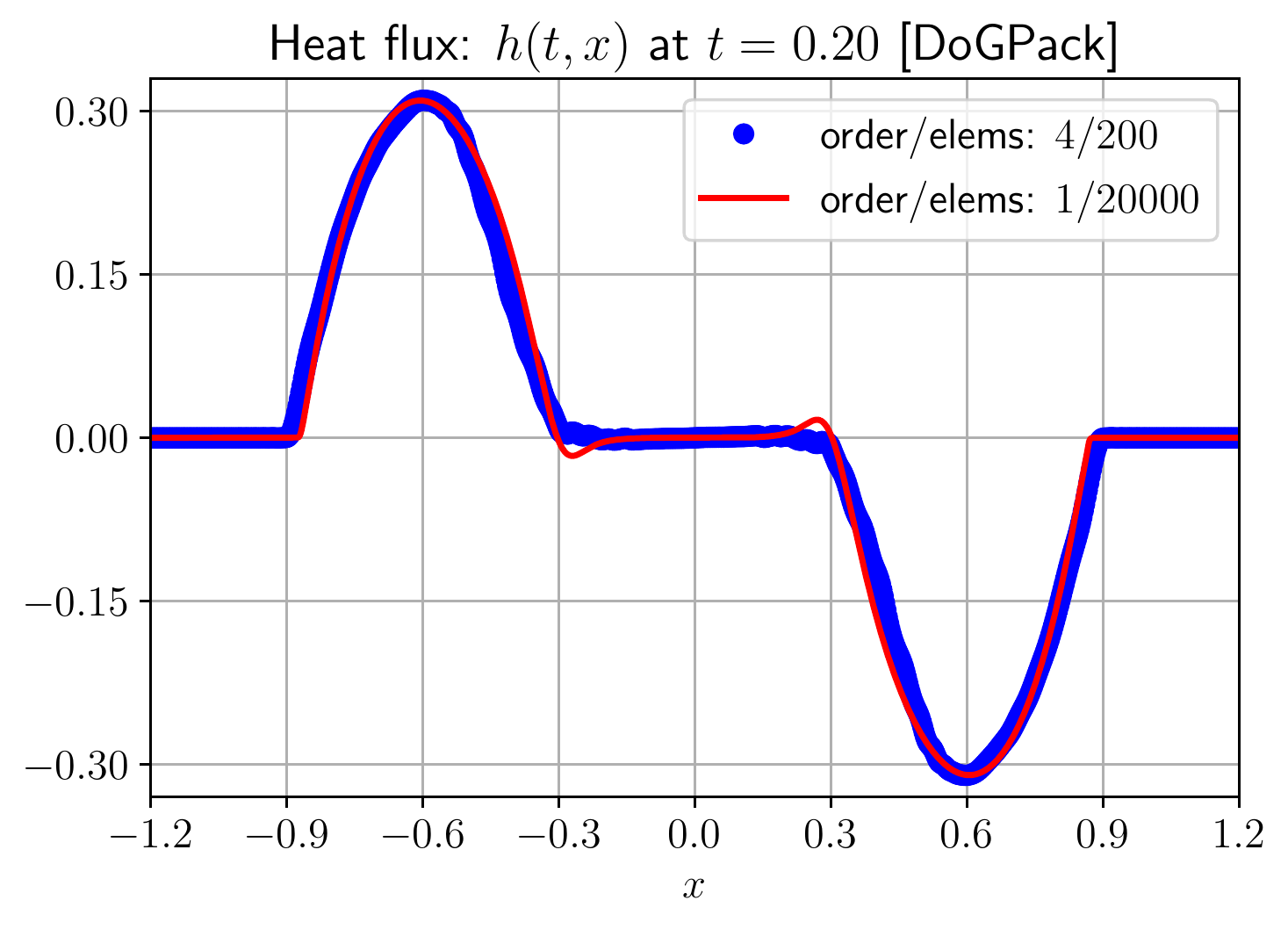} \\
(e)\includegraphics[width=.44\linewidth]{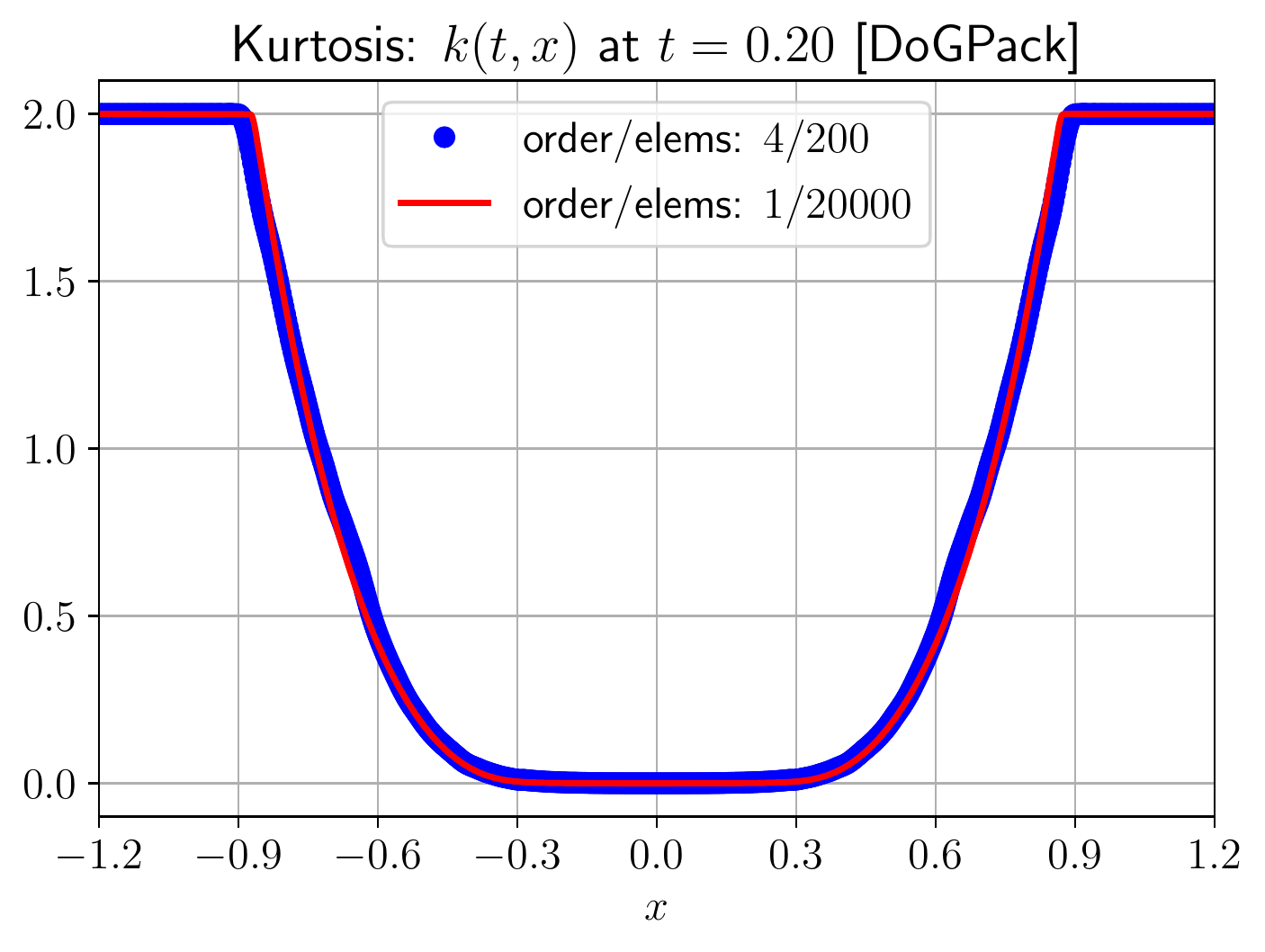} &
(f)\includegraphics[width=.44\linewidth]{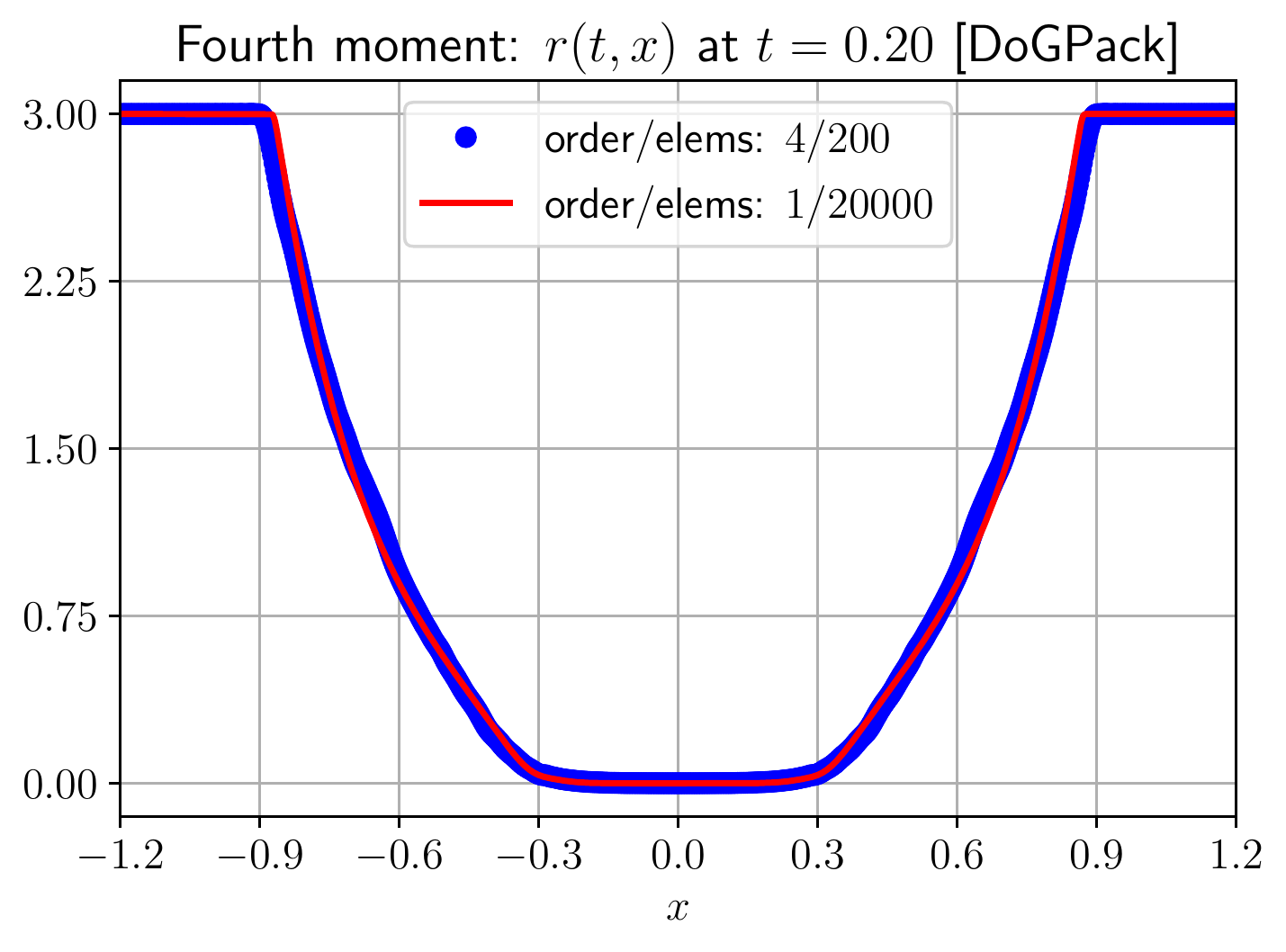}
\end{tabular}
\caption{(\S\ref{sub:vacuum}: double rarefaction vacuum problem)
Numerical solution of the double rarefaction vacuum problem
on $x \in [-1.2, 1.2]$ with initial conditions given
by \eqref{eqn:vacuum_init}. 
Shown are the results from a simulation run with 
two distinct methods: (1) the $\morder=4$ scheme
with 200 elements and full limiters (shown as blue dots), and (2)
the first-order Rusanov  scheme
with 20,000 elements (shown as a solid red line). For the
$\morder=4$ scheme, we are plotting four points per element in order
to show the intra-element solution structure. 
The panels show the primitive variables: (a) density: $\rho(t,x)$, (b) macroscopic velocity: $u(t,x)$, (c) pressure: $p(t,x)$, (d) heat flux: $\heat(t,x)$, (e) modified kurtosis: $k(t,x)$, and (f) primitive fourth-moment: $r(t,x)$.}
\label{fig:vacuum}
\end{figure}

\begin{figure}
\begin{tabular}{cc}
(a)\includegraphics[width=.44\linewidth]{figures/density_double_rarefaction_200.pdf} &
(b)\includegraphics[width=.44\linewidth]{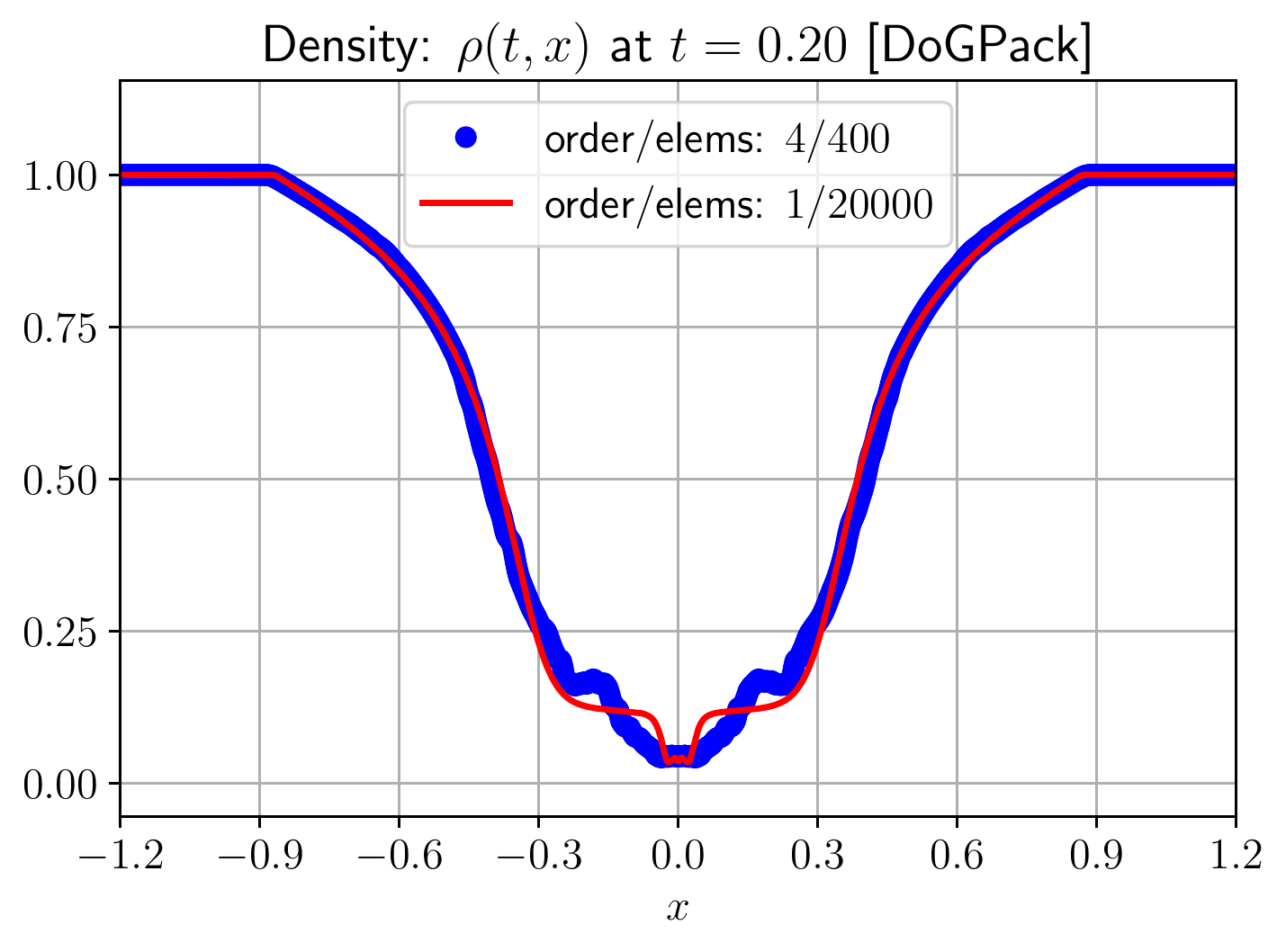} \\
(c)\includegraphics[width=.44\linewidth]{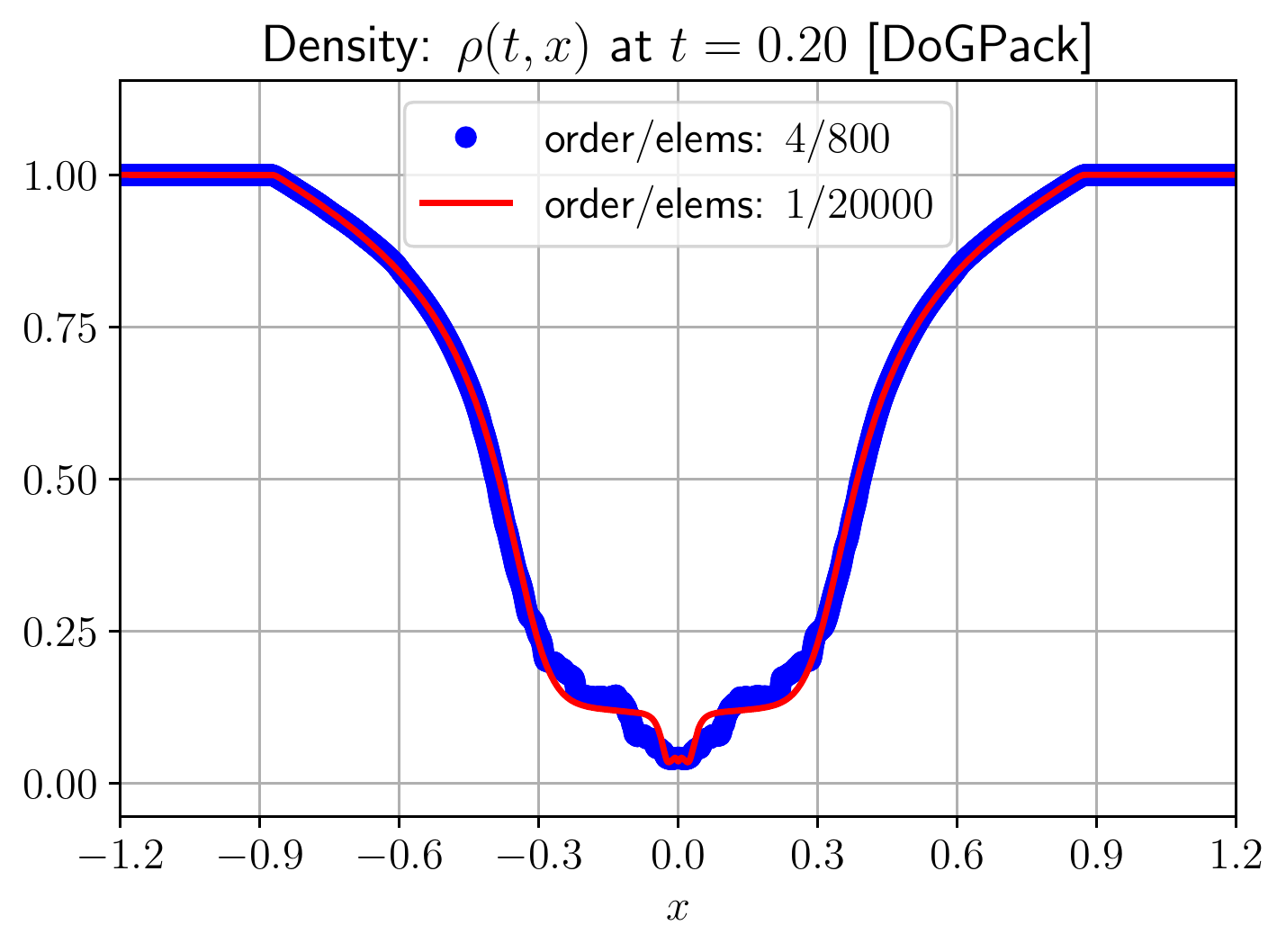} &
(d)\includegraphics[width=.44\linewidth]{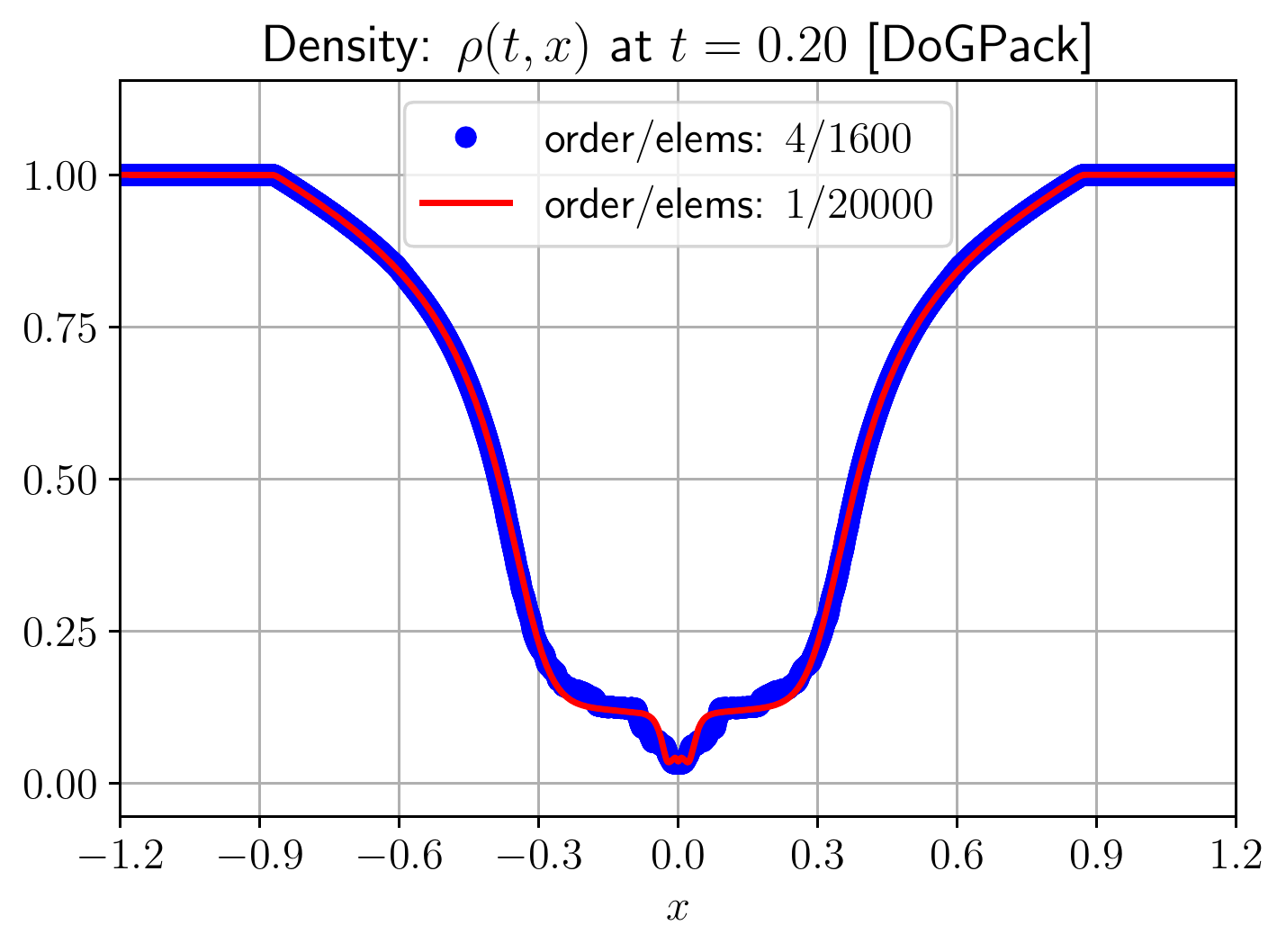} 
\end{tabular}
\caption{(\S\ref{sub:vacuum}: double rarefaction vacuum problem)
Numerical solution of the double rarefaction vacuum problem
on $x \in [-1.2, 1.2]$ with initial conditions given
by \eqref{eqn:vacuum_init}. 
Shown are the densities at various grid resolutions: (a) $N=200$,
(b) $N=400$, (c) $N=800$, and (d) $N=1600$.
In each panel, we compare the $\morder=4$ scheme with full limiters (shown as blue dots) 
with the first-order Rusanov  scheme
with 20,000 elements (shown as a solid red line).}
\label{fig:vacuum_converge}
\end{figure}

\section{Extension to HyQMOM-BGK}
\label{sec:numerics_bgk}
Up to this point, we have only considered the HyQMOM approximation applied to the Vlasov model \cref{eqn:Boltzmann1d}; this allowed us to study the mathematical structure of HyQMOM and
to develop accurate high-order methods and limiters. On the other hand, the practicality of HyQMOM is not for solving
collisionless kinetic models since, in this regime, it would be far better to directly solve the Vlasov equation with Lagrangian or semi-Lagrangian approaches. Instead, the true benefit of the HyQMOM approximation is in the approximation of kinetic systems near thermodynamic equilibrium -- a regime we study in this section.

In this section, we extend the previously developed numerical method to HyQMOM with a BGK collision operator. Importantly, we develop this extension so that the resulting HyQMOM solver adheres to the following two key design parameters:
\begin{enumerate}
\item The method should remain high-order accurate irrespective of the Knudsen number: $\varepsilon>0$.
\item For fixed mesh parameters (i.e., fixed $\Delta x$ and $\Delta t$), the method should remain stable in the singular
limit: $\varepsilon \rightarrow 0^+$. This property is often referred to as the {\it asymptotic-preserving} (AP) property, and a variety of schemes with this property can be found in the literature (e.g.,
see \cite{article:Ben08,article:Caflish1997,article:Coron1991,article:Gabetta1997,article:Jin1995,article:Jin99,article:Jin12,article:Jin1996,article:Pieraccini2007,article:Xiong2017}).
\end{enumerate}
The specific approach we detail in this section is novel and directly relies on the prediction and correction format of the 
method developed in \cref{sec:numerics}.

\subsection{1D1V Boltzmann-BGK equation}
Consider the 1D1V Boltzmann-BGK equation \cite{article:BGK54}:
\begin{equation}
\label{eqn:boltzmann-bgk}
f_{,t} + v f_{,x} = \frac{1}{\varepsilon} \left( {\mathcal M} - f \right),
\end{equation}
where $\varepsilon>0$ is the Knudsen number, which is a non-dimensional ratio
of the particle mean-free path to a characteristic length scale,
and ${\mathcal M}(t,x,v):\reals_{\ge 0} \times \reals \times \reals \mapsto \reals_{\ge 0}$
is the Maxwell-Boltzmann distribution:
\begin{equation}
{\mathcal M}(t,x,v) := \frac{\rho}{\sqrt{2\pi T}} e^{-\frac{(v-u)^2}{2T}}.
\end{equation}
In this expression, $\rho$ is density, $p$ is pressure, and $T=p/\rho$ is temperature (e.g., see 
definitions \eqref{eqn:prim_vars_1}). For $\varepsilon \gg 1$, and for a fixed $t$ and $x$,
the collision operator is weak, and the solution behaves similarly to  Vlasov equation \eqref{eqn:Boltzmann1d}. For $\varepsilon \ll 1$, and for a fixed $t$ and $x$, the BGK collision operator forces $f$ towards the Maxwell-Boltzmann distribution (i.e., thermodynamic equilibrium):
\begin{equation}
f(t,x,v) \rightarrow {\mathcal M}(t,x,v) + {\mathcal O}(\varepsilon).
\end{equation}

\subsection{HyQMOM-BGK and the asymptotic-preserving property}
Relevant in this work are the first five moments of \eqref{eqn:boltzmann-bgk} with the
HyQMOM moment-closure \eqref{eqn:qmom_full_eqns}:
\begin{align}
\label{eqn:HyQMOM_BGK}
\vec{q}_{,t}+\vec{f}\left(\vec{q}\right)_{,x} = \frac{1}{\varepsilon} \, \vec{S^{\text{cons}}}\left(\vec{q}\right), \qquad
\vec{\alpha}_{,t}+\mat{B}\left(\vec{\alpha}\right) \, 
\vec{\alpha}_{,x} =  \frac{1}{\varepsilon} \, \vec{S^{\text{prim}}}\left(\vec{\alpha}\right),
\end{align}
where only the fourth and fifth components of the source terms are nonzero:
\begin{equation}
\label{eqn:HyQMOM_BGK_source}
S^{\text{cons}}_4  = S^{\text{prim}}_4 = -\heat, \quad S^{\text{cons}}_5 = -k  + \frac{2 p^2}{\rho} - \frac{4 p  u \heat + \heat^2}{p}, \quad  S^{\text{prim}}_5 = -k  + \frac{2 p^2}{\rho} + \frac{\heat^2}{p}.
\end{equation}
In \cref{eqn:HyQMOM_BGK} we are using definitions \eqref{eqn:qmom_full_eqns}, 
\eqref{eqn:HyQMOM_sys2}, and \eqref{eqn:HyQMOM_prim}. For $\varepsilon \ll 1$, and for a fixed $t$ and $x$, the BGK collision operator forces the heat flux, $\heat$, and modified kurtosis, $k$, towards their Maxwell-Boltzmann values:
\begin{equation}
\label{eqn:heat_kurt_equilibrium}
\heat(t,x) = 0 + {\mathcal O}(\varepsilon) \qquad \text{and} \qquad
k(t,x) = \frac{2 p(t,x)^2}{\rho(t,x)} + {\mathcal O}(\varepsilon).
\end{equation}

In particular, in the $\varepsilon \rightarrow 0^+$ limit, solutions of the HyQMOM-BGK system converge  to solutions 
of the 1D compressible Euler equations at a convergence
rate of ${\mathcal O}\left(\varepsilon\right)$:
\begin{equation}
\label{eqn:euler1d}
\begin{bmatrix}
\rho \\ \rho u \\ \rho u^2 + p 
\end{bmatrix}_{,t}
+ 
\begin{bmatrix}
\rho u \\ \rho u^2 + p \\ \rho u^3 + 3pu 
\end{bmatrix}_{,x} = \vec{0},
\end{equation}
where $\heat \equiv 0$ and $k \equiv \frac{2p}{\rho}$. Furthermore,
by including the next order term in the Chapman-Enskog expansion, one can show that solutions to HyQMOM-BGK converge
 to solutions of the 1D Navier-Stokes 
 equations at a convergence rate of
${\mathcal O}(\varepsilon^2)$  \cite{article:Bardos1991}:
  \begin{equation}
  \label{eqn:navierstokes1d}
\begin{bmatrix}
\rho \\ \rho u \\ \rho u^2 + p 
\end{bmatrix}_{,t}
+ 
\begin{bmatrix}
\rho u \\ \rho u^2 + p \\ \rho u^3 + 3pu 
\end{bmatrix}_{,x} = \begin{bmatrix}
0 \\ 0 \\  \frac{3}{2} \varepsilon p T_{,x}
\end{bmatrix}_{,x},
\end{equation}
where $T=\frac{p}{\rho}$ and 
$q \equiv -\frac{3}{2} \varepsilon p T_{,x}$. 

\begin{definition}[Asymptotic-preserving (AP) property \cite{article:Jin12}]
Let $\vec{q^{(\Delta t, \Delta x)}}(t,x; \varepsilon)$ be an approximation to the exact solution of 
\cref{eqn:HyQMOM_BGK}--\cref{eqn:HyQMOM_BGK_source} as
computed by a numerical method with mesh parameters $\Delta t, \Delta x > 0$. We assume that for a fixed $\varepsilon>0$, this
method is convergent to the exact solution of \cref{eqn:HyQMOM_BGK}--\cref{eqn:HyQMOM_BGK_source}.
This numerical method is said to be {\it asymptotic-preserving (AP)} provided that the vanishing mesh parameter limit, $\Delta t, \Delta x \rightarrow 0^+$, and the vanishing
Knudsen number limit, $\varepsilon \rightarrow 0^+$, commute:
\[
	\lim_{\varepsilon \rightarrow 0^+} \left[ \lim_{\Delta t, \Delta x \rightarrow 0^+} \, \vec{q^{(\Delta t, \Delta x)}}(t,x; \varepsilon)\right] =
	\lim_{\Delta t, \Delta x \rightarrow 0^+} \left[\lim_{\varepsilon \rightarrow 0^+}  \, \vec{q^{(\Delta t, \Delta x)}}(t,x; \varepsilon) \right].
\]
\end{definition}
Practically, this means that an AP scheme remains stable and accurate for a fixed mesh, $\Delta t$, and $\Delta x$,
for all $\varepsilon>0$, including in the limit $\varepsilon \rightarrow 0^+$.

\bigskip

The goal of this section is to develop an extension of the Lax-Wendroff DG scheme
developed in \cref{sec:numerics} and \cref{sec:limiters} for the HyQMOM-BGK system
\cref{eqn:HyQMOM_BGK}--\cref{eqn:HyQMOM_BGK_source} that  behaves, on the discrete
level, as a consistent and stable numerical method for  \cref{eqn:euler1d} and \cref{eqn:navierstokes1d} in the singular limit
$\varepsilon \rightarrow 0^+$.
The key innovation in this work is that we make use of the prediction-correction formulation of Lax-Wendroff DG
to incorporate the collision operator.

\subsection{Prediction}
\label{sec:prediction_bgk}
To describe the HyQMOM-BGK prediction step, it is first useful 
to define the following matrices that allow us to map Legendre coefficients to nodal space-time Gauss-Legendre quadrature points
and back again:
\begin{alignat}{2}
\label{eqn:map_to_spacetime_nodes}
  C^1_{(a,b)}&:=\Psi_b \left( \tau_a, \, \xi_a \right), \qquad
  &&\mat{C^1} \in \reals^{M_{\text{O}}^2 \times \mpred}, \\
  \label{eqn:map_to_spacetime_legendre}
    C^2_{(b,a)}&:=\frac{\omega_a}{4}  \, \Psi_b \left( \tau_a, \, \xi_a \right), \qquad
  &&\mat{C^2} \in \reals^{\mpred \times M_{\text{O}}^2},
\end{alignat}
where $\omega_a$ and $\left( \tau_a, \, \xi_a \right)$ for $a=1,\ldots,M_{\text{O}}^2$
are tensor product Gauss-Legendre weights and abscissas. These
two matrices satisfy
\begin{equation}
\mat{C^2} \, \mat{C^1} = \mat{ \, \, {\mathbb I} \, \, } \in \reals^{\mpred \times \mpred}.
\end{equation}


HyQMOM consists of five evolution equations, and the first three are unaffected by the collision operator; therefore, the update inside the Picard iteration for the three collision invariants (i.e., density, macroscopic velocity, and pressure) remains the same as in the collisionless case:
\cref{eqn:picard}.
On the other hand, the update for the heat flux, $\heat$, has a non-zero BGK contribution; however, the BGK term is linear in the heat flux
(see \cref{eqn:HyQMOM_BGK,eqn:HyQMOM_BGK_source}), which allows for simple treatment. The strategy we pursue here is to include the BGK source term in the implicit portion of the Picard update to remain uniformly stable in $\varepsilon>0$. After simple algebra, we arrive at the following update for the heat flux, $\heat$:
\begin{align}
\label{eqn:bgk_pred_heat_update}
\begin{split} 
\left( \frac{\Delta t}{2} \, \mat{I} + \varepsilon \, \mat{L} \, \right) \, \vec{W^{n+\half (j)}_{i \, (:,4)}} = \, &
 \frac{\varepsilon}{4} \sum_{a=1}^{\morder}
\sum_{b=1}^{\morder} \, \omega_a  \, \omega_b \, 
  \vec{{\Psi}}\left(\mu_b, \, \mu_a \right)
   \, {\Theta}_{(a,b,4)} \\
+ & \frac{\varepsilon}{4} \sum_{b=1}^{\morder} \omega_{b} \, \vec{{\Psi}} \left(-1,\xi_b \right)  \, \vec{\Phi} \left(\xi_b \right)^T \vec{A^{n}_{i \, (:,4)}},
\end{split}
\end{align}
where we are using the short-hand:
\begin{equation}
{\Theta}_{(a,b,m)} := {\Theta}_m\left( \vec{\Psi}\left(\mu_b, \mu_a \right)^T \mat{W^{n+\half (j-1)}_i} \right).
\end{equation} 

The update for the final primitive variable, $k$ (modified kurtosis),
requires more work. The source term shown in \cref{eqn:HyQMOM_BGK,eqn:HyQMOM_BGK_source} is linear in $k$, but it also includes nonlinear terms from three previously updated quantities: $\rho$, $p$, and $h$. To construct these nonlinear quantities, we first apply the mapping from Legendre to nodal values via
\cref{eqn:map_to_spacetime_nodes}, and then evaluate the nonlinear portion of the source:
\begin{equation}
\label{eqn:bgk_nonlinar_pred_source}
\begin{aligned}
  m=1,3,4: \quad &\vec{\widehat{W}_{(:,m)}} =  \mat{C^1} \, \vec{W_{i(:,m)}^{n+\half (j)}} 
  \in \reals^{M_{\text{O}}^2},
   \\
  a=1,\ldots,M_{\text{O}}^2: \quad &\left\{ \rho_a, p_a, \heat_a \right\} = \widehat{W}_{(a,\{1,3,4\})}, \quad
  \widehat{\mathcal S}^{ \, (j)}_{i a} = \frac{2 p_a^2}{\rho_a} + \frac{h_a^2}{p_a}.
  \end{aligned}
  \end{equation}
From here, the update for the modified kurtosis inside the Picard iteration looks very similar to the update for heat flux (see \cref{eqn:bgk_pred_heat_update}), but with the additional nonlinear terms computed from \cref{eqn:bgk_nonlinar_pred_source}, which now need to be mapped back to Legendre coefficients via \cref{eqn:map_to_spacetime_legendre}.
 After some simple algebra, the update takes the following form:
 \begin{equation}
 \label{eqn:bgk_pred_kurt_update}
\begin{split}
\left( \, \frac{\Delta t}{2} \mat{I} + \varepsilon \, \mat{L} \, \right) \, \vec{W^{n+\half (j)}_{i \, (:,5)}} = \, &  
 \frac{\varepsilon}{4} \sum_{a=1}^{\morder}
\sum_{b=1}^{\morder} \, \omega_a  \, \omega_b \, 
  \vec{{\Psi}}\left(\mu_b, \, \mu_a \right)
   \, {\Theta}_{(a,b,5)}  \\ 
   + &
 \frac{\varepsilon}{4} \sum_{b=1}^{\morder} \omega_{b} \, \vec{{\Psi}} \left(-1,\xi_b \right)  \, \vec{\Phi} \left(\xi_b \right)^T \vec{A^{n}_{i \, (:,5)}} + \frac{\Delta t}{2} \, \mat{C^2} \, \, \vec{\widehat{\mathcal S}^{(j)}_i}.
\end{split}
\end{equation}

\subsection{Post-prediction BGK source evaluation}
\label{sec:post_prediction_bgk}
Once the Picard iterations are complete and all five primitive variables have been predicted, there is one final computation that must be completed to prepare us for the correction step: we need to evaluate and project the BGK source term, $\vec{S^{\text{cons}}}$, from \cref{eqn:HyQMOM_BGK_source}. This is done similar to \cref{eqn:bgk_nonlinar_pred_source} by first mapping the predicted solution from Legendre to nodal values via
\cref{eqn:map_to_spacetime_nodes}, 
then evaluating the source components at nodal values, and finally mapping back to Legendre coefficients via \cref{eqn:map_to_spacetime_legendre}:
  \begin{equation}
  \label{eqn:pre_correction_source}
  \begin{aligned}
    m=1,\ldots,5: \quad &\vec{\widehat{W}_{(:,m)}} =  \mat{C^1} \, \vec{W_{i(:,m)}^{n+\half}} 
  \in \reals^{M_{\text{O}}^2}, \\
  a=1,\ldots,M_{\text{O}}^2: \quad  &\left\{ \rho_a, \, u_a, \, p_a, \, \heat_a, \, k_a \right\}
  = \widehat{W}_{(a,1:5)}, \\ 
    &\widehat{\Delta{\mathcal M}}_{(a,4)} = -\heat_a, \, \, \, \, \, 
  \widehat{\Delta {\mathcal M}}_{(a,5)} = -k_a + \frac{2 p_a^2}{\rho_a} - \frac{4  p_a u_a \heat_a + \heat_a^2}{p_a},  \\
      m=4,5: \quad &\vec{\Delta {\mathcal M}_{i(:,m)}} = \mat{C^2} \, \vec{\widehat{\Delta {\mathcal M}}_{(:,m)}}  \in \reals^{\mpred}.
\end{aligned}
 \end{equation}
In the above expressions, we use the notation $\Delta {\mathcal M}$ to signify that these BGK source terms are, in fact, measuring the deviations in the heat flux, $h$, and the modified kurtosis, $k$, from their Maxwell-Boltzmann values (e.g., see \eqref{eqn:heat_kurt_equilibrium}).

We choose to do the above BGK source evaluation and projection, 
\cref{eqn:pre_correction_source}, as a separate step rather than just as part of the correction update since we need to be extra careful in assuring that the final update is asymptotic-preserving. Indeed, we show in the next section how to obtain a fully asymptotic-preserving scheme.

\subsection{Correction}
\label{sec:correction_bgk}
Just as in the prediction step, we begin by defining matrices that allow us to map Legendre coefficients to nodal space Gauss-Legendre quadrature points and back again:
\begin{alignat}{2}
\label{eqn:map_to_space_nodes}
   C^3_{(a,b)}&:=\Phi_b \left( \xi_a \right), \qquad
  &&\mat{C^3} \in \reals^{M_{\text{O}} \times \mcorr}, \\
\label{eqn:map_to_space_legendre}
      C^4_{(b,a)}&:=\frac{\omega_a}{2}  \, \Phi_b \left( \xi_a \right), \qquad
  &&\mat{C^4} \in \reals^{\mcorr \times M_{\text{O}}},
\end{alignat}
where $\omega_a$ and $\xi_a$ for $a=1,\ldots,M_{\text{O}}$
are Gauss-Legendre weights and abscissas. These
two matrices satisfy
\begin{equation}
\mat{C^4} \, \mat{C^3} = \mat{ \, \, {\mathbb I} \, \, } \in \reals^{\mcorr \times \mcorr}.
\end{equation}

As far as the correction step is concerned, the only difference between the collisionless update, as shown through
\cref{eqn:corr_step_update_1,eqn:corr_step_update_2,eqn:corr_step_update_3,eqn:corr_step_update_4} and the BGK version is the
additional BGK source integral needed in \cref{eqn:corr_step_update_1}:
\begin{equation}
 \label{eqn:corr_step_update_BGK}
 \begin{split}
        \mat{Q_i^{n+1}} &= 
 \mat{Q^{n}_i} \, + \,  \ldots  \,
+ \, \underset{\text{BGK source}}{\underbrace{\frac{\frac{\Delta t}{2}}{2\varepsilon} \iint_{-1}^{1} \vec{\Phi} \, \left[ \vec{S^{\text{cons}}} \right]^T \, d\tau \, d\xi}}.
\end{split}
    \end{equation}
To eventually achieve the asymptotic-preserving (AP) property, we introduce the following Legendre-in-space-Radau-in-time quadrature:
\begin{equation}
\label{eqn:Legendre-in-space-Radau-in-time}
\iint_{-1}^{1} g\left(\tau, \xi \right) \, d\tau \, d\xi \approx
\sum_{k=1}^{M_{\text{O}}} \sum_{\ell=1}^{M_{\text{O}}} \omega_k \, 
 \omega^{R}_{\ell} \,
 g\left(\tau^R_{\ell}, \xi_{k} \right),
\end{equation}
where for $a=1,\ldots,{M_{\text{O}}}$, $(\omega_a,\xi_a)$ are again 1D Gauss-Legendre weights/abscissas, while $(\omega^{R}_{a}, \xi^{R}_{a})$
are 1D Gauss-Radau weights/abscissas. In particular, what we aim to do here is to handle the
BGK source in \cref{eqn:corr_step_update_BGK} using a strategy that replaces the actual Legendre-in-space-Radau-in-time quadrature
shown via \cref{eqn:Legendre-in-space-Radau-in-time}, by a version where the function values at the $\tau=1$ quadrature points are replaced
by the unknown solution $Q^{n+1}$:
\begin{align}
\label{eqn:implicit_gauss_radau}
m=4,5: \quad \frac{1}{2\varepsilon} \iint_{-1}^{1} \vec{\Phi} \, S^{\text{cons}}_{m} \, d\tau \, d\xi \approx 
\underset{\text{explicit}}{\underbrace{\frac{1}{\varepsilon} \, \mat{R} \, \, \vec{\Delta {\mathcal M}_{i(:,m)}}}} + 
\underset{\text{implicit}}{\underbrace{\frac{1}{r \varepsilon} \left( \vec{{S}_{(:,m)}} - \vec{Q^{n+1}_{i(:,m)}}  \right)}},
\end{align}
where $\Delta {\mathcal M}$ is defined by \cref{eqn:pre_correction_source}, ${S}$ are Maxwell-Boltzmann moments (the precise definition is provided below in \cref{eqn:maxwell_source}),
and 
\begin{gather}
 \mat{R} = \frac{1}{2}\sum_{k=1}^{M_{\text{O}}} \sum_{\ell=1}^{M_{\text{O}}-1} \omega_k \, 
 \omega^{R}_{\ell} \,
 \vec{\Phi}\left(\xi_{k} \right) \vec{\Psi}\left(\tau^R_{\ell}, \xi_{k} \right)^T \in \reals^{\mcorr \times \mpred}, \quad
  r = 1\bigl/\omega^{R}_{M_{\text{O}}}.
\end{gather}
This quadrature provides a strategy for implicitly handling the BGK collision term, which is critical
for achieving the asymptotic-preserving (AP) property.  We illustrate the modified 
Gauss-Radau quadrature strategy in \cref{fig:radau_quad}.

The full correction update is detailed below. The first three moments are collision invariants and thus updated 
via \cref{eqn:corr_step_update_1,eqn:corr_step_update_2,eqn:corr_step_update_3,eqn:corr_step_update_4}. 
From these updated moments, we compute the Maxwell-Boltzmann moments, $S$, that are required in \cref{eqn:implicit_gauss_radau}:
\begin{equation}
\label{eqn:maxwell_source}
\begin{aligned}
m=1,2,3: \quad &\vec{\widehat{M}_{(:,m)}} =  \mat{C^3} \, \, \vec{Q_{i(:,m)}^{n+1}} 
  \in \reals^{M_{\text{O}}}, \\
  a=1,\ldots,M_{\text{O}}: \quad &\left\{ \rho, \, u, \, p \right\} = 
  \left\{ \widehat{M}_{(a,1)}, \, \frac{\widehat{M}_{(a,2)}}{\widehat{M}_{(a,1)}}, \, 
   {\widehat{M}_{(a,3)}} - \frac{\widehat{M}_{(a,2)}^2}{\widehat{M}_{(a,1)}} \right\}, \\
  &\left\{ \widehat{S}_{i(a,4)}, \, \widehat{S}_{i(a,5)} \right\} =
  \left\{ \rho u^3 + 3 p u, \, \, \rho u^4 + 6 p u^2 + \frac{3 p^2}{\rho} \right\}.
  \end{aligned}
  \end{equation}
We then update the final two moments in a two-step process, where the first step is to apply a collisionless
update:
   \begin{gather}
    \begin{split}
    m=4,5: \quad {\vec{\widetilde{Q}_{i(:,m)}^{n+1}}} &= \vec{Q_{i(:,m)}^{n}} 
-  {\frac{\Delta t}{\Delta x}}\left( \, \vec{\Phi}(1) \, {{\mathcal F}^{n+\half}_{i+\half (m)}} 
-  \vec{\Phi}(-1) \, {{\mathcal F}^{n+\half}_{i-\half \, (m)}}  \, \right) \\
&+  \frac{\Delta t}{2 \Delta x}  \sum_{a=1}^{\morder}
\sum_{b=1}^{\morder} \, \omega_a  \, \omega_b \, \vec{\Phi}_{,\xi}\left(\mu_a \right) 
 f_{m} \left( \mat{W^{n+\half}_{i}} \, \vec{\Psi}\left(\mu_b,\mu_a\right) \right),
\end{split}
\end{gather}
followed by a collision step:
\begin{equation}
\label{eqn:collision_correct}
        \vec{Q_{i(:,4:5)}^{n+1}} = 
        \left( \frac{r \varepsilon}{\frac{\Delta t}{2} + r \varepsilon} \right)
        {\vec{\widetilde{Q}_{i(:,4:5)}^{n+1}}} +    
 \left( \frac{\frac{\Delta t}{2}}{\frac{\Delta t}{2} + r \varepsilon} \right)
\Biggl( r \mat{R} \, \vec{\Delta {\mathcal M}_{i(:,4:5)}} +  \mat{C^4} \, \, \vec{\widehat{S}_{i(:,4:5)}}  \Biggr). 
\end{equation}

\begin{note}
All the limiters described in \cref{sec:limiters} can still be applied to the HyQMOM-BGK 
solver described in this section.
\end{note}

\begin{figure}[!t]
\centering
\begin{tabular}{cc}
(a)  \begin{tikzpicture}[scale=0.85]
\draw (-2.5,-2.5) rectangle (2.5,2.5);
\node at (-2.1528407789851314380598662222320,-2.0570602024364802630) [draw,circle,fill=blue,inner sep=2pt]{};
\node at (-0.8499526089621406620066643977581,-2.0570602024364802630) [draw,circle,fill=blue,inner sep=2pt]{};
\node at (0.8499526089621406620066643977581,-2.0570602024364802630) [draw,circle,fill=blue,inner sep=2pt]{};
\node at (2.1528407789851314380598662222320,-2.0570602024364802630) [draw,circle,fill=blue,inner sep=2pt]{};
\node [anchor=south] at (-1.8,-1.95) [fill=white]{\footnotesize $\widehat{W}^{n+\frac{1}{2}}_1$};
\node [anchor=south] at (-0.48,-1.95) [fill=white]{\footnotesize $\widehat{W}^{n+\frac{1}{2}}_2$};
\node [anchor=south] at (1.2,-1.95) [fill=white]{\footnotesize $\widehat{W}^{n+\frac{1}{2}}_3$};
\node [anchor=south] at (2.5,-1.95) [fill=white]{\footnotesize $\widehat{W}^{n+\frac{1}{2}}_4$};
\node at (-2.1528407789851314380598662222320,-0.45266567779632644568) [draw,circle,fill=blue,inner sep=2pt]{};
\node at (-0.8499526089621406620066643977581,-0.45266567779632644568) [draw,circle,fill=blue,inner sep=2pt]{};
\node at (0.8499526089621406620066643977581,-0.45266567779632644568) [draw,circle,fill=blue,inner sep=2pt]{};
\node at (2.1528407789851314380598662222320,-0.45266567779632644568) [draw,circle,fill=blue,inner sep=2pt]{};
\node [anchor=south] at (-1.8,-0.34) [fill=white]{\footnotesize $\widehat{W}^{n+\frac{1}{2}}_5$};
\node [anchor=south] at (-0.48,-0.34) [fill=white]{\footnotesize $\widehat{W}^{n+\frac{1}{2}}_6$};
\node [anchor=south] at (1.2,-0.34) [fill=white]{\footnotesize $\widehat{W}^{n+\frac{1}{2}}_7$};
\node [anchor=south] at (2.5,-0.34) [fill=white]{\footnotesize $\widehat{W}^{n+\frac{1}{2}}_8$};
\node at (-2.1528407789851314380598662222320,1.4382973088042352801) [draw,circle,fill=blue,inner sep=2pt]{};
\node at (-0.8499526089621406620066643977581,1.4382973088042352801) [draw,circle,fill=blue,inner sep=2pt]{};
\node at (0.8499526089621406620066643977581,1.4382973088042352801) [draw,circle,fill=blue,inner sep=2pt]{};
\node at (2.1528407789851314380598662222320,1.4382973088042352801) [draw,circle,fill=blue,inner sep=2pt]{};
\node [anchor=south] at (-1.8,1.545) [fill=white]{\footnotesize $\widehat{W}^{n+\frac{1}{2}}_9$};
\node [anchor=south] at (-0.48,1.545) [fill=white]{\footnotesize $\widehat{W}^{n+\frac{1}{2}}_{10}$};
\node [anchor=south] at (1.2,1.545) [fill=white]{\footnotesize $\widehat{W}^{n+\frac{1}{2}}_{11}$};
\node [anchor=south] at (2.5,1.545) [fill=white]{\footnotesize $\widehat{W}^{n+\frac{1}{2}}_{12}$};
\node at (-2.1528407789851314380598662222320,2.5) [draw,circle,fill=blue,inner sep=2pt]{};
\node at (-0.8499526089621406620066643977581,2.5) [draw,circle,fill=blue,inner sep=2pt]{};
\node at (0.8499526089621406620066643977581,2.5) [draw,circle,fill=blue,inner sep=2pt]{};
\node at (2.1528407789851314380598662222320,2.5) [draw,circle,fill=blue,inner sep=2pt]{};
\node [anchor=south] at (-1.8,2.62) [fill=white]{\footnotesize $\widehat{W}^{n+\frac{1}{2}}_{13}$};
\node [anchor=south] at (-0.48,2.62) [fill=white]{\footnotesize $\widehat{W}^{n+\frac{1}{2}}_{14}$};
\node [anchor=south] at (1.2,2.62) [fill=white]{\footnotesize $\widehat{W}^{n+\frac{1}{2}}_{15}$};
\node [anchor=south] at (2.5,2.62) [fill=white]{\footnotesize $\widehat{W}^{n+\frac{1}{2}}_{16}$};
\end{tikzpicture} & \hspace{-3mm}
(b)  \begin{tikzpicture}[scale=0.85]
\draw (-2.5,-2.5) rectangle (2.5,2.5);
\node at (-2.1528407789851314380598662222320,-2.0570602024364802630) [draw,circle,fill=blue,inner sep=2pt]{};
\node at (-0.8499526089621406620066643977581,-2.0570602024364802630) [draw,circle,fill=blue,inner sep=2pt]{};
\node at (0.8499526089621406620066643977581,-2.0570602024364802630) [draw,circle,fill=blue,inner sep=2pt]{};
\node at (2.1528407789851314380598662222320,-2.0570602024364802630) [draw,circle,fill=blue,inner sep=2pt]{};
\node [anchor=south] at (-1.8,-1.95) [fill=white]{\footnotesize $\widehat{W}^{n+\frac{1}{2}}_1$};
\node [anchor=south] at (-0.48,-1.95) [fill=white]{\footnotesize $\widehat{W}^{n+\frac{1}{2}}_2$};
\node [anchor=south] at (1.2,-1.95) [fill=white]{\footnotesize $\widehat{W}^{n+\frac{1}{2}}_3$};
\node [anchor=south] at (2.5,-1.95) [fill=white]{\footnotesize $\widehat{W}^{n+\frac{1}{2}}_4$};
\node at (-2.1528407789851314380598662222320,-0.45266567779632644568) [draw,circle,fill=blue,inner sep=2pt]{};
\node at (-0.8499526089621406620066643977581,-0.45266567779632644568) [draw,circle,fill=blue,inner sep=2pt]{};
\node at (0.8499526089621406620066643977581,-0.45266567779632644568) [draw,circle,fill=blue,inner sep=2pt]{};
\node at (2.1528407789851314380598662222320,-0.45266567779632644568) [draw,circle,fill=blue,inner sep=2pt]{};
\node [anchor=south] at (-1.8,-0.34) [fill=white]{\footnotesize $\widehat{W}^{n+\frac{1}{2}}_5$};
\node [anchor=south] at (-0.48,-0.34) [fill=white]{\footnotesize $\widehat{W}^{n+\frac{1}{2}}_6$};
\node [anchor=south] at (1.2,-0.34) [fill=white]{\footnotesize $\widehat{W}^{n+\frac{1}{2}}_7$};
\node [anchor=south] at (2.5,-0.34) [fill=white]{\footnotesize $\widehat{W}^{n+\frac{1}{2}}_8$};
\node at (-2.1528407789851314380598662222320,1.4382973088042352801) [draw,circle,fill=blue,inner sep=2pt]{};
\node at (-0.8499526089621406620066643977581,1.4382973088042352801) [draw,circle,fill=blue,inner sep=2pt]{};
\node at (0.8499526089621406620066643977581,1.4382973088042352801) [draw,circle,fill=blue,inner sep=2pt]{};
\node at (2.1528407789851314380598662222320,1.4382973088042352801) [draw,circle,fill=blue,inner sep=2pt]{};
\node [anchor=south] at (-1.8,1.545) [fill=white]{\footnotesize $\widehat{W}^{n+\frac{1}{2}}_9$};
\node [anchor=south] at (-0.48,1.545) [fill=white]{\footnotesize $\widehat{W}^{n+\frac{1}{2}}_{10}$};
\node [anchor=south] at (1.2,1.545) [fill=white]{\footnotesize $\widehat{W}^{n+\frac{1}{2}}_{11}$};
\node [anchor=south] at (2.5,1.545) [fill=white]{\footnotesize $\widehat{W}^{n+\frac{1}{2}}_{12}$};
\node [anchor=south] at (-1.9,2.62) [fill=white]{\footnotesize $Q^{n+1}_{1}$};
\node [anchor=south] at (-0.65,2.62) [fill=white]{\footnotesize $Q^{n+1}_{2}$};
\node [anchor=south] at (1.05,2.62) [fill=white]{\footnotesize $Q^{n+1}_{3}$};
\node [anchor=south] at (2.4,2.62) [fill=white]{\footnotesize $Q^{n+1}_{4}$};
\node at (-2.1528407789851314380598662222320,2.5) [draw,
    fill=red,
    minimum width=0.2cm,
    minimum height=0.2cm
]  (controller) {};

\node at (-0.8499526089621406620066643977581,2.5) [draw,
    fill=red,
    minimum width=0.2cm,
    minimum height=0.2cm
]  (controller) {};

\node at (0.8499526089621406620066643977581,2.5) [draw,
    fill=red,
    minimum width=0.2cm,
    minimum height=0.2cm
]  (controller) {};

\node at (2.1528407789851314380598662222320,2.5) [draw,
    fill=red,
    minimum width=0.2cm,
    minimum height=0.2cm
]  (controller) {};
\end{tikzpicture}
\end{tabular}
\caption{Numerical quadrature on the canonical space-time element $(\tau,\xi) \in [-1,1]^2$ using a tensor product between 1D
Gauss-Legendre points in $\xi$ and Gauss-Radau points in $\tau$. Panel (a) shows the Legendre-in-space-Radau-in-time
points in the case $\morder=4$ and the known solution values from the prediction step at those
quadrature points. Panel (b) shows the same thing, but the function values at the $\tau=1$ quadrature points are replaced
by the unknown solution $Q^{n+1}$. This strategy of replacing the $\tau=1$ quadrature point function values with the
unknown solution provides a strategy for implicitly handling the BGK collision term. \label{fig:radau_quad}}
\end{figure}
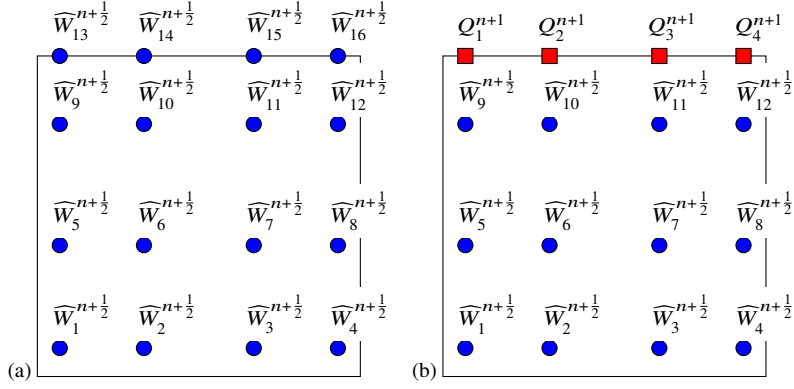

\subsection{Asymptotic-preserving condition}
The advantage of the above-proposed scheme for the HyQMOM-BGK is that it remains high-order accurate uniformly in $\varepsilon>0$ and is asymptotic-preserving in the $\varepsilon\rightarrow 0^+$ limit. The first claim is demonstrated via numerical examples in the next section; the second claim is easily demonstrated in this section.

\begin{lemma}
The method LxW-DG method for HyQMOM-BGK described in \cref{sec:prediction_bgk}, \cref{sec:post_prediction_bgk}, and \cref{sec:correction_bgk}
is asymptotic-preserving in the $\varepsilon\rightarrow 0^+$ limit.
\end{lemma}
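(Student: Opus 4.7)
The plan is to track the $\varepsilon\to 0^+$ limit through each stage of the scheme (prediction, post-prediction source evaluation, correction) and verify that the limiting discretization is a consistent Lax-Wendroff DG scheme for the 1D compressible Euler equations \cref{eqn:euler1d}, which were already identified as the equilibrium limit of HyQMOM-BGK. The argument proceeds by pushing $\varepsilon=0$ into the linear-algebra updates of each stage and then using the resulting equilibrium slaving to evaluate every nonlinear quantity that appears downstream.

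First I would examine the prediction equations. The three collision-invariant Picard updates \cref{eqn:picard} for $m=1,2,3$ are independent of $\varepsilon$ and unchanged. In the heat-flux update \cref{eqn:bgk_pred_heat_update}, the left-hand side collapses to $\tfrac{\Delta t}{2}\mat{I}\,\vec{W^{n+\half(j)}_{i(:,4)}}$ while the right-hand side vanishes (both summands carry an explicit $\varepsilon$ factor), so $\vec{W^{n+\half(j)}_{i(:,4)}}\to\vec{0}$ at every Picard iterate. Feeding this into the kurtosis update \cref{eqn:bgk_pred_kurt_update}, the same cancellation leaves only $\tfrac{\Delta t}{2}\mat{C^2}\vec{\widehat{\mathcal S}^{(j)}_i}$ on the right; since $\widehat{\mathcal S}_{ia}=2p_a^2/\rho_a+\heat_a^2/p_a$ uses the previous iterate of $\heat$, which has already been driven to zero, the predicted kurtosis at every nodal quadrature point converges to the Maxwell-Boltzmann value $k=2p^2/\rho$. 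The space-time predicted state thus converges pointwise to the Maxwell-Boltzmann equilibrium.

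Next I would propagate these limits through the post-prediction source \cref{eqn:pre_correction_source} and the correction step. Since the predicted $\heat\to 0$ and $k\to 2p^2/\rho$ nodally, both $\widehat{\Delta\mathcal M}_{(a,4)}$ and $\widehat{\Delta\mathcal M}_{(a,5)}$ vanish, so $\vec{\Delta\mathcal M_{i(:,4:5)}}\to\vec 0$. In the correction \cref{eqn:corr_step_update_1,eqn:corr_step_update_2,eqn:corr_step_update_3,eqn:corr_step_update_4} for the collision invariants, the vanishing of $\heat$ collapses the first three components of $\vec f$ to the Euler flux $(\rho u,\rho u^2+p,\rho u^3+3pu)^T$, and the Rusanov numerical flux inherits the same simplification, so the update for $(\rho,\rho u,\rho u^2+p)$ becomes a standard Lax-Wendroff DG step for \cref{eqn:euler1d}. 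In the collision correction \cref{eqn:collision_correct}, the explicit prefactor $(r\varepsilon)/(\tfrac{\Delta t}{2}+r\varepsilon)$ vanishes, the implicit prefactor $(\tfrac{\Delta t}{2})/(\tfrac{\Delta t}{2}+r\varepsilon)$ tends to $1$, and the $\mat R$ term drops out, leaving $\vec{Q^{n+1}_{i(:,4:5)}}\to \mat{C^4}\vec{\widehat S_{i(:,4:5)}}$. Since $\vec{\widehat S_{i(:,4:5)}}$ defined in \cref{eqn:maxwell_source} contains the Maxwellian moments $\rho u^3+3pu$ and $\rho u^4+6pu^2+3p^2/\rho$ evaluated at the Euler-updated nodal $(\rho,u,p)$ values, the limiting fourth and fifth moments are the DG projection of the Maxwellian moments.

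The main obstacle is making the Picard-iteration argument uniform in $\varepsilon$, since $\widehat{\mathcal S}^{(j)}$ in the kurtosis update depends on the heat-flux iterate $\heat^{(j-1)}$: I would proceed by induction on $j=1,\ldots,\morder-1$ to establish that an $\mathcal O(\varepsilon)$ residual in $\heat^{(j-1)}$ feeds through as an $\mathcal O(\varepsilon^2)$ residual in $k^{(j)}$, so the equilibrium slaving survives the finite Picard sweep. A secondary check is that the Gauss-Radau-in-time quadrature embedded in $\mat R$ is high-order exact, so the $r\mat R\vec{\Delta\mathcal M}$ term genuinely vanishes in the limit rather than polluting the Maxwell-Boltzmann projection. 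With these points in hand, the limit scheme decouples into a Lax-Wendroff DG discretization of the 1D compressible Euler equations for $(\rho,\rho u,\rho u^2+p)$ together with an algebraic projection that slaves the fourth and fifth moments to their Maxwell-Boltzmann equilibrium values, establishing the asymptotic-preserving property.
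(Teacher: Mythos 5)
Your proposal is correct and follows essentially the same route as the paper's proof: formally passing to the $\varepsilon\rightarrow 0^+$ limit in the two $\varepsilon$-dependent prediction updates (driving $\heat$ to zero and $k$ to its Maxwell--Boltzmann value) and in the collision correction (so that $\vec{\Delta{\mathcal M}}\rightarrow\vec{0}$ and $\vec{Q^{n+1}_{i(:,4:5)}}\rightarrow\mat{C^4}\,\vec{\widehat{S}_{i(:,4:5)}}$). Your additional tracking of the limit through the Euler fluxes and the Picard sweep is sound but goes beyond what the paper's (purely formal) argument requires.
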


\begin{proof}
In the prediction step, the only updates directly affected by the Knudsen number, $\varepsilon>0$, are the updates for the heat flux, $h$, and the modified kurtosis, $k$. Taking the $\varepsilon\rightarrow 0^+$ limit of both \cref{eqn:bgk_pred_heat_update} and \cref{eqn:bgk_pred_kurt_update} yields:
\begin{equation}
	\vec{W^{n+\half (j)}_{i \, (:,4)}} \rightarrow \vec{0} \qquad \text{and} \qquad
	\vec{W^{n+\half (j)}_{i \, (:,5)}} \rightarrow \mat{C^2} \, \, \vec{\widehat{\mathcal S}^{(j)}_i}.
\end{equation}
This is precisely the desired effect: moments converge to their Maxwell-Boltzmann values.

In the correction step, the only update directly affected by the Knudsen number, $\varepsilon>0$, is \cref{eqn:collision_correct}.
Taking the $\varepsilon\rightarrow 0^+$ limit of this update yields:
\begin{equation}
 \vec{\Delta {\mathcal M}_{i(:,4:5)}} \rightarrow \vec{0} \qquad \text{and} \qquad
 \vec{Q_{i(:,4:5)}^{n+1}} \rightarrow  \mat{C^4} \, \, \vec{\widehat{S}_{i(:,4:5)}}.
\end{equation}
Again, this is precisely the desired effect: moments converge to their Maxwell-Boltzmann values.
\qedsymbol
\end{proof}

\section{HyQMOM-BGK numerical examples}
\label{sec:examples_bgk}
In this section, we apply the proposed HyQMOM-BGK scheme to several test cases.
In \S\ref{sub:conv_test_bgk} we verify the claimed orders of accuracy on a smooth manufactured solution with different
Knudsen numbers. These tests also show the scheme's uniform accuracy and order of accuracy as a function of the Knudsen number. In \S\ref{sub:shock_tube_bgk} we apply the scheme to {\it shock tube} initial data with different Knudsen numbers. 
These results demonstrate the ability of the non-oscillatory limiter to control unphysical oscillations. Also shown by these results
is the asymptotic-preserving (AP) property of the scheme for small $\varepsilon>0$; in particular, we include the exact Riemann solution
for the compressible Euler equations as a point of comparison.

\subsection{Manufactured solution convergence test}
\label{sub:conv_test_bgk}
We consider the following manufactured solution:
\begin{equation}
\begin{split}
\vec{\alpha^{\text{ms}}}(t,x) &:= \left( \rho^{\text{ms}}, \, u^{\text{ms}}, \, p^{\text{ms}}, \, h^{\text{ms}}, \, k^{\text{ms}} \right)(t,x), \\
\left( \rho^{\text{ms}}, \, p^{\text{ms}}, \, h^{\text{ms}}, \, k^{\text{ms}} \right) \left(t,x\right) &:= \left(
\rho_{\ve}, \, p_{\ve}, \, h_{\ve}, \, k_{\ve} \right) \sqrt{\pi} \left(2 - \cos\left(2 \pi \left( t - x \right) \right) \right), \\
u^{\text{ms}}\left(t,x\right) &:= \frac{1-3\ve}{4+8\ve},
\end{split}
\end{equation}
where
\begin{equation}
\begin{split}
\rho_{\ve} = \frac{1+2\ve}{2+2\ve}, \quad
p_{\ve} = \frac{2 + 33  \left(\ve + \ve^2\right)}{32 (1 + \ve) (1 + 2 \ve)}, \quad
h_{\ve} = -\frac{125 \ve}{128 \left(1 + 2 \ve\right)^2}, \\
r_{\ve} = \frac{12 + \left(\ve + \ve^2\right) \left(1021 + 2017  \left(\ve + \ve^2\right)\right)}{512 (1+ \ve) (1 + 2\ve)^3}, \quad
k_{\ve} = r_{\ve} - \frac{p^2_{\ve}}{\rho_{\ve}} - \frac{h_{\ve}^2}{p_{\ve}}.
\end{split}
\end{equation}
Note that this solution is $\varepsilon$-dependent and well-defined for all $0<\varepsilon<\infty$.
Since this is not an exact solution to HyQMOM-BGK, we need to augment \cref{eqn:HyQMOM_BGK} with an additional manufactured solution
source term:
\begin{align}
\vec{q}_{,t}+\vec{f}\left(\vec{q}\right)_{,x} = \frac{1}{\varepsilon} \, \vec{S^{\text{cons}}}\left(\vec{q}\right)+ \vec{s_q}, \quad
\vec{\alpha}_{,t}+\mat{B}\left(\vec{\alpha}\right) \, 
\vec{\alpha}_{,x} =  \frac{1}{\varepsilon} \, \vec{S^{\text{prim}}}\left(\vec{\alpha}\right)+ \vec{q}^{-1}_{,\vec{\alpha}}\left(\vec{\alpha^{\text{ms}}}\right)
\vec{s_{q}},
\end{align}
where
\begin{equation}
\begin{gathered}
\vec{s_q} = \left[ \pi^{3/2} \sin\bigl(2 \pi (t - x)\bigr) \right] \vec{v_1} 
+ \left[ \pi^{1/2} \Bigl(2 - \cos\bigl(2 \pi (t - x)\bigr) \Bigr) \right] \vec{v_2}, \\
\vec{v_1} = \left( A_1, \, A_2, \, A_3, \, A_4, \, A_7 \right), \quad
\vec{v_2} = \left( 0, \, 0, \, 0, \,  A_5, \,  A_6 \right).
\end{gathered}
\end{equation}
with
\begin{equation}
\begin{gathered}
A_1 = \frac{3 + 11 \ve}{4(1 + \ve)}, \quad
A_2 = \frac{1 - 33 \ve}{16 (1 + \ve)}, \quad
A_3 = \frac{5 (1  + 33 \ve)}{64 (1 + \ve)}, \quad
A_4 = \frac{3 - 809 \ve}{256 (1 + \ve)}, \\
A_5 = \frac{-125}{128 (1 + 2 \ve)^2}, \quad
A_6 = \frac{125 (1 + 2 \ve - 10 \ve^2)}{512 (1 + 2 \ve)^3}, \\
A_7 = \frac{76 + 3620 \ve + 521895 \ve^2 + 5285445 \ve^3 + 9544425 \ve^4 + 4794867 \ve^5}{1024 (1 + \ve) (2 + 33 \ve (1 + \ve))^2}.
\end{gathered}
\end{equation}

Convergence tables for the $\morder=4$ scheme are shown in \cref{table:qmom3deltas_bgk_1d_error}. Importantly, we consider
various values of the Knudsen number that span ten orders of magnitude: $\varepsilon = 10^{4}, \, 10^{2}, \, 10^{0}, \,  10^{-2}, \, 10^{-4}, \, \text{and} \, 10^{-6}$, and in each case, we achieve optimal convergence. These results confirm the asymptotic-preserving (AP) property
for small $\varepsilon>0$.

\begin{table}
\begin{center}
\begin{Large}
\begin{tabular}{|c||c|c||c|c||c|c|}
\hline
{\normalsize $N$} & {\normalsize $\varepsilon = 10^4$} & {\normalsize $\log_2\frac{\text{e}_{N/2}}{\text{e}_{N}}$}  & {\normalsize $\varepsilon = 10^2$} & {\normalsize $\log_2\frac{\text{e}_{N/2}}{\text{e}_{N}}$}  & {\normalsize $\varepsilon = 10^0$} & {\normalsize $\log_2\frac{\text{e}_{N/2}}{\text{e}_{N}}$} \\
\hline\hline
{\normalsize 10} & {\normalsize 1.181e-03} & -- & {\normalsize 1.193e-03} & -- & {\normalsize 1.426e-03} & -- \\\hline
{\normalsize 20} & {\normalsize 5.809e-05} & {\normalsize $4.346$} & {\normalsize 5.897e-05} & {\normalsize $4.339$} & {\normalsize 6.321e-05} & {\normalsize $4.496$} \\\hline
{\normalsize 40} & {\normalsize 3.541e-06} & {\normalsize $4.036$} & {\normalsize 3.515e-06} & {\normalsize $4.068$} & {\normalsize 3.655e-06} & {\normalsize $4.112$} \\\hline
{\normalsize 80} & {\normalsize 2.212e-07} & {\normalsize $4.001$} & {\normalsize 2.622e-07} & {\normalsize $3.745$} & {\normalsize 2.601e-07} & {\normalsize $3.813$} \\\hline
{\normalsize 160} & {\normalsize 1.376e-08} & {\normalsize $4.007$} & {\normalsize 1.622e-08} & {\normalsize $4.015$} & {\normalsize 1.529e-08} & {\normalsize $4.088$} \\\hline
{\normalsize 320} & {\normalsize 8.592e-10} & {\normalsize $4.001$} & {\normalsize 9.983e-10} & {\normalsize $4.022$} & {\normalsize 8.986e-10} & {\normalsize $4.089$} \\\hline
\hline\hline\hline\hline
{\normalsize $N$} & {\normalsize $\varepsilon = 10^{-2}$} & {\normalsize $\log_2\frac{\text{e}_{N/2}}{\text{e}_{N}}$}  & {\normalsize $\varepsilon = 10^{-4}$} & {\normalsize $\log_2\frac{\text{e}_{N/2}}{\text{e}_{N}}$}  & {\normalsize $\varepsilon = 10^{-6}$} & {\normalsize $\log_2\frac{\text{e}_{N/2}}{\text{e}_{N}}$} \\
\hline\hline
{\normalsize 10} & {\normalsize 1.327e-03} & -- & {\normalsize 1.644e-03} & -- & {\normalsize 1.660e-03} & -- \\\hline
{\normalsize 20} & {\normalsize 6.608e-05} & {\normalsize $4.328$} & {\normalsize 6.826e-05} & {\normalsize $4.590$} & {\normalsize 6.861e-05} & {\normalsize $4.597$} \\\hline
{\normalsize 40} & {\normalsize 4.040e-06} & {\normalsize $4.032$} & {\normalsize 4.222e-06} & {\normalsize $4.015$} & {\normalsize 4.148e-06} & {\normalsize $4.048$} \\\hline
{\normalsize 80} & {\normalsize 2.537e-07} & {\normalsize $3.993$} & {\normalsize 2.575e-07} & {\normalsize $4.035$} & {\normalsize 2.557e-07} & {\normalsize $4.020$} \\\hline
{\normalsize 160} & {\normalsize 1.572e-08} & {\normalsize $4.012$} & {\normalsize 1.622e-08} & {\normalsize $3.989$} & {\normalsize 1.601e-08} & {\normalsize $3.998$} \\\hline
{\normalsize 320} & {\normalsize 9.929e-10} & {\normalsize $3.985$} & {\normalsize 1.006e-09} & {\normalsize $4.011$} & {\normalsize 1.008e-09} & {\normalsize $3.989$} \\\hline
\end{tabular} 
\caption{(\S\ref{sub:conv_test_bgk}: HyQMOM-BGK manufactured solution problem) Relative $L^2$ errors for a manufactured solution example for the one-dimensional HyQMOM equations with a BGK collision operator. 
The errors are computed for various values of the Knudsen number: $\varepsilon = 10^{4}, \, 10^{2}, \, 10^{0}, \,  10^{-2}, \, 10^{-4}, \, \text{and} \, 10^{-6}$. In each case, we use the fourth order
method: $\morder=4$.}
\label{table:qmom3deltas_bgk_1d_error}
\end{Large}
\end{center}
\end{table}

\subsection{BGK shock tube problem}
\label{sub:shock_tube_bgk}
Consider the Riemann problem for  \eqref{eqn:HyQMOM_BGK}--\eqref{eqn:HyQMOM_BGK_source} with the following initial data at $t=0$:
\begin{equation}
\label{eqn:bgk_shock_init}
\bigl(\rho, u, p, \heat, k \bigr)(t=0,x) = 
\begin{cases}
\bigl(1.0, \, 0.0, \, 1.0, \, 0.0, \, 2.0 \bigr) & \quad x < 0, \\
\bigl(0.125, \, 0.0, \, 0.1, \, 0.0, \, 0.16 \, \bigr) & \quad x > 0,
\end{cases}
\end{equation}
on $x\in[-1, 1]$ with extrapolation boundary conditions. This is the standard Sod shock tube problem \cite{article:Sod78}, which is ubiquitous in shock-capturing literature, and
is also often found as a standard test for Boltzmann-BGK solvers (e.g., see \cite{article:Ben08}).

We consider three different values of the Knudsen number: 
(a) $\varepsilon = 10^{-2}$, 
(b) $\varepsilon = 10^{-3}$, and (c) $\varepsilon = 10^{-4}$. In each case we run the $\morder=4$ scheme with $M_{\text{elem}} = 200$; we also compare in each case the HyQMOM-BGK solution to the exact solution for the compressible Euler equations \cref{eqn:euler1d} (e.g., 
see Chapter 14 of LeVeque \cite{book:Le02} for a derivation).
We used the following values of ${\mathcal A}_0$ in formula \cref{eqn:limiter_min_max_bounds}: (a) ${\mathcal A}_0 = 50$ 
for $\varepsilon = 10^{-2}$, (b) ${\mathcal A}_0 = 50$
for $\varepsilon = 10^{-3}$, and 
(c) ${\mathcal A}_0 = 350$
for $\varepsilon = 10^{-4}$.

Figure \ref{fig:bgk_shocktube_1} displays the $\varepsilon = 10^{-2}$ numerical simulation at $t=0.28$ showing the
(a) density: $\rho(t,x)$, (b) macroscopic velocity: $u(t,x)$, (c) pressure: $p(t,x)$, and (d) heat flux: $\heat(t,x)$.
At this Knudsen number, the solution is still significantly different than the compressible Euler solution, which is also shown
in each panel. The results are consistent with fully kinetic solutions \cite{article:Ben08}.

Figure \ref{fig:bgk_shocktube_2} displays the $\varepsilon = 10^{-3}$ numerical simulation at $t=0.28$ showing the
(a) density: $\rho(t,x)$, (b) macroscopic velocity: $u(t,x)$, (c) pressure: $p(t,x)$, and (d) heat flux: $\heat(t,x)$.
At this Knudsen number, the solution looks closer to the compressible Euler solution, which is also shown
in each panel. The results are again consistent with fully kinetic solutions \cite{article:Ben08}.

Figure \ref{fig:bgk_shocktube_3} displays the $\varepsilon = 10^{-4}$ numerical simulation at $t=0.28$ showing the
(a) density: $\rho(t,x)$, (b) macroscopic velocity: $u(t,x)$, (c) pressure: $p(t,x)$, and (d) heat flux: $\heat(t,x)$.
At this Knudsen number, the solution is very close to the compressible Euler solution, which is also shown
in each panel. The results are again consistent with fully kinetic solutions \cite{article:Ben08}.

\begin{figure}
\begin{tabular}{cc}
(a)\includegraphics[width=.44\linewidth]{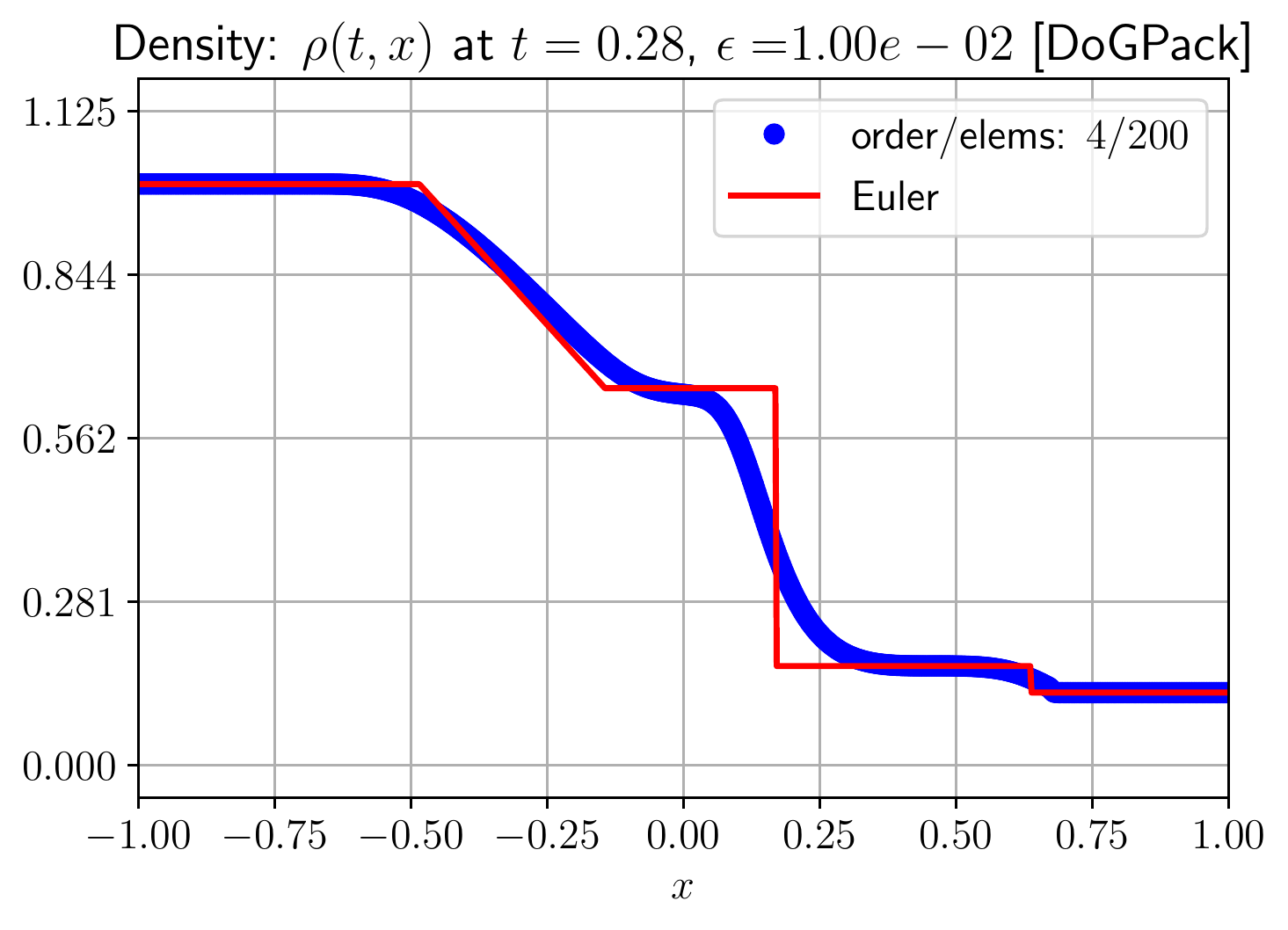} &
(b)\includegraphics[width=.44\linewidth]{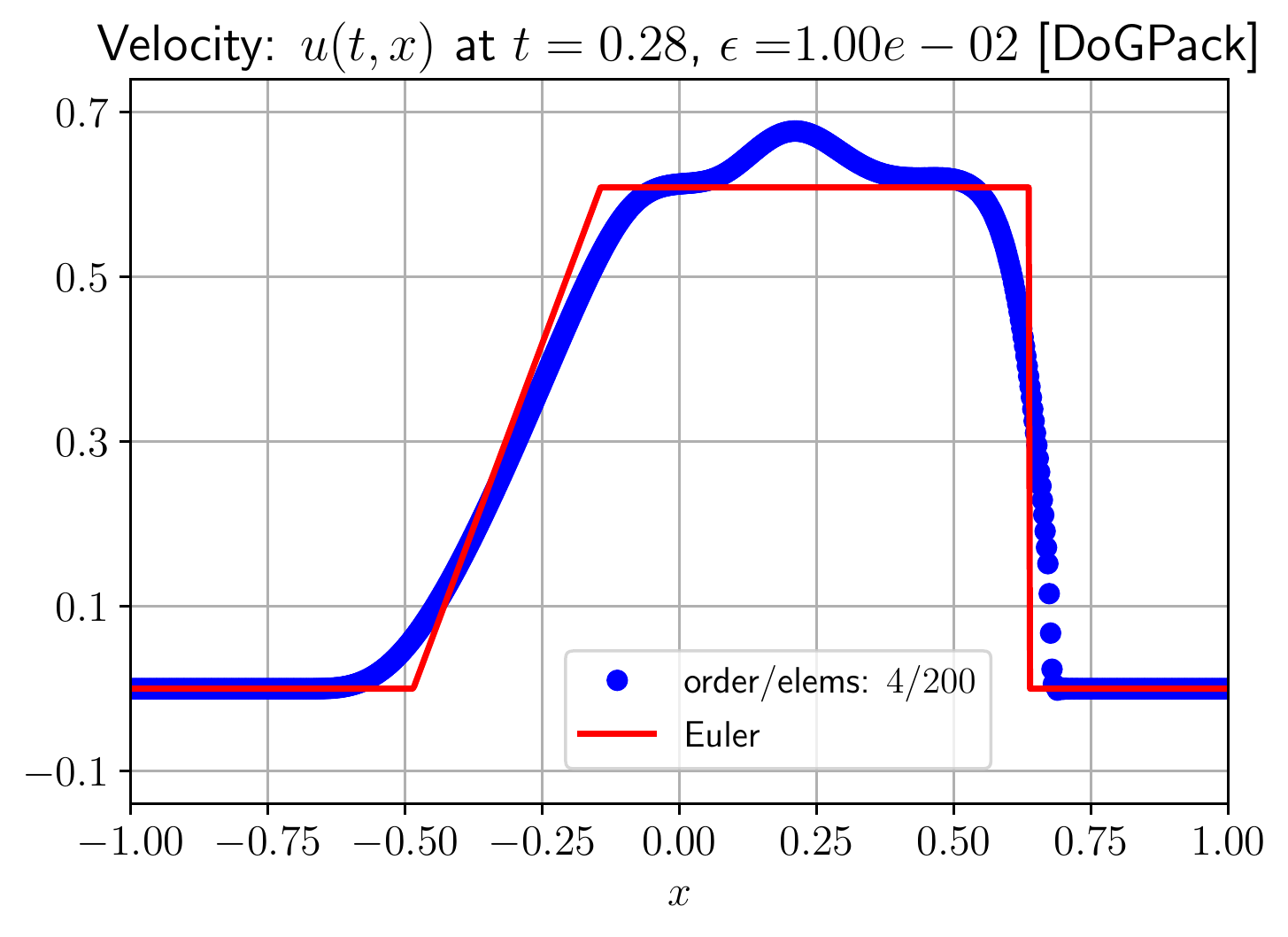} \\
(c)\includegraphics[width=.44\linewidth]{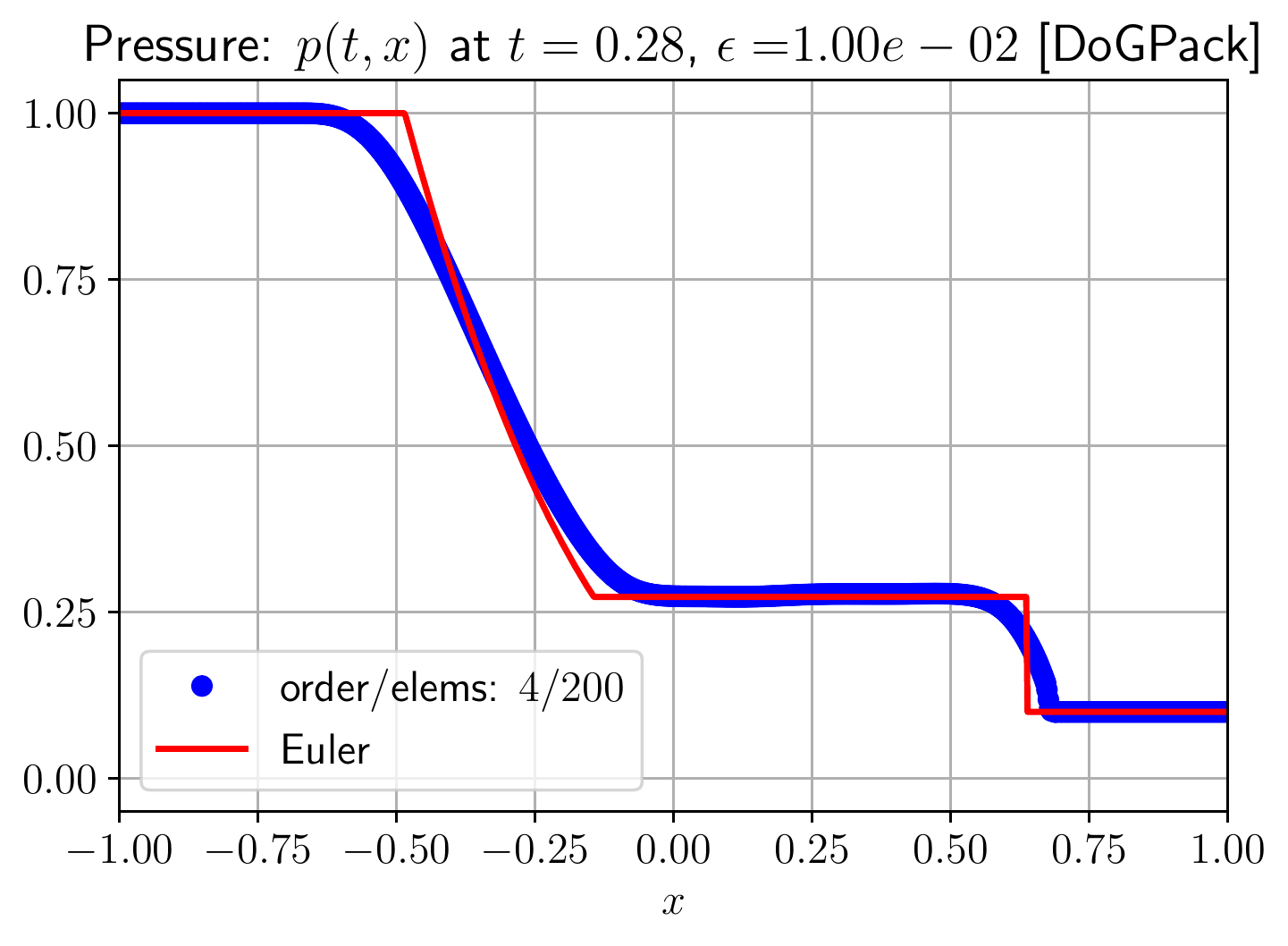} &
(d)\includegraphics[width=.44\linewidth]{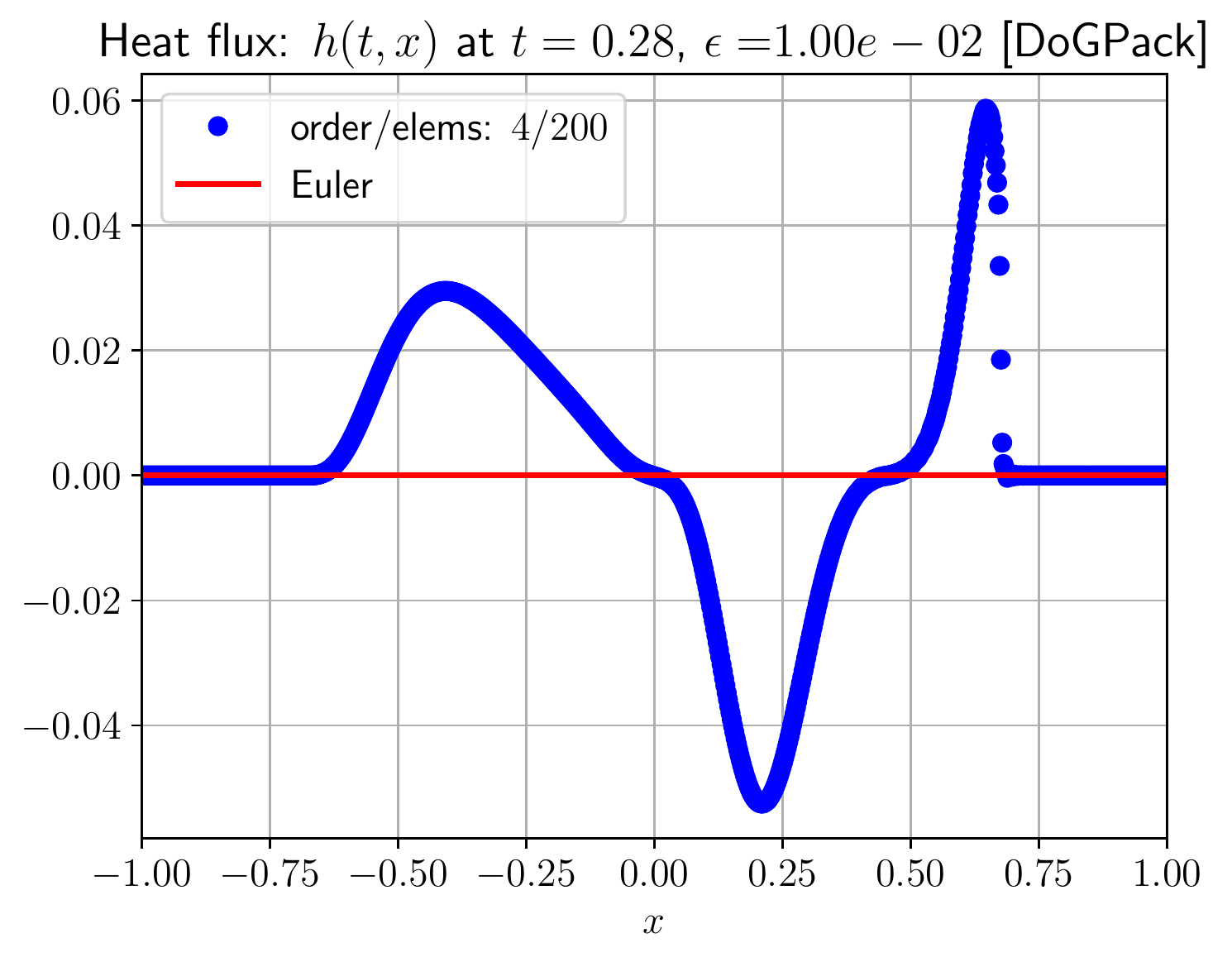}
\end{tabular}
\caption{(\S\ref{sub:shock_tube_bgk}: BGK shock tube problem with $\varepsilon = 10^{-2}$) Numerical solution of shock tube problem
on $x \in [-1, 1]$ with initial conditions given
by \eqref{eqn:bgk_shock_init}. 
Shown are the results from a simulation run with 
the proposed $\morder=4$ scheme
with 200 elements (shown as blue dots) and the exact solution of the compressible Euler equations with the same initial conditions (shown as a solid red line). For the
$\morder=4$ scheme, we are plotting four points-per-element in order
to show the intra-element solution structure. 
The panels show the primitive variables: (a) density: $\rho(t,x)$, (b) macroscopic velocity: $u(t,x)$, (c) pressure: $p(t,x)$, and (d) heat flux: $\heat(t,x)$. \label{fig:bgk_shocktube_1}}
\end{figure}

\begin{figure}
\begin{tabular}{cc}
(a)\includegraphics[width=.44\linewidth]{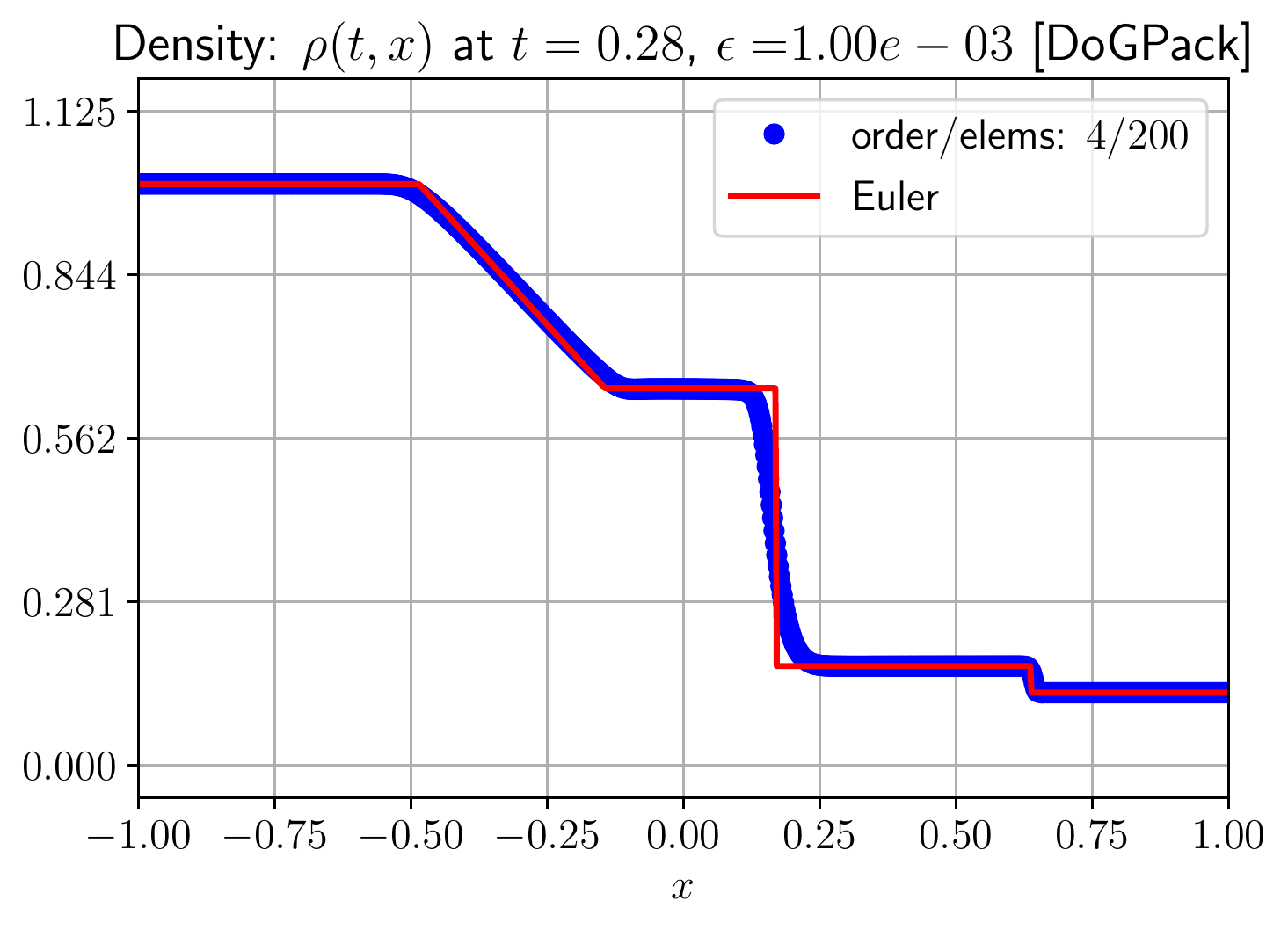} &
(b)\includegraphics[width=.44\linewidth]{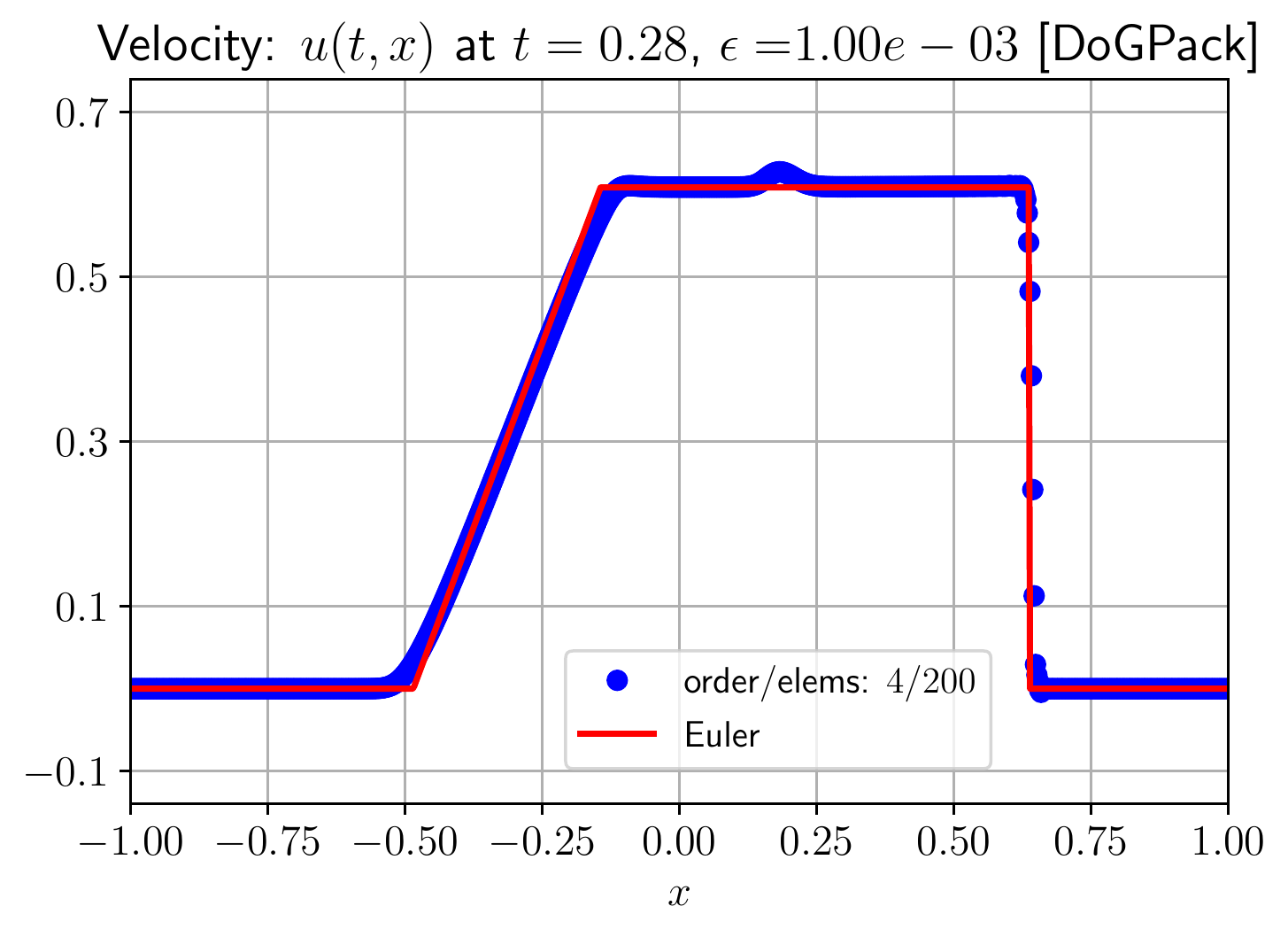} \\
(c)\includegraphics[width=.44\linewidth]{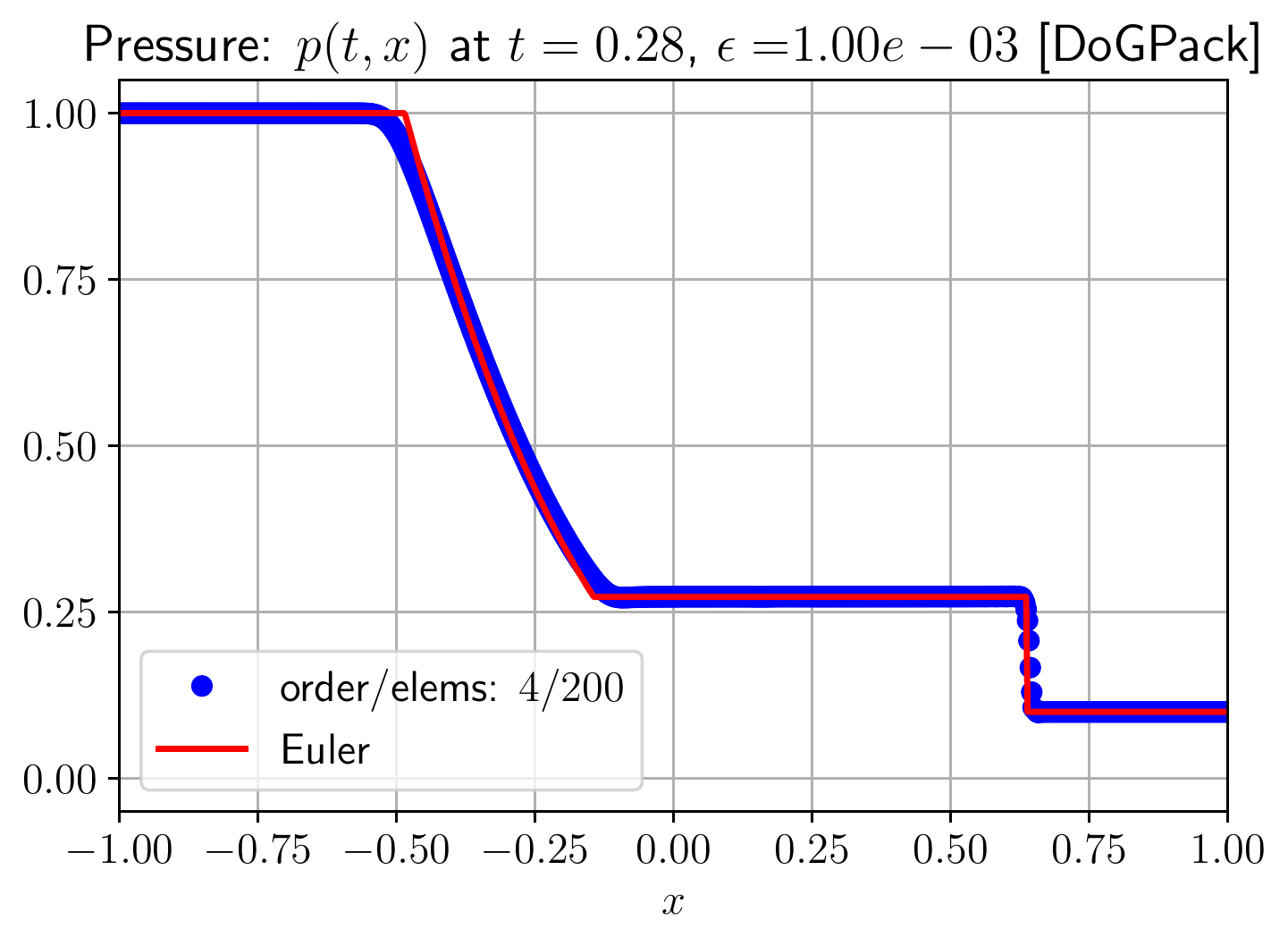} &
(d)\includegraphics[width=.44\linewidth]{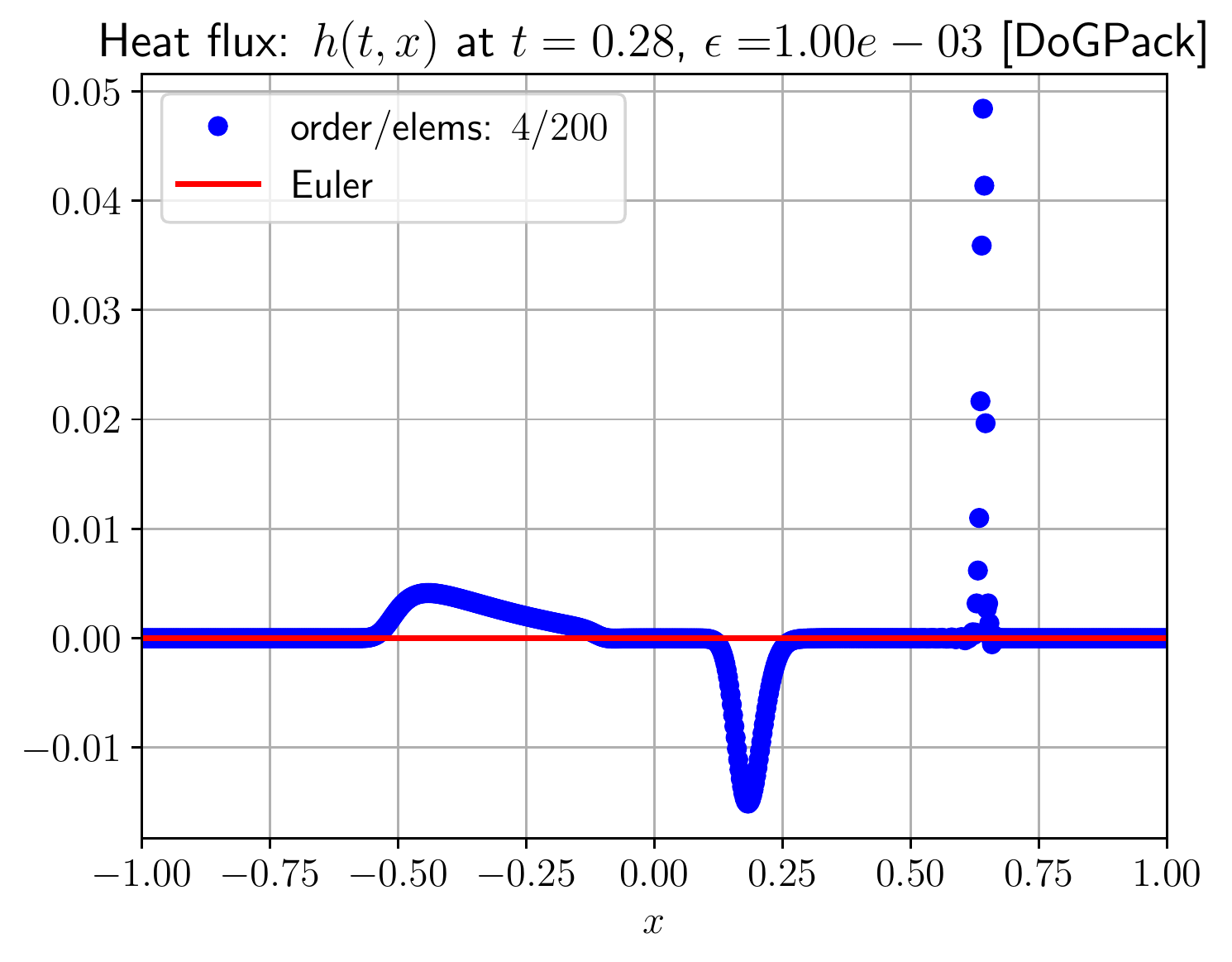}
\end{tabular}
\caption{(\S\ref{sub:shock_tube_bgk}: BGK shock tube problem with $\varepsilon = 10^{-3}$) Numerical solution of shock tube problem
on $x \in [-1, 1]$ with initial conditions given
by \eqref{eqn:bgk_shock_init}. 
Shown are the results from a simulation run with 
the proposed $\morder=4$ scheme
with 200 elements (shown as blue dots) and the exact solution of the compressible Euler equations with the same initial conditions (shown as a solid red line). For the
$\morder=4$ scheme, we are plotting four points per element in order
to show the intra-element solution structure. 
The panels show the primitive variables: (a) density: $\rho(t,x)$, (b) macroscopic velocity: $u(t,x)$, (c) pressure: $p(t,x)$, and (d) heat flux: $\heat(t,x)$. \label{fig:bgk_shocktube_2}}
\end{figure}

\begin{figure}
\begin{tabular}{cc}
(a)\includegraphics[width=.44\linewidth]{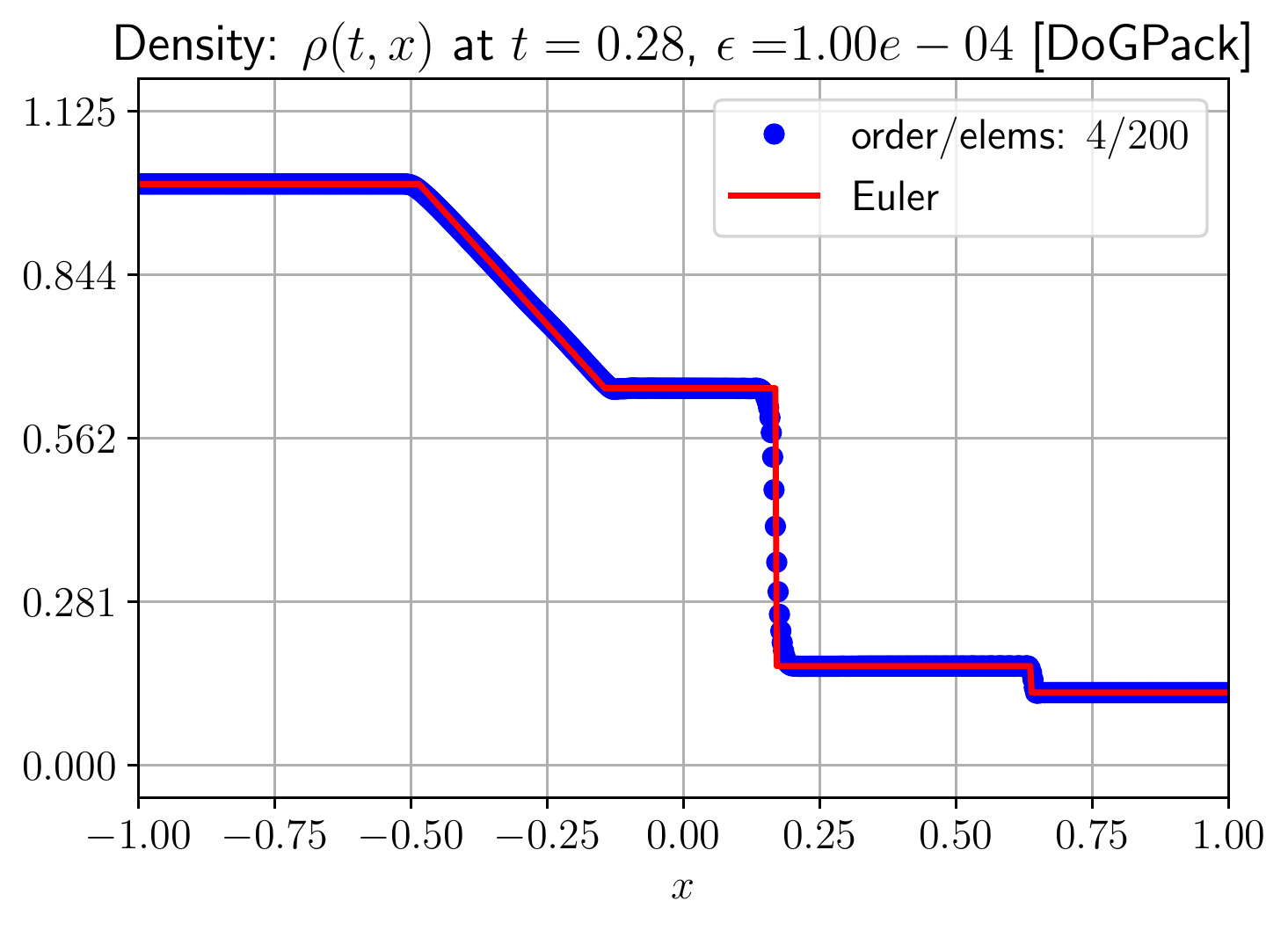} &
(b)\includegraphics[width=.44\linewidth]{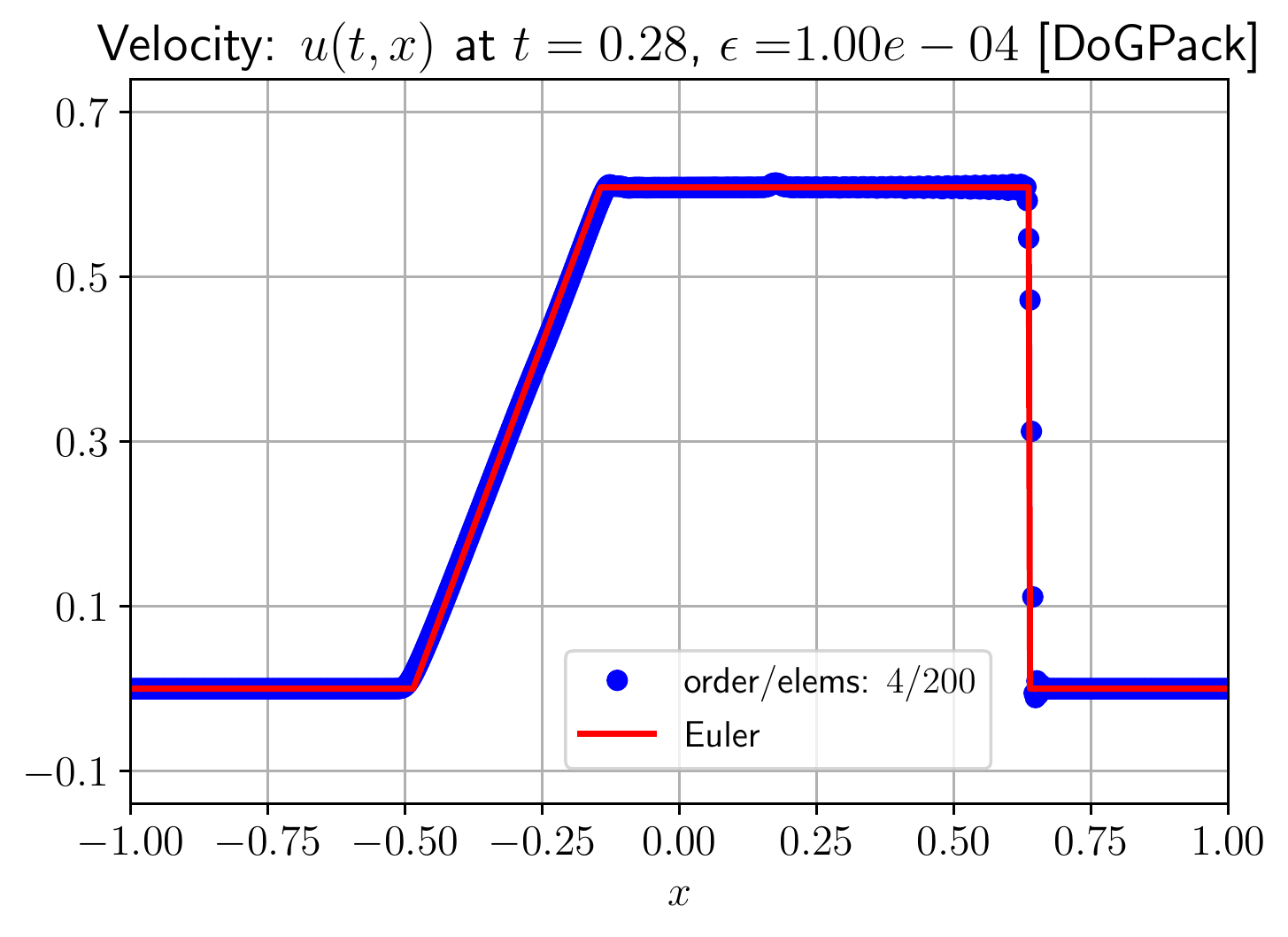} \\
(c)\includegraphics[width=.44\linewidth]{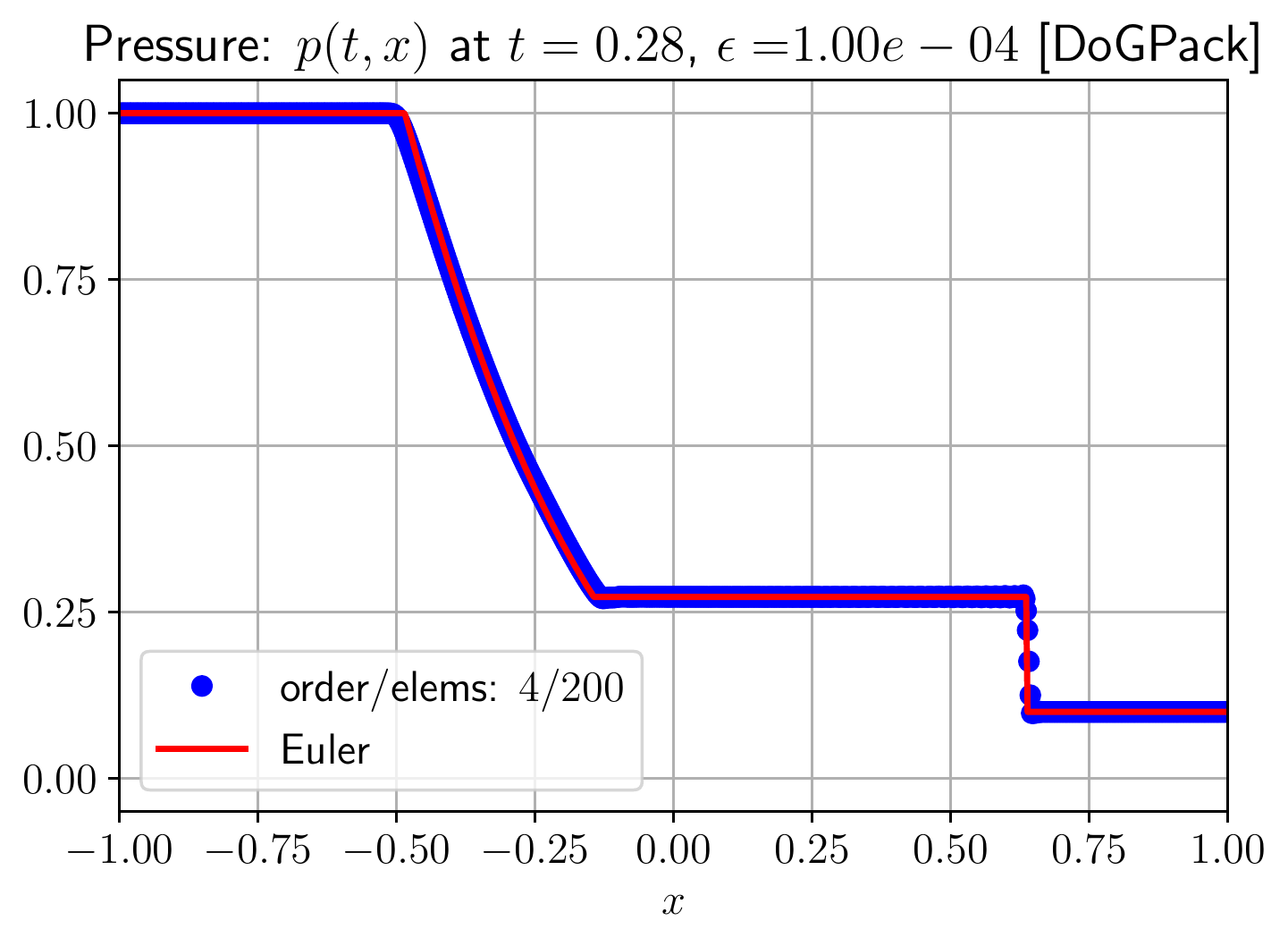} &
(d)\includegraphics[width=.44\linewidth]{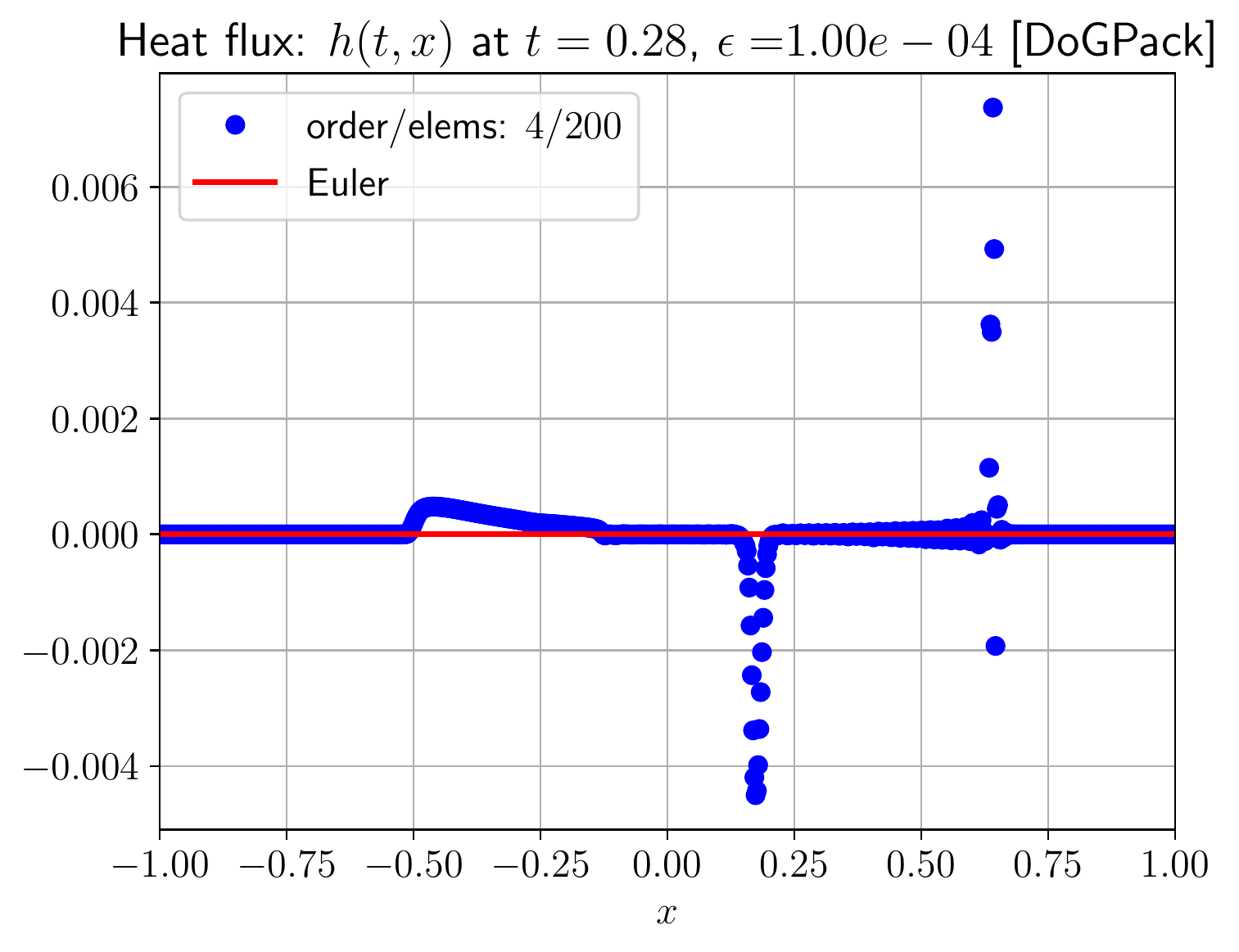}
\end{tabular}
\caption{(\S\ref{sub:shock_tube_bgk}: BGK shock tube problem with $\varepsilon = 10^{-4}$) Numerical solution of shock tube problem
on $x \in [-1, 1]$ with initial conditions given
by \eqref{eqn:bgk_shock_init}. 
Shown are the results from a simulation run with 
the proposed $\morder=4$ scheme
with 200 elements (shown as blue dots) and the exact solution of the compressible Euler equations with the same initial conditions (shown as a solid red line). For the
$\morder=4$ scheme, we are plotting four points-per-element in order
to show the intra-element solution structure. 
The panels show the primitive variables: (a) density: $\rho(t,x)$, (b) macroscopic velocity: $u(t,x)$, (c) pressure: $p(t,x)$, and (d) heat flux: $\heat(t,x)$. \label{fig:bgk_shocktube_3}}
\end{figure}

\section{Conclusions}
\label{sec:conclusions}

In this work, we considered a particular moment closure called HyQMOM (the hyperbolic quadrature-based method of moments), which was originally introduced by Fox, Laurent, Vie \cite{article:FoxLauVie2018} and further studied by Johnson \cite{thesis:Johnson2017} and Wiersma \cite{thesis:Wiersma2019}. 
Quadrature-based method of moments (QMOM), including the HyQMOM variant, are a promising class of approximation techniques for reducing kinetic equations to fluid equations that are valid beyond thermodynamic equilibrium.
In particular, the goal of the present work was to develop high-order discontinuous Galerkin schemes and corresponding limiters that control both unphysical oscillations and eliminate positivity violations.

The numerical scheme developed is based on the Lax-Wendroff discontinuous Galerkin scheme
 introduced by Qiu, Dumbser, and Shu \cite{article:Qiu05}, with the predictor-corrector interpretation developed by Gassner et al. \cite{article:GasDumHinMun2011}, and further
 refinements developed by Felton et al. \cite{article:REU2018}. 
 The resulting numerical method is performed in two phases at each time step.

\begin{description}
\item[{\bf Prediction step.}]
The equation and numerical solution are written in the primitive variables in this phase. In the space-time DG approximation, which is applied to each element, integration-by-parts is only performed on the time variable. The result is a system of local nonlinear equations on each element. These equations are solved using a Picard iteration, which provides a sufficiently accurate solution after $\morder-1$ iterations, where $\morder$ is the order of accuracy of the method. 

\medskip 

\item[{\bf Correction step.}]
The correction is a straightforward explicit update based on the time-integral of the evolution equation in conservation form, where the space-time prediction replaces all instances of the exact solution.
\end{description}
Several limiters were applied to the scheme to guarantee positivity and achieve solutions without unphysical oscillations.
\begin{description}
\item [{\bf Limiter I: Prediction step positivity limiter.}]
This limiter is completely local and minimally damps high-order corrections to the primitive variables to get pointwise positivity of the predicted density, pressure, and modified kurtosis on all space-time quadrature points (Gauss-Legendre + edges). The limiter is applied once after each Picard iteration, meaning it is applied a total of $\morder-1$ times per time step.

\medskip

\item [{\bf Limiter II: Correction step positivity limiter on cell average.}]
This limiter is applied once per time step and blends high-order numerical fluxes with positivity preserving low-order fluxes in such a way as to preserve the positivity of the corrected element averages of density, pressure, and modified kurtosis. This limiter is applied once per time step.

\medskip

\item [{\bf Limiter III: Correction step positivity limiter on quadrature points.}] 
This limiter is similar to Limiter I and involves minimally damping the high-order corrections to preserve the positivity of the corrected density, pressure, and modified kurtosis, on all spatial quadrature points (Gauss-Legendre + edges). This limiter is applied once per time step at the end of the step.

\medskip

\item [{\bf Limiter IV: Oscillation Limiter.}]
This limiter damps the solution if the primitive solution variables on the current element significantly exceed the primitive solution variables on neighboring
elements. This limiter is applied once per time step at the end of the time step.  
\end{description}

\medskip

In the collisionless regime, the proposed high-order method and the limiting strategy were tested on both smooth and Riemann problems. The smooth solution was used to perform convergence tests that demonstrated the expected orders of accuracy. The Riemann data tests clearly showed that the limiters were successful in damping unphysical oscillations without adversely diffusing the solution and preserving the positivity of the relevant variables.  

Once the collisionless method is fully developed, we propose a version of the scheme for HyQMOM with a BGK collision operator. We carefully show how to handle the collision operator in both the prediction and collision steps to achieve an asymptotic-preserving (AP) property in the 
high-collision limit. Several numerical examples are provided to validate the scheme both for smooth solutions and Riemann initial data.
The asymptotic-preserving property is validated for smooth solutions and Riemann initial data.

Future work will focus on extending this work to higher dimensions; perhaps using the conditional moment strategy of 
\cite{article:FoxLauVie2018,article:PaDeFo2019}, or some other higher-dimensional extension.

\appendix

\section{Appendix}
\label{sec:appendix}

\begin{lemma}[Hermite interpolation]
\label{lemma:hermite}
Consider the Hermite interpolation problem of interpolating the function $f(v) = v^{2N}$ with
a polynomial of degree $2N-1$:
\begin{equation}
\label{eqn:hermite_qmom1}
P_{2N-1}(v) = a_0 + a_1 v + a_2 v^2 + \cdots + a_{2N-1} v^{2N-1} = \sum_{{j}=0}^{2N-1} a_{{j}} v^{j},
\end{equation}
with interpolating conditions for ${\ell}=1,2,\ldots,N$:
\begin{equation}
\label{eqn:hermite_qmom2}
P_{2N-1}(\mu_{\ell}) = f(\mu_{\ell}) = \mu_{\ell}^{2N} \qquad \text{and} \qquad
P'_{2N-1}(\mu_{\ell}) = f'(\mu_{\ell})  = 2N\mu_{\ell}^{2N-1},
\end{equation}
where
\begin{equation}
\mu_1 < \mu_2 < \cdots < \mu_{N-1} < \mu_{N}.
\end{equation}
Applying these conditions yields the following formula for the polynomial coefficients:
\begin{equation}
\begin{bmatrix}[1.5]
a_0 \\  \vdots \\ a_{N-1} \\ a_{N} \\ \vdots \\ a_{2N-1}
\end{bmatrix} = 
\begin{bmatrix}[1.5]
1 & \mu_1 & \mu_1^2 & \mu_1^3 & \cdots & \mu_1^{2N-1} \\
\vdots & \vdots & \vdots & & & \vdots \\
1 & \mu_N & \mu_N^2 & \mu_N^3 & \cdots & \mu_N^{2N-1} \\
0 & 1 & 2 \mu_1 & 3 \mu_1^2 & \cdots & (2N-1) \mu_1^{2N-2} \\
\vdots & \vdots & \vdots &  \vdots & & \vdots \\
0 & 1 & 2 \mu_N & 3 \mu_N^2 & \cdots & (2N-1) \mu_N^{2N-2}
\end{bmatrix}^{-1}
\begin{bmatrix}[1.5]
\mu_1^{2N} \\  \vdots \\ \mu_N^{2N} \\ 2N \mu_1^{2N-1} \\  \vdots \\ 2N\mu_N^{2N-1}
\end{bmatrix}.
\end{equation}
\end{lemma}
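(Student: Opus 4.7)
The plan is to treat this as a direct application of the uniqueness of Hermite interpolation on distinct nodes, followed by writing out the $2N$ interpolation conditions in matrix form. First, I would evaluate \eqref{eqn:hermite_qmom1} and its derivative at a generic node $\mu_\ell$, obtaining
\begin{equation*}
P_{2N-1}(\mu_\ell) = \sum_{j=0}^{2N-1} a_j \mu_\ell^{j}, \qquad
P'_{2N-1}(\mu_\ell) = \sum_{j=1}^{2N-1} j \, a_j \mu_\ell^{j-1}.
\end{equation*}
Substituting into \eqref{eqn:hermite_qmom2} yields the $2N$ linear equations $\sum_j \mu_\ell^j a_j = \mu_\ell^{2N}$ for $\ell=1,\ldots,N$, and $\sum_j j \mu_\ell^{j-1} a_j = 2N \mu_\ell^{2N-1}$ for $\ell=1,\ldots,N$. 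Stacking the value-conditions in the top $N$ rows and the derivative-conditions in the bottom $N$ rows produces exactly the $2N \times 2N$ coefficient matrix and right-hand side displayed in the lemma.

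The only nontrivial step is asserting that this matrix is invertible so that the formula $\vec{a} = \mat{M}^{-1} \vec{b}$ is meaningful. I would invoke the classical result that Hermite interpolation with $N$ distinct nodes and prescribed values and first derivatives admits a unique polynomial of degree at most $2N-1$; equivalently, the associated confluent Vandermonde-like matrix above is nonsingular whenever $\mu_1 < \mu_2 < \cdots < \mu_N$. A self-contained argument is also immediate: if the matrix had nontrivial kernel, there would exist a nonzero polynomial of degree $\le 2N-1$ vanishing to order $2$ at each of the $N$ distinct nodes $\mu_\ell$, hence having at least $2N$ roots counted with multiplicity, forcing it to be identically zero.

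I expect no real obstacle here since the result is essentially a restatement of standard Hermite interpolation; the bookkeeping of ordering the rows so that the displayed block structure (value rows above, derivative rows below) matches exactly what is written in the lemma is the only care required. I would conclude by noting that, since the system is square and nonsingular, the coefficient vector is uniquely determined by inverting the stated matrix against the stated right-hand side, which is precisely the claim.
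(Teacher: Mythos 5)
Your proof is correct and follows essentially the same route as the paper, which simply states that the result follows from applying the interpolation conditions to $P_{2N-1}(v)$. Your additional justification of the nonsingularity of the confluent Vandermonde matrix (via the root-counting argument) is a welcome detail that the paper leaves implicit.
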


\begin{proof}
The claimed result follows directly from applying the interpolating conditions to the
polynomial $P_{2N-1}(v)$.
\qedsymbol
\end{proof}

\begin{lemma}[Moment gradient operator I]
\label{lemma:gradient1}
Let $f\left(\vec{\omega},\vec{\mu} \right): \reals^N \times \reals^N \mapsto \reals$ be a continuously differentiable function,
where $\vec{\omega},\vec{\mu} \in \reals^N$ satisfy the moment condition \eqref{eqn:qmom_missing_moment}.
The gradient of $f$ with respect to the moments, $\vec{\mom}= \left(\mom_0, \mom_1, 
\ldots, \mom_{2N-1} \right)$, is given by
\begin{equation}
\label{eqn:moment_gradient1}
\nabla_{\vec{\mom}} f\left(\vec{\omega},\vec{\mu} \right) = 
\mat{B}^{-1} \nabla_{\left(\vec{\omega},\vec{\mu} \right)} f\left(\vec{\omega},\vec{\mu} \right),
\end{equation}
where
\begin{align}
\label{eqn:moment_gradient2}
\nabla_{\vec{\mom}} :=& \left( \frac{\partial}{\partial \mom_0}, \, 
\frac{\partial}{\partial \mom_1}, \ldots, \frac{\partial}{\partial \mom_{2N-1}} \right), \\
\label{eqn:moment_gradient3}
\nabla_{\left(\vec{\omega},\vec{\mu} \right)} :=& 
\left( \frac{\partial}{\partial \omega_1}, \ldots, 
\frac{\partial}{\partial \omega_N}, \frac{\partial}{\partial \mu_1}, \ldots, 
\frac{\partial}{\partial \mu_N} \right), \\
\label{eqn:moment_gradient4}
\mat{B} := \frac{\partial \vec{\mom}}{\partial \left(\vec{\omega}, \vec{\mu} \right)}
 =&  \begin{bmatrix}[1.5]
1 & \mu_1 & \mu_1^2 &  \cdots & \mu_1^{2N-1} \\
\vdots & \vdots & \vdots & & \vdots \\
1 & \mu_N & \mu_N^2 & \cdots & \mu_N^{2N-1} \\
0 & \omega_1 & 2 \omega_1 \mu_1 & \cdots & (2N-1) \omega_1 \mu_1^{2N-2} \\
\vdots & \vdots & \vdots & & \vdots \\
0 & \omega_N & 2 \omega_N \mu_N & \cdots & (2N-1) \omega_N \mu_N^{2N-2}
\end{bmatrix}.
\end{align}
\end{lemma}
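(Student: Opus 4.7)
The plan is to prove this as a direct application of the chain rule combined with the inverse function theorem, with the only real care being to track the transpose convention implied by the display of $\mat{B}$ in \eqref{eqn:moment_gradient4}.

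First I would note that in the matrix $\mat{B}$ as displayed, row $i$ lists the partial derivatives of $\mom_0,\mom_1,\ldots,\mom_{2N-1}$ with respect to the $i$th parameter (reading $(\vec{\omega},\vec{\mu})$ as $(\omega_1,\ldots,\omega_N,\mu_1,\ldots,\mu_N)$); for example row $1$ is $\partial \mom_\ell/\partial\omega_1=\mu_1^\ell$ for $\ell=0,\ldots,2N-1$. Thus $\mat{B}_{(i,j)}=\partial\mom_{j-1}/\partial x_i$, so $\mat{B}$ is the transpose of the standard Jacobian $\mat{J}:=\partial\vec{\mom}/\partial(\vec{\omega},\vec{\mu})$; that is, $\mat{J}=\mat{B}^T$.

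Next I would check that $\mat{B}$ is invertible whenever the abscissas are distinct and the weights are nonzero. This is precisely the determinant condition already encountered in the Hermite interpolation problem of \cref{lemma:hermite}: factoring a diagonal matrix with entries $(1,\ldots,1,\omega_1,\ldots,\omega_N)$ out of the lower half of $\mat{B}$ reduces the determinant to that of the confluent Vandermonde matrix in \cref{lemma:hermite}, which is nonzero under the standing assumption $\mu_1<\mu_2<\cdots<\mu_N$ and $\omega_j\ne 0$. Invertibility of the Jacobian $\mat{J}=\mat{B}^T$ then lets me invoke the inverse function theorem to express $(\vec{\omega},\vec{\mu})$ locally as a continuously differentiable function of $\vec{\mom}$.

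With the inverse in hand, the chain rule applied to the composition $\vec{\mom}\mapsto(\vec{\omega},\vec{\mu})\mapsto f$ yields, componentwise,
\begin{equation}
\frac{\partial f}{\partial \mom_k}=\sum_{i=1}^{2N}\frac{\partial x_i}{\partial \mom_k}\,\frac{\partial f}{\partial x_i},\qquad k=0,1,\ldots,2N-1,
\end{equation}
where $x_i$ denotes the $i$th component of $(\vec{\omega},\vec{\mu})$. Treating both gradients as column vectors in the orderings \eqref{eqn:moment_gradient2}--\eqref{eqn:moment_gradient3}, this is
\begin{equation}
\nabla_{\vec{\mom}}f=\left(\frac{\partial(\vec{\omega},\vec{\mu})}{\partial\vec{\mom}}\right)^{T}\nabla_{(\vec{\omega},\vec{\mu})}f=(\mat{J}^{-1})^{T}\nabla_{(\vec{\omega},\vec{\mu})}f.
\end{equation}
Substituting $\mat{J}=\mat{B}^T$ gives $(\mat{J}^{-1})^{T}=((\mat{B}^T)^{-1})^{T}=\mat{B}^{-1}$, which yields exactly \eqref{eqn:moment_gradient1} and completes the proof.

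The only place requiring any real care is the transpose bookkeeping; there is no analytic difficulty, only a convention check. Once the identification $\mat{J}=\mat{B}^{T}$ is made, the rest is an immediate chain rule calculation and the formula drops out. The invertibility step can be dispatched in one line by citing the confluent Vandermonde structure already used in \cref{lemma:hermite}.
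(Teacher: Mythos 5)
Your proposal is correct and follows the same route as the paper, which simply states that the result "follows directly from the chain rule applied to the moment condition." You have fleshed out the details the paper leaves implicit — the identification $\mat{B}=\mat{J}^T$, the invertibility via the confluent Vandermonde structure, and the transpose bookkeeping in the chain rule — all of which check out.
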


\begin{proof}
The results follows directly from the chain rule applied to the moment condition: \eqref{eqn:qmom_missing_moment}.
\qedsymbol
\end{proof}

\begin{lemma}[Moment gradient operator II]
\label{lemma:gradient2}
The moment gradient as defined by \eqref{eqn:moment_gradient1}--\eqref{eqn:moment_gradient4}
applied to the function $f\left(\vec{\omega},\vec{\mu} \right) = \mu_{\ell}$ for some ${\ell}=1,2,\ldots,N$
is the following vector:
\begin{equation}
\label{eqn:lemma_bvec}
\reals^{2N} \ni \vec{b} := \Bigl[ b_0, \, b_1, \, \ldots, b_{2N-1} \Bigr]^T =
\nabla_{\vec{\mom}} \, \mu_{\ell} = \mat{B}^{-1} \,  \nabla_{\left(\vec{\omega},\vec{\mu} \right)} \, \mu_{\ell} = \mat{B}^{-1} \, \vec{e_{N+{\ell}}},
\end{equation}
where $\vec{e_{N+{\ell}}} \in \reals^{2N}$ is a vector with a value of one in component $N+{\ell}$ and a value of zero in all other components.

Furthermore, $\vec{b}$ as defined above can be interpreted as the vector of coefficients of the
following polynomial:
\begin{equation}
\label{eqn:lemma_qpoly1}
Q_{2N-1}(v) := b_0 + b_1 v + b_2 v^2 + \cdots + b_{2N-1} v^{2N-1} 
= \sum_{j=0}^{2N-1} b_{{j}} v^{j},
\end{equation}
which satisfies all of the following conditions:
\begin{equation}
\label{eqn:lemma_qpoly2}
Q_{2N-1}\left(\mu_{m} \right) = 0 \qquad \text{and} \qquad
Q_{2N-1}'\left(\mu_{m} \right) = \frac{1}{\omega_{\ell}} \, \delta^{{\ell}}_{m} \qquad \text{for}
\qquad m=1,2,\ldots,N.
\end{equation}
The polynomial $Q_{2N-1}$ can be explicitly written as follows:
\begin{equation}
\label{eqn:lemma_qpoly3}
  Q_{2N-1}(v) = \frac{\left(v-\mu_{\ell}\right)}{\omega_{\ell}} \left[ {\displaystyle\prod_{\substack{j=1 \\ j\ne {\ell}}}^N \left( v- \mu_j \right)^2}\middle/{\displaystyle\prod_{\substack{j=1 \\ j\ne {\ell}}}^{N}
   \left( \mu_{\ell} - \mu_j \right)^2} \right].
\end{equation}

Finally, the dot product between the vector $\vec{b} \in \reals^{2N}$ defined by
\eqref{eqn:lemma_bvec} and the following vector:
\begin{equation}
\vec\rvec(s) := \Bigl[ 1, \, s, \, s^2, \, \ldots, \, s^{2N-1} \Bigr]^T, \qquad s \in \reals,
\end{equation}
can be written as
\begin{equation}
\label{eqn:lemma_bvec_dotprod}
\vec{b} \cdot \vec\rvec(s) = Q_{2N-1}(s) =
\frac{\left(s-\mu_{\ell}\right)}{\omega_{\ell}} \left[ {\displaystyle\prod_{\substack{j=1 \\ j\ne {\ell}}}^N \left( s- \mu_j \right)^2}\middle/{\displaystyle\prod_{\substack{j=1 \\ j\ne {\ell}}}^{N}
   \left( \mu_{\ell} - \mu_j \right)^2} \right].
\end{equation}
\end{lemma}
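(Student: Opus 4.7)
The first identity in \eqref{eqn:lemma_bvec} is essentially a restatement of \cref{lemma:gradient1}. The plan is to first apply \cref{lemma:gradient1} with $f(\vec\omega,\vec\mu) = \mu_\ell$. Since $\mu_\ell$ depends only on the $(N+\ell)$th component of the combined parameter vector $(\vec\omega,\vec\mu)$, the partial derivative vector on the right-hand side reduces to the standard basis vector $\vec{e}_{N+\ell}$, giving $\nabla_{\vec{\mom}}\mu_\ell = \mat{B}^{-1}\vec{e}_{N+\ell}$ as claimed.

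Next I would unpack the linear system $\mat{B}\vec{b} = \vec{e}_{N+\ell}$ row-by-row, using the explicit form of $\mat{B}$ given in \eqref{eqn:moment_gradient4}. The first $N$ rows yield $\sum_{j=0}^{2N-1} b_j \mu_i^j = 0$ for $i=1,\dots,N$, which is exactly $Q_{2N-1}(\mu_i)=0$. The next $N$ rows give $\omega_i \sum_{j=1}^{2N-1} j\, b_j \mu_i^{j-1} = \delta_i^\ell$, that is, $\omega_i Q'_{2N-1}(\mu_i)=\delta_i^\ell$. Since the $\omega_j$ are assumed positive (in particular nonzero, else the quadrature point would be meaningless), dividing yields precisely the interpolation conditions \eqref{eqn:lemma_qpoly2}.

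To obtain the explicit form \eqref{eqn:lemma_qpoly3}, I would argue by structure and uniqueness. The conditions \eqref{eqn:lemma_qpoly2} force $Q_{2N-1}$ to vanish at $\mu_j$ for every $j\ne\ell$ with derivative also vanishing there, so each such $\mu_j$ is at least a double root; additionally $\mu_\ell$ is a simple root. This accounts for $2(N-1)+1 = 2N-1$ roots (counted with multiplicity) in a polynomial of degree at most $2N-1$, so
\[
Q_{2N-1}(v) = C\,(v-\mu_\ell)\prod_{\substack{j=1\\ j\ne\ell}}^{N}(v-\mu_j)^2
\]
for some constant $C$. Imposing the remaining condition $Q'_{2N-1}(\mu_\ell) = 1/\omega_\ell$ determines $C = \bigl[\omega_\ell \prod_{j\ne\ell}(\mu_\ell-\mu_j)^2\bigr]^{-1}$, which reproduces \eqref{eqn:lemma_qpoly3}. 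Uniqueness of the polynomial (hence of $\vec b$) follows because the distinctness of the $\mu_j$'s makes $\mat{B}$ a confluent Vandermonde-type matrix, which is nonsingular.

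Finally, the dot-product identity \eqref{eqn:lemma_bvec_dotprod} is immediate: by definition $\vec b\cdot\vec\rvec(s) = \sum_{j=0}^{2N-1} b_j s^j = Q_{2N-1}(s)$, so substituting the explicit formula \eqref{eqn:lemma_qpoly3} completes the proof. The one subtle point — and the only place where real care is needed — is verifying that the $N$ zero-value and $N$ derivative conditions together uniquely pin down $Q_{2N-1}$; but this is just the classical Hermite interpolation fact, guaranteed because the nodes $\mu_j$ are distinct, and it is already implicit in \cref{lemma:hermite}.
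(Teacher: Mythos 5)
Your proof is correct and follows essentially the same route as the paper's: the first identity comes from Lemma \ref{lemma:gradient1} with $\nabla_{(\vec{\omega},\vec{\mu})}\mu_{\ell}=\vec{e_{N+\ell}}$, the Hermite conditions \eqref{eqn:lemma_qpoly2} come from reading off the rows of $\mat{B}\vec{b}=\vec{e_{N+\ell}}$, the explicit form \eqref{eqn:lemma_qpoly3} is the unique Hermite interpolant (the paper cites the Lagrange form where you re-derive it by root counting, which is equivalent), and the dot product is a definition chase. Your version simply supplies more of the details that the paper leaves implicit; there is no gap.
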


\begin{proof}
Equation \eqref{eqn:lemma_bvec} follows directly from definitions
\eqref{eqn:moment_gradient1}--\eqref{eqn:moment_gradient4}.
Polynomial \eqref{eqn:lemma_qpoly1} with Hermite interpolation conditions
\eqref{eqn:lemma_qpoly2} follows from an argument similar to the one provided in 
Lemma \eqref{lemma:hermite}. Equation \eqref{eqn:lemma_qpoly3} follows from invoking the
Lagrange form of the interpolating polynomial that satisfies conditions
\eqref{eqn:lemma_qpoly2}.

Finally, dot product \eqref{eqn:lemma_bvec_dotprod} follows from
the simple observation that
\begin{align*}
& Q_{2N-1}(sv) = \sum_{{j}=0}^{2N-1} b_{{j}} s^{j} v^{j}
= b_0 + \left( b_1 s \right) v 
+ \cdots + \left( b_{2N-1} s^{2N-1} \right) v^{2N-1} \\
\Longrightarrow \quad & 
Q_{2N-1}(s) = \sum_{{j}=0}^{2N-1} b_{{j}} \rvec_{j}(s) = \vec{b} \cdot \vec\rvec(s),
\end{align*}
which when combined with \eqref{eqn:lemma_qpoly3} gives the desired result.
\qedsymbol
\end{proof}

\begin{theorem}[Weak hyperbolicity and linear degeneracy of QMOM]
\label{theorem:weakhyp}
The classical quadrature-based moment (QMOM) closure for a fixed $N \in {\mathbb N}_{\ge 1}$, denoted 
by \cref{eqn:delta}, leads to a system of partial
differential equations that has the following quasilinear form:
\begin{equation}
\label{eqn:qmom_theorem_eqn1}
\vec{q}_{,t} + \mat{A}\left(\vec{q} \right) \, \vec{q}_{,x}, \qquad \text{where} \quad
\vec{q} = \begin{bmatrix} \mom_0, \, \mom_1, \, \cdots,  \, \mom_{2N-2}, \, \mom_{2N-1} \end{bmatrix},
\end{equation}
where the flux Jacobian matrix is given by
\begin{equation}
\label{eqn:qmom_theorem_eqn2}
\mat{A}\left(\vec{q} \right) = \begin{bmatrix}[1.5]
 0&1  &      &        &   \\
 &   0&1     &        &   \\
 &   &\ddots  &\ddots & \\
 &   &      &        0& 1 \\
 \frac{\partial \mom^{\star}_{2N}}{\partial \mom_0} & \frac{\partial \mom^{\star}_{2N}}{\partial \mom_1}  & 
 \cdots & \frac{\partial \mom^{\star}_{2N}}{\partial \mom^{\star}_{2N-2}} & \frac{\partial \mom^{\star}_{2N}}{\partial \mom_{2N-1}}
 \end{bmatrix},
\end{equation}
where $\mom^{\star}_{2N}$ is given by \eqref{eqn:qmom_missing_moment}.
System \eqref{eqn:qmom_theorem_eqn1} and \eqref{eqn:qmom_theorem_eqn2}
 is weakly hyperbolic for any integer $N\ge 1$ with the following properties:
\begin{enumerate}
\item The eigenvalues of \eqref{eqn:qmom_theorem_eqn2} 
are $\lambda_{\ell} = \mu_{\ell}$ for ${\ell}=1,2,3,\ldots,N$, where $\mu_{\ell}$ are the abscissas in \eqref{eqn:delta};
\item Every eigenvalue has algebraic multiplicity exactly two;
\item Every eigenvalue has geometric multiplicity exactly one; and
\item Every wave in the system is linearly degenerate: $\frac{\partial \lambda_{\ell}}{\partial \vec{q}}  \cdot \vec{\rvec^{\ell}} = 0$ for ${\ell}=1,2,3,\ldots,N$, where $\left(\lambda_{\ell}, \vec{\rvec^{\ell}}\right)$ is an eigenvalue-eigenvector pair of flux Jacobian \eqref{eqn:qmom_theorem_eqn2}.
\end{enumerate}
\end{theorem}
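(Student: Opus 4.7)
The plan is to exploit the companion matrix structure of $\mat{A}$ in \eqref{eqn:qmom_theorem_eqn2} and tie the entries of its last row to a Hermite interpolation problem. First I would compute the characteristic polynomial by cofactor expansion along the last column (or row), which for a matrix of this form is well known to give
\[
\det\left( \lambda \, \mat{ \, \, {\mathbb I} \, \, } - \mat{A} \right) = \lambda^{2N} - \sum_{\ell=0}^{2N-1} c_\ell \, \lambda^\ell, \qquad c_\ell := \frac{\partial \mom^\star_{2N}}{\partial \mom_\ell}.
\]

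Next I would identify the coefficients $c_\ell$. Since $\mom^\star_{2N} = \sum_{j=1}^N \omega_j \mu_j^{2N}$, direct differentiation yields $\partial \mom^\star_{2N}/\partial \omega_j = \mu_j^{2N}$ and $\partial \mom^\star_{2N}/\partial \mu_j = 2N\omega_j \mu_j^{2N-1}$. Applying the chain rule via \cref{lemma:gradient1}, the vector $(c_0,\ldots,c_{2N-1})^T = \mat{B}^{-1}(\mu_1^{2N},\ldots,\mu_N^{2N},2N\omega_1\mu_1^{2N-1},\ldots,2N\omega_N\mu_N^{2N-1})^T$ is exactly the vector of polynomial coefficients produced by the Hermite interpolation problem in \cref{lemma:hermite}. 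Therefore $P_{2N-1}(v) := \sum_{\ell=0}^{2N-1} c_\ell v^\ell$ is the unique polynomial of degree at most $2N-1$ that matches both $v^{2N}$ and its derivative $2Nv^{2N-1}$ at each node $\mu_j$. Consequently, the monic polynomial of degree $2N$ given by $v^{2N} - P_{2N-1}(v)$ has each $\mu_j$ as a double root, so
\[
\det\left( \lambda \, \mat{ \, \, {\mathbb I} \, \, } - \mat{A} \right) = \lambda^{2N} - P_{2N-1}(\lambda) = \prod_{j=1}^N \left(\lambda - \mu_j\right)^2.
\]
This simultaneously establishes claims (1) and (2): the eigenvalues are $\mu_1,\ldots,\mu_N$, each with algebraic multiplicity two.

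For claim (3), I would exploit the companion structure directly. For each eigenvalue $\lambda=\mu_\ell$, the first $2N-1$ rows of $\mu_\ell\, \mat{ \, \, {\mathbb I} \, \, } - \mat{A}$ are manifestly linearly independent (they form a staircase of nonzero pivots), so $\mathrm{rank}(\mu_\ell \mat{ \, \, {\mathbb I} \, \, } - \mat{A}) \ge 2N-1$, giving geometric multiplicity at most one. A single eigenvector is then easily verified to be $\vec{\rvec}^\ell = (1, \mu_\ell, \mu_\ell^2, \ldots, \mu_\ell^{2N-1})^T$, since the first $2N-1$ component equations reduce to the shift identity and the last one amounts to $P_{2N-1}(\mu_\ell) = \mu_\ell^{2N}$, which is precisely the Hermite interpolation condition.

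Finally, for claim (4) I would apply \cref{lemma:gradient2} with $f = \mu_\ell$. That lemma gives
\[
\frac{\partial \lambda_\ell}{\partial \vec{q}} \cdot \vec{\rvec}^\ell = \nabla_{\vec{\mom}}\mu_\ell \cdot \vec{\rvec}(\mu_\ell) = Q_{2N-1}(\mu_\ell) = \frac{(\mu_\ell - \mu_\ell)}{\omega_\ell}\left[\prod_{\substack{j=1 \\ j \ne \ell}}^N\left(\mu_\ell - \mu_j\right)^2 \middle/ \prod_{\substack{j=1 \\ j \ne \ell}}^N\left(\mu_\ell - \mu_j\right)^2\right] = 0,
\]
where the vanishing is due to the factor $(s - \mu_\ell)$ in \eqref{eqn:lemma_qpoly3} evaluated at $s = \mu_\ell$, establishing linear degeneracy of every wave. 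The main obstacle is the bookkeeping in step two: recognizing that the moment-gradient chain rule turns the last row of $\mat{A}$ into exactly the coefficients of a Hermite interpolant; once that identification is made, the rest of the proof follows essentially without computation from \cref{lemma:hermite,lemma:gradient1,lemma:gradient2}.
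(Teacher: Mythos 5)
Your proposal is correct and follows essentially the same route as the paper's proof: it uses the companion-matrix structure to write the characteristic polynomial as $\lambda^{2N}-\sum_{\ell}\frac{\partial \mom^{\star}_{2N}}{\partial \mom_{\ell}}\lambda^{\ell}$, identifies the last row of $\mat{A}$ with the Hermite interpolant coefficients via \cref{lemma:hermite,lemma:gradient1}, factors the result as $\prod_{j}(\lambda-\mu_j)^2$, exhibits the single eigenvector $(1,\mu_{\ell},\ldots,\mu_{\ell}^{2N-1})^T$, and concludes linear degeneracy from the factor $(s-\mu_{\ell})$ in \cref{lemma:gradient2}. The only cosmetic differences are that you argue the double-root factorization directly from the interpolation conditions (where the paper cites a classical Hermite error formula) and evaluate $Q_{2N-1}$ at $\mu_{\ell}$ rather than taking a limit; both are valid.
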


\begin{proof}
The key to understanding the eigenvalues of flux Jacobian \eqref{eqn:qmom_theorem_eqn2}
is to understand the last row. To this end, consider:
\begin{equation}
\label{eqn:proof_start}
\mom^{\star}_{2N} = \sum_{{j}=1}^N \omega_{j} \mu_{j}^{2N} \quad \Longrightarrow \quad
\frac{\partial \mom^{\star}_{2N}}{\partial \mom_{\ell}} = \sum_{{j}=1}^N \left[ \mu_{j}^{2N} \frac{\partial \omega_{j}}{\partial \mom_{\ell}} 
+ 2N \omega_{j} \mu_{j}^{2N-1} \frac{\partial \mu_{j}}{\partial \mom_{\ell}} \right].
\end{equation}
To make sense of this we need to obtain expressions for the partial derivatives of the
quadrature weights and abscissas with respect to the moments. To this end, we compute
the related quantities:
\begin{gather}
\label{eqn:proof1}
\mom_{s} = \sum_{{j}=1}^N \omega_{j} \mu_{j}^{s} \quad \Longrightarrow \quad
\frac{\partial \mom_{s}}{\partial \mom_{\ell}} = \sum_{{j}=1}^N \left[ \mu_{j}^{s} \frac{\partial \omega_{j}}{\partial \mom_{\ell}} 
+ s \omega_{j} \mu_{j}^{s-1} \frac{\partial \mu_{j}}{\partial \mom_{\ell}} \right]
= \delta^s_{\ell},
\end{gather}
where $s,\ell = 0,1,\ldots, 2N-1$ and $\delta^s_{\ell}$ is the Kronecker delta, which
arises due to the fact that $\mom_{\ell}$ and $\mom_{s}$ are independent variables if $s\ne \ell$.
The expression in \eqref{eqn:proof1} can be written in matrix form to obtain the
following result:
\begin{gather}
\begin{bmatrix}[1.5]
\frac{\partial \omega_1}{\partial \mom_{0}} &   \cdots &  \frac{\partial \omega_1}{\partial \mom_{2N-1}} \\
\vdots  & & \vdots \\
\frac{\partial \omega_N}{\partial \mom_{0}} &  \cdots &  \frac{\partial \omega_N}{\partial \mom_{2N-1}} \\
\omega_1 \frac{\partial \mu_1}{\partial \mom_{0}} &  \cdots &  \omega_1 \frac{\partial \mu_1}{\partial \mom_{2N-1}} \\
\vdots  & & \vdots \\
\omega_N \frac{\partial \mu_N}{\partial \mom_{0}} & \cdots &  \omega_N \frac{\partial \mu_N}{\partial \mom_{2N-1}}
\end{bmatrix}^T =
\begin{bmatrix}[1.5]
1 & \mu_1 & \mu_1^2 & \cdots & \mu_1^{2N-1} \\
\vdots & \vdots & \vdots & & \vdots \\
1 & \mu_N & \mu_N^2  & \cdots & \mu_N^{2N-1} \\
0 & 1 & 2 \mu_1  & \cdots & (2N-1) \mu_1^{2N-2} \\
\vdots & \vdots & \vdots & & \vdots \\
0 & 1 & 2 \mu_N  & \cdots & (2N-1) \mu_N^{2N-2}
\end{bmatrix}^{-1}.
\end{gather}
Using this result in \eqref{eqn:proof_start} produces expressions for the 
last row of flux Jacobian \eqref{eqn:qmom_theorem_eqn2}:
\begin{gather}
\begin{bmatrix}[1.5]
\frac{\partial \mom^{\star}_{2N}}{\partial \mom_{0}} \\  \frac{\partial \mom^{\star}_{2N}}{\partial \mom_{1}} \\  \frac{\partial \mom^{\star}_{2N}}{\partial \mom_{2}} \\ \vdots \\ \frac{\partial \mom^{\star}_{2N}}{\partial \mom_{2N-2}} \\ \frac{\partial \mom^{\star}_{2N}}{\partial \mom_{2N-1}}
\end{bmatrix} = 
\begin{bmatrix}[1.5]
1 & \mu_1 &  \cdots & \mu_1^{2N-1} \\
\vdots &  \vdots & & \vdots \\
1 & \mu_N & \cdots & \mu_N^{2N-1} \\
0 & 1 & \cdots & (2N-1) \mu_1^{2N-2} \\
\vdots &  \vdots & & \vdots \\
0 & 1 &  \cdots & (2N-1) \mu_N^{2N-2}
\end{bmatrix}^{-1}
\begin{bmatrix}[1.5]
\mu_1^{2N} \\  \vdots \\ \mu_N^{2N} \\ 2N \mu_1^{2N-1} \\  \vdots \\ 2N\mu_N^{2N-1}
\end{bmatrix} = \begin{bmatrix}[1.5]
a_0 \\  \vdots \\ a_{N-1} \\ a_{N} \\ \vdots \\ a_{2N-1}
\end{bmatrix},
\end{gather}
where the last equality follows from Lemma \ref{lemma:hermite} and $a_j$ for $j=0,1,\ldots,2N-1$ are
the coefficients of the Hermite interpolating polynomial defined through
\eqref{eqn:hermite_qmom1} and \eqref{eqn:hermite_qmom2}.

Next we attempt to directly compute the eigenvalues of the flux Jacobian:
\begin{equation}
\left| \mat{A} - v  \mat{\mathbb I} \right| = \begin{vmatrix}[1.5] -v & \quad 1 \\ & \quad \ddots & 
\quad \ddots \\
& &  -v & 1 \\
a_0 & \quad \cdots & \quad a_{2N-2} & \quad \left(a_{2N-1} - v \right)
\end{vmatrix} = v^{2N} - \sum_{{j}=0}^{2N-1} a_{j} v^{{j}}.
\end{equation}
Using a classical result from Hermite polynomial interpolation, we can write
the right-most term in the above expression as follows 
(e.g., see Theorem 6.4 on page 190 of S\"uli and Mayer \cite{book:suli2003}):
\begin{equation}
\left| \mat{A} - v  \mat{\mathbb I} \right| = \left( v - \mu_1 \right)^2 \left( v - \mu_2 \right)^2 \cdots
\left( v - \mu_N \right)^2.
\end{equation}
This proves the first two claims of the theorem: (1) the eigenvalues are the 
quadrature abscissas, and (2) each eigenvalue has algebraic multiplicity exactly two.

Next we look at the eigenvectors. For example, the ${\ell}^{\text{th}}$ eigenvector
for each ${\ell}=1,2,\ldots,N$ satisfies the relationship:
\begin{equation}
\left( \mat{A} - \mu_{\ell} \, \mat{\mathbb I} \right) \vec{\rvec^{\ell}} = \vec{0}.
\end{equation}
By inspection, we see that $\vec{\rvec^{\ell}} \not\equiv \vec{0}$ if and only if 
the first component of $\vec{\rvec^{\ell}}$ is not zero. Without loss of generality the
first component is taken to be unity, and then by inspection we note that
the only eigenvector associated to eigenvalue $v = \mu_{\ell}$ must be
\begin{equation}
\vec{\rvec^{\ell}} = \left( 1, \, \mu_{\ell}, \, \mu_{\ell}^2, \, \ldots, \, \mu_{\ell}^{2N-1} \right)^T.
\end{equation}
This proves the third claim of the theorem: (3) 
each eigenvalue has geometric multiplicity exactly one. Since the geometric multiplicity for
each eigenvalue is strictly less than the algebraic multiplicity, system \eqref{eqn:qmom_theorem_eqn1} and \eqref{eqn:qmom_theorem_eqn2} is weakly hyperbolic for any integer $N\ge 1$.

The final claim in the theorem is that each wave is linearly degenerate. Proving this requires
us to investigate the following dot product
\begin{equation}
\frac{\partial  \mu_{\ell}}{\partial \vec{q}}  \cdot  \vec{\rvec^{\ell}}.
\end{equation}
Invoking Lemmas \eqref{lemma:gradient1} and \eqref{lemma:gradient2} shows that
for every ${\ell}=1,2,\ldots,N$:
\begin{equation}
\frac{\partial  \mu_{\ell}}{\partial \vec{q}}  \cdot  \vec{\rvec^{\ell}} = 
\lim_{s \rightarrow \mu_{\ell}} \left\{ \frac{\left(s-\mu_{\ell}\right)}{\omega_{\ell}} \left[ {\displaystyle\prod_{\substack{j=1 \\ j\ne {\ell}}}^N \left( s- \mu_j \right)^2}\middle/{\displaystyle\prod_{\substack{j=1 \\ j\ne {\ell}}}^{N}
   \left( \mu_{\ell} - \mu_j \right)^2} \right] \right\} = 0,
\end{equation}
which proves that every wave for ${\ell}=1,2,\ldots,N$ is linearly degenerate.
\qedsymbol
\end{proof}

\begin{lemma}[Convexity property]
\label{lemma:convexity}
Let ${\mathcal C}_{\alpha}\left(\vec{Q}\right): \reals^5 \mapsto \reals$ be a convex function. Then
for all $a,b \in \reals$ such that
\begin{equation}
	a \ge 0, \quad b\ge 0, \quad 1-a-b \ge 0,
\end{equation}
the following inequality holds:
\begin{equation}
\label{eqn:appendix_convexity}
{\mathcal C}_{\alpha}\Bigl((1-a-b) \vec{P} + a \vec{Q} + b \vec{R} \Bigr) \le 
(1-a-b) {\mathcal C}_{\alpha}\Bigl( \vec{P} \Bigr) + a {\mathcal C}_{\alpha}\Bigl(\vec{Q} \Bigr)
+ b {\mathcal C}_{\alpha}\Bigl(\vec{R} \Bigr) \quad \forall \vec{P}, \vec{Q}, \vec{R} \in \reals^5.
\end{equation}
\end{lemma}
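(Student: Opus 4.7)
The plan is to prove the three-point convexity inequality \eqref{eqn:appendix_convexity} by iterating the standard two-point definition of convexity. Recall that convexity of ${\mathcal C}_\alpha$ means that for any $\vec{X}, \vec{Y} \in \reals^5$ and any $\theta \in [0,1]$ we have
\[
{\mathcal C}_\alpha\bigl((1-\theta)\vec{X} + \theta \vec{Y}\bigr) \le (1-\theta) {\mathcal C}_\alpha(\vec{X}) + \theta {\mathcal C}_\alpha(\vec{Y}).
\]
Starting from this, I will reduce the three-point combination to a nested pair of two-point combinations and apply the inequality twice.

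First I would dispose of the degenerate cases. If $a=b=0$, then the statement reduces to the trivial identity ${\mathcal C}_\alpha(\vec{P}) \le {\mathcal C}_\alpha(\vec{P})$. If $1-a-b=0$, then the inequality becomes the standard two-point convexity applied to $\vec{Q}$ and $\vec{R}$ with weights $a$ and $b=1-a$. So the only case requiring work is $0 < a+b < 1$ with both $a \ge 0$ and $b \ge 0$ (and at least one strictly positive).

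In this generic case, I would write the three-point combination as the outer two-point combination
\[
(1-a-b)\vec{P} + a\vec{Q} + b\vec{R} = (1-\theta)\vec{P} + \theta \vec{Z}, \qquad
\theta := a+b, \quad
\vec{Z} := \frac{a}{a+b}\vec{Q} + \frac{b}{a+b}\vec{R}.
\]
Since $\theta \in (0,1]$ and the inner weights $a/(a+b), b/(a+b) \in [0,1]$ sum to unity, both combinations are valid two-point convex combinations. Applying convexity to the outer combination gives
\[
{\mathcal C}_\alpha\bigl((1-\theta)\vec{P} + \theta\vec{Z}\bigr) \le (1-\theta){\mathcal C}_\alpha(\vec{P}) + \theta {\mathcal C}_\alpha(\vec{Z}),
\]
and applying it to the inner combination gives
\[
{\mathcal C}_\alpha(\vec{Z}) \le \frac{a}{a+b}{\mathcal C}_\alpha(\vec{Q}) + \frac{b}{a+b}{\mathcal C}_\alpha(\vec{R}).
\]
Substituting the second inequality into the first and simplifying using $\theta = a+b$ yields \eqref{eqn:appendix_convexity}, completing the proof.

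There is no real obstacle here: the result is essentially a two-term instance of Jensen's inequality for convex functions and follows from the definition of convexity alone. The only care required is to verify that the inner weights $a/(a+b)$ and $b/(a+b)$ are well-defined, which is why the case $a+b = 0$ is handled separately at the outset.
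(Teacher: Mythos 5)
Your proof is correct and follows essentially the same route as the paper: introduce the intermediate point $\vec{S} = \frac{a}{a+b}\vec{Q} + \frac{b}{a+b}\vec{R}$, apply the two-point convexity definition with $\theta = a+b$, and then apply it again to the inner combination. If anything, you are slightly more careful than the paper, which does not explicitly separate out the degenerate case $a+b=0$ where the inner weights are undefined.
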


\begin{proof}
By definition, the function ${\mathcal C}_{\alpha}$ is convex {\it if and only if} the following is true for all $\theta \in [0,1]$:
\begin{equation}
\label{eqn:convexity_defn}
{\mathcal C}_{\alpha}\Bigl( \bigl(1-\theta \bigr) \vec{P} + \theta \vec{Q} \Bigr) \le
\bigl(1-\theta \bigr) \, {\mathcal C}_{\alpha}\Bigl( \vec{P} \Bigr) + \theta \, {\mathcal C}_{\alpha}\Bigl( \vec{Q} \Bigr)
\quad \forall \vec{P}, \vec{Q} \in \reals^5.
\end{equation}
Consider the convex function applied to a sum of three vectors of the following form:
\begin{equation}
{\mathcal C}_{\alpha}\Bigl((1-a-b) \vec{P} + a \vec{Q} + b \vec{R} \Bigr) \quad \text{where} \quad a,b,(1-a-b) \ge 0.
\end{equation}
We can temporarily define the following vector:
\begin{equation}
	\vec{S} := \biggl(\frac{a}{a+b} \biggr) \, \vec{Q} + \biggl(\frac{b}{a+b} \biggr) \, \vec{R} \quad \Longrightarrow \quad 
	  a \vec{Q} + b \vec{R} = (a+b) \vec{S},
\end{equation}
such that
\begin{equation}
{\mathcal C}_{\alpha}\Bigl((1-a-b) \vec{P} + a \vec{Q} + b \vec{R} \Bigr) = 
  {\mathcal C}_{\alpha}\Bigl((1-a-b) \vec{P} + (a+b) \vec{S} \Bigr).
\end{equation}
Invoking the convexity definition \eqref{eqn:convexity_defn} with $\theta = a+b$, which by assumption satisfies
$\theta \in [0,1]$, we have that
\begin{equation}
\label{eqn:appendix_step1}
{\mathcal C}_{\alpha}\Bigl((1-a-b) \vec{P} + a \vec{Q} + b \vec{R} \Bigr) \le 
(1-a-b) {\mathcal C}_{\alpha}\Bigl( \vec{P} \Bigr) + (a+b) {\mathcal C}_{\alpha}\Biggl( \biggl(\frac{a}{a+b}\biggr) \vec{Q} 
+ \biggl( \frac{b}{a+b} \biggr) \vec{R} \Biggr).
\end{equation}
We then again invoke definition \eqref{eqn:convexity_defn}, this time with $\theta= \frac{b}{a+b}$, which also satisfies
$\theta \in [0,1]$, to get that
\begin{equation}
\label{eqn:appendix_step2}
{\mathcal C}_{\alpha}\Biggl( \biggl(\frac{a}{a+b}\biggr) \vec{Q} 
+ \biggl( \frac{b}{a+b} \biggr) \vec{R} \Biggr) \le \biggl(\frac{a}{a+b}\biggr) {\mathcal C}_{\alpha}\Bigl(\vec{Q} \Bigr)
+ \biggl( \frac{b}{a+b} \biggr) {\mathcal C}_{\alpha}\Bigl(\vec{R} \Bigr).
\end{equation}
Combining the last two inequalities, \eqref{eqn:appendix_step1} and \eqref{eqn:appendix_step2}, results in desired inequality: \eqref{eqn:appendix_convexity}.
\qedsymbol
\end{proof}

\section*{Acknowledgments}
This research was partially funded by NSF Grants DMS--1620128
and DMS--2012699.

\section*{Statements and Declarations}
\begin{description}
\item[{\bf Funding.}] This research was funded by Iowa State University in Ames, Iowa, USA and US National Science Foundation Grants DMS--1620128
and DMS--2012699.

\medskip

\item[{\bf Competing interests.}] The authors have no 
conflicts of interest to disclose.

\medskip

\item[{\bf Data availability statement.}] Data sharing does not apply to this article as no datasets were generated or analyzed during the current study.
\end{description}

\bibliographystyle{spmpsci}

\end{document}